\newtheorem{definition}{Definition}[section]
\newtheorem{lemma}[definition]{Lemma}
\newtheorem{remark}[definition]{Remark}
\newtheorem{note}[definition]{Note}
\newtheorem{proposition}[definition]{Proposition}
\begin{document}
\title{\bf 
The 
alternating central extension \\
 of the Onsager Lie algebra}
\author{
Paul Terwilliger 
}
\date{}

\maketitle
\begin{abstract}
The Onsager Lie algebra $O$ is often used to study integrable lattice models. The universal enveloping algebra of $O$
admits a $q$-deformation $O_q$ called the 
$q$-Onsager algebra. Recently, an algebra $\mathcal O_q$ was introduced called the alternating central extension of $O_q$.
In this paper we introduce a Lie algebra $\mathcal O$ that is roughly described by the following two analogies: (i) $\mathcal O$ is to $O$ as $\mathcal O_q$ is to $O_q$;
(ii) $O_q$ is to $O$ as $\mathcal O_q$ is to $\mathcal O$. We call $\mathcal O$ the alternating central extension of $O$. This paper
contains a comprehensive description of $\mathcal O$.
\bigskip

\noindent
{\bf Keywords}. Onsager algebra; Dolan/Grady relations; $q$-Onsager algebra; $q$-Dolan/Grady relations.
\hfil\break
\noindent {\bf 2020 Mathematics Subject Classification}. 
Primary: 17B37. Secondary: 05E14, 81R50.

 \end{abstract}
 
 \section{Introduction} 
 
In the seminal 1944 paper 
\cite{Onsager},
 Lars Onsager obtained the free energy of
the two-dimensional Ising model in a zero magnetic field. In that paper Onsager introduced 
 an infinite-dimensional Lie algebra, now called the Onsager Lie algebra.
We will denote the Onsager Lie algebra by $O$. 
\medskip

\noindent There are various ways to describe $O$. For instance,
Onsager defined $O$ by giving a linear basis and
the action of the Lie bracket on the basis; see Lemma \ref{def:ons}
below.
 In
\cite{perk},
Perk gave
a presentation of $O$ by generators and relations. This 
presentation involves two generators and two relations, called the
Dolan/Grady relations
\cite{Dolgra}; see Lemma  \ref{def:OA} below and
\cite[Remark~9.1]{madrid}.  In \cite{roan}, Roan embedded $O$ into the loop algebra  $L(\mathfrak{sl}_2)$. From this point of view, $O$ consists
of the elements in $L(\mathfrak{sl}_2)$ that are fixed by a certain involution; see Definition  \ref{def:onsager}
below.
\medskip

\noindent The Lie algebra $O$ has been investigated many times in connection with integrable lattice models; see  \cite{ly, Miao, shibata, vernier} and the references therein.
It is shown in \cite{bbc2018} that 
 $O$ admits a presentation \`a la Faddeev-Reshetkhin-Taktadjan (FRT). This presentation is used in
\cite[Definition~4.1 and Theorem~2]{BC17} to obtain
 an attractive basis for $O$; see Lemma \ref{lem:OA2} below.
The finite-dimensional irreducible $O$-modules are classified up to isomorphism \cite{Davfirst, Da, dRC}; see \cite[Section~1]{Ha} for a summary.
In  \cite{neher}, the finite-dimensional irreducible $O$-modules are described from the viewpoint of equivariant map algebras.
The closed ideals of $O$ are obtained in \cite{dateRoan, dRC}, and the ideals of $O$ are obtained in \cite{E}. 
In \cite{HT} the tetrahedron Lie algebra $\boxtimes$ is introduced, and found to be a direct sum of three Lie subalgebras, each isomorphic to $O$. In \cite{Ha, 3pt} the representation theories of $O$ and $\boxtimes$ are compared.
 A comprehensive summary of $O$ can be found in \cite{chaar}.
\medskip

\noindent In the present paper we introduce a Lie algebra $\mathcal O$, called the alternating central extension of $O$. Our motivation
for $\mathcal O$ is a bit intricate, and involves four other algebras:  the $q$-Onsager algebra $O_q$, its alternating central extension $\mathcal O_q$, 
the positive part $U^+_q$ of the quantized enveloping algebra $U_q(\widehat{\mathfrak{sl}}_2)$,  and its alternating central extension $\mathcal U^+_q$.
Our motivation for $\mathcal O$ is explained in the paragraphs below.
\medskip

\noindent We mentioned that $O$ has a presentation involving two generators and  the Dolan/Grady relations.
Via this presentation,
the universal enveloping algebra of $O$
admits a $q$-deformation $O_q$ called the 
$q$-Onsager algebra
 \cite{bas1},
\cite{qSerre}. The algebra $O_q$ is associative and 
infinite-dimensional.
 It is defined by two generators and two relations
 called the $q$-Dolan/Grady relations; see Definition \ref{def:U}
 below.
The $q$-Dolan/Grady relations first appeared in algebraic combinatorics,
in the study of $Q$-polynomial distance-regular graphs 
\cite[Lemma~5.4]{tersub3}. Shortly thereafter they appeared in mathematical physics
\cite[Section~2]{bas1}.
Up to the present, 
$O_q$ remains an active  research topic in
mathematics 
\cite{BK, ito, ItoTerAug, LRW, qSerre,
madrid,
uaw,
lusztigaut,
pospart,
pbw,
z2z2z2,
diagram,
conj,
pbwqO,
compQons}
and physics
\cite{bas2,
bas1, 
BK05,
bas4,
basKoi,
bas8,
basXXZ,
nonabel2017,
basBel,
basnc,
BVu
}.
\medskip

\noindent 
\noindent
 In \cite{BK05} Baseilhac and Koizumi introduced a current algebra  for $O_q$, in order to solve boundary integrable systems with hidden symmetries. We will denote this current algebra by $\mathcal O_q$.
In \cite[Definition~3.1]{basnc} Baseilhac and Shigechi gave a presentation of $\mathcal O_q$ by generators and relations. The generators are denoted
\begin{align}
 \lbrace \mathcal W_{-k}\rbrace_{k\in \mathbb N}, \qquad \lbrace \mathcal W_{k+1}\rbrace_{k\in \mathbb N}, \qquad  
 \lbrace \mathcal G_{k+1} \rbrace_{k\in \mathbb N},
\qquad
\lbrace  \mathcal {\tilde G}_{k+1} \rbrace_{k\in \mathbb N}
\label{eq:aalt}
\end{align}
and the relations are given in 
\eqref{eq:3p1a}--\eqref{eq:3p11a} below.
These relations are a bit complicated, and it is not transparently clear how $O_q$,  $\mathcal O_q$ are related.
Some conjectures were made in
 \cite[Conjecture~2]{basBel}, 
 \cite[Conjectures~4.6,~4.8]{z2z2z2} and eventually resolved  \cite[Section~1]{pbwqO}, \cite[Theorem~1.1]{compQons}.
  \medskip
 
 \noindent 
Hoping to shed light on how  $O_q$, $\mathcal O_q$ are related, in \cite{alternating, altCE}
we considered an analogous but simpler situation, in which $O_q$, $\mathcal O_q$  are replaced by $U^+_q$, $\mathcal U^+_q$.
The algebra $U^+_q$ is defined by
two generators $A$, $B$ and two relations called the $q$-Serre relations;
see \cite[Definition~2.2]{alternating}. The algebra $\mathcal U^+_q$ will be described shortly.
In \cite[Section~5]{alternating} 
we introduced the alternating generators for $U^+_q$.
There are four types of
alternating generators,
denoted
\begin{align*}
\lbrace  W_{-k}\rbrace_{k\in \mathbb N}, \quad
\lbrace  W_{k+1}\rbrace_{k\in \mathbb N}, \quad
\lbrace  G_{k+1}\rbrace_{k\in \mathbb N}, \quad
\lbrace {\tilde G}_{k+1}\rbrace_{k\in \mathbb N}.
\end{align*}

\noindent 
The alternating generators get their name in the following way.
Start with the free algebra $\mathbb V$ on two generators $x,y$.
The standard  basis for
$\mathbb V$ consists of the words in $x,y$.
In
\cite{rosso1, rosso} M. Rosso introduced
an algebra structure on $\mathbb V$, called a
$q$-shuffle algebra.
For $u,v\in \lbrace x,y\rbrace$ their
$q$-shuffle product is
$u\star v = uv+q^{\langle u,v\rangle }vu$, where
$\langle u,v\rangle =2$
(resp. $\langle u,v\rangle =-2$)
if $u=v$ (resp.
 $u\not=v$).
Rosso gave an injective algebra homomorphism 
from $U^+_q$ into the $q$-shuffle algebra
${\mathbb V}$, that sends $A\mapsto x$ and $B\mapsto y$.
By
\cite[Definition~5.2]{alternating}
the homomorphism sends
\begin{align*}
&W_0 \mapsto x, \qquad W_{-1} \mapsto xyx, \qquad W_{-2} \mapsto xyxyx, \qquad \ldots
\\
&W_1 \mapsto y, \qquad W_{2} \mapsto yxy, \qquad W_{3} \mapsto yxyxy, \qquad \ldots
\\
&G_{1} \mapsto yx, \qquad G_{2} \mapsto yxyx,  \qquad G_3 \mapsto yxyxyx, \qquad \ldots
\\
&\tilde G_{1} \mapsto xy, \qquad \tilde G_{2} \mapsto 
xyxy,\qquad \tilde G_3 \mapsto xyxyxy, \qquad \ldots
\end{align*}
In \cite{alternating} we used the above homomorphism
to obtain many relations involving the 
 alternating generators; see \cite[Propositions~5.7,~5.10]{alternating}.
These relations can be viewed as a limiting case of \eqref{eq:3p1a}--\eqref{eq:3p11a}, in which the parameter $\rho$ below \eqref{eq:3p11a}  goes to zero.
\medskip

 \noindent In \cite[Definition~3.1]{altCE}
we defined the algebra $\mathcal U^+_q$ by generators and relations in the following way. The generators, said to be alternating,
are in bijection with the alternating generators of $U^+_q$. The relations are the ones in \cite[Propositions~5.7, 5.10]{alternating}. By construction there
exists a surjective algebra homomorphism  $\mathcal U^+_q\to U^+_q$ that sends each alternating generator of $\mathcal U^+_q$
to the corresponding alternating generator of $U^+_q$. In \cite[Lemma~3.6, Theorem~5.17]{altCE} we adjusted this homomorphism to get an algebra isomorphism
$\mathcal U^+_q \to U^+_q \otimes \mathbb F \lbrack z_1, z_2, \ldots \rbrack$, where $\mathbb F$ is the ground field and
$\lbrace z_n\rbrace_{n=1}^\infty $ are mutually commuting indeterminates. We also showed in \cite[Theorem~10.2]{altCE} that the alternating generators  form a PBW basis
for $\mathcal U^+_q$. 
\medskip

\noindent Using the example of $U^+_q$, $\mathcal U^+_q$ as a guide, 
in \cite{pbwqO} we considered how $O_q$, $\mathcal O_q$ are related.  Employing
the Bergman diamond lemma, we were able to show in  \cite[Theorem~6.1]{pbwqO}
 that the generators \eqref{eq:aalt}
form a PBW basis for $\mathcal O_q$. We used this in \cite[Theorem~9.14]{pbwqO} to establish an algebra isomorphism $\mathcal O_q \to O_q \otimes \mathbb F \lbrack z_1, z_2, \ldots \rbrack$.
\medskip

\noindent We just described how $O_q$, $\mathcal O_q$ are related. We now return to the Lie algebras $O$, $\mathcal O$.
We can obtain $O$ from $O_q$ as a $q=1$ limit. As we will see, we can similarly obtain $\mathcal O$ from $\mathcal O_q$ as a $q=1$ limit. Changing our point of view,
we can say that $\mathcal O_q$ is a $q$-analog of $\mathcal O$, in the same way that $O_q$ is a $q$-analog of $O$.
 To our knowledge $\mathcal O$ has not been considered previously. 
 \medskip
 
 \noindent
 In this paper we give a comprehensive description of $\mathcal O$. Our results are summarized as follows.
 We construct an ideal $\mathcal L$ of the loop algebra $L(\mathfrak{gl}_2)$ that has codimension 1. 
 We define $\mathcal O$ to be the set of elements in $\mathcal L$ that are fixed by a certain involution.
 We display two bases for $\mathcal O$, and give the action of the Lie bracket on each basis.
 We give three presentations of $\mathcal O$ by generators and relations. We show that the center $\mathcal Z$ of $\mathcal O$ has infinite dimension, and we display two bases for $\mathcal Z$. 
  We obtain the direct sum $\mathcal O=O+\mathcal Z$. We display
an automorphism $\sigma$ of $\mathcal O$ and an antiautomorphism $\dagger$ of $\mathcal O$. We show how $\sigma$ and $\dagger$ act on the above bases for $\mathcal O$. We
show that $\sigma$ and $\dagger$ fix everything in $\mathcal Z$.
After a brief description of $O_q$ and $\mathcal O_q$, we explain the $q=1$ limit process that
allows us to recover $O$ from $O_q$ and  $\mathcal O$ from $\mathcal O_q$. At the end of the paper, we make many comparisons between $O$, $\mathcal O$ and $O_q$, $\mathcal O_q$.
\medskip

\noindent In condensed form, our main results are roughly summarized by the following two analogies: (i) $\mathcal O$ is to $O$ as $\mathcal O_q$ is to $O_q$;
(ii) $O_q$ is to $O$ as $\mathcal O_q$ is to $\mathcal O$. 
\medskip

\noindent This paper is organized as follows. In Section 2 we give some preliminaries, that are mainly about the loop algebras $L(\mathfrak{sl}_2)$ and $L(\mathfrak{gl}_2)$. In Section 3, 
we review some basic facts about $O$. In Section 4, we give our comprehensive description of $\mathcal O$. In Section 5, we describe  $O_q$, $\mathcal O_q$ and their relationship to $O$, $\mathcal O$.

 \section{Preliminaries} We now begin our formal argument. Throughout the paper, the following notational conventions are in effect. Recall the natural numbers $\mathbb N = \lbrace 0,1,2,\ldots \rbrace$ and
 integers $\mathbb Z = \lbrace 0, \pm 1, \pm 2, \ldots \rbrace$. Define $\mathbb N^+ = \mathbb N\backslash \lbrace 0 \rbrace$.
 Let $\mathbb F$ denote a field with characteristic zero. Every vector space and tensor product mentioned in this paper is over $\mathbb F$. We will be discussing associative algebras and Lie algebras.
  Every algebra with no Lie prefix mentioned in this paper, is understood to be associative and have a multiplicative identity. A subalgebra  has the same multiplicative identity as the parent algebra.
 Every algebra and Lie algebra mentioned in  this paper is over $\mathbb F$.
  \medskip
  
  \noindent We now introduce an algebra that will play a role in our discussion.
Let $t$ denote an indeterminate. Let $\mathbb F\lbrack t,t^{-1} \rbrack$ denote the algebra of Laurent polynomials in $t, t^{-1}$ that have all coefficients in $\mathbb F$.
The vector space $\mathbb F\lbrack t,t^{-1} \rbrack$ has a basis $\lbrace t^n \rbrace_{n \in \mathbb Z}$.  The algebra $\mathbb F\lbrack t, t^{-1} \rbrack$ has an automorphism
$\tau$ that sends $t\mapsto t^{-1}$.
 \begin{definition}\rm 
(See \cite[p.~299]{damiani}.)
Let $ \mathcal A$ denote an algebra. A {\it Poincar\'e-Birkhoff-Witt} (or {\it PBW}) basis for $\mathcal A$
consists of a subset $\Omega \subseteq \mathcal A$ and a linear order $<$ on $\Omega$
such that the following is a basis for the vector space $\mathcal A$:
\begin{align*}
a_1 a_2 \cdots a_n \qquad n \in \mathbb N, \qquad a_1, a_2, \ldots, a_n \in \Omega, \qquad
a_1 \leq a_2 \leq \cdots \leq a_n.
\end{align*}
We interpret the empty product as the multiplicative identity in $\mathcal A$.
\end{definition}
  \noindent Next, we have some general comments about Lie algebras. An introduction to this topic can be found in \cite{carter}.
  Let $\mathfrak{g}$ denote a Lie algebra with Lie bracket $\lbrack\,,\,\rbrack$. Recall that $\mathfrak{g}$ is abelian
  whenever $\lbrack u,v\rbrack=0$ for all $u,v \in \mathfrak{g}$.  An element $u \in \mathfrak{g}$ is called {\it central} whenever $\lbrack u,v\rbrack=0$ for all $v \in \mathfrak{g}$.
 The set of central elements in $\mathfrak{g}$ forms an ideal of $\mathfrak{g}$, called the {\it center} of $\mathfrak{g}$. The center of $\mathfrak{g} $ is abelian.
 Let ${\rm Mat}_2(\mathbb F)$ denote the algebra consisting  of the $2 \times 2$ matrices that have all entries in $\mathbb F$. Let $I$ denote the identity matrix in ${\rm Mat}_2(\mathbb F)$.
 Let $\mathfrak{gl}_2$ denote the Lie algebra  consisting of the vector space ${\rm Mat}_2(\mathbb F)$ and Lie bracket $\lbrack u,v \rbrack = uv-vu$.
 The center of $\mathfrak{gl}_2$ is equal to $\mathbb F I$.  View the vector space $\mathbb F$ as a Lie algebra with Lie bracket
$\lbrack u,v \rbrack=0$. The trace map ${\rm tr}: \mathfrak{gl}_2 \to \mathbb F$ is a Lie algebra homomorphism, with kernel denoted by
$\mathfrak{sl}_2$. The sum $\mathfrak{gl}_2 = \mathbb F I + \mathfrak{sl}_2$ is direct.
 The vector space $\mathfrak{sl}_2$ has a basis
 \begin{align*}
e = \begin{pmatrix} 0 & 1 \\ 0 & 0 \end{pmatrix},  \qquad  
f = \begin{pmatrix}0 & 0 \\ 1 & 0 \end{pmatrix}, \qquad
h = \begin{pmatrix}1 &\ \ 0 \\ 0 & -1 \end{pmatrix}
\end{align*}
and 
\begin{align*}
\lbrack h,e\rbrack = 2e, \qquad 
\lbrack h,f\rbrack = -2f, \qquad
\lbrack e,f \rbrack = h.
\end{align*}
For a Lie algebra $\mathfrak{g}$, let
$L(\mathfrak{g})$ denote the Lie algebra 
consisting of the vector space
$\mathfrak{g} \otimes \mathbb F \lbrack t,t^{-1}\rbrack$
and Lie bracket
\begin{eqnarray*}
\lbrack u\otimes \varphi, v \otimes \phi \rbrack = \lbrack u,v\rbrack \otimes \varphi \phi,
\qquad \qquad 
u,v \in \mathfrak{g},  
\qquad \varphi, \phi \in  
 \mathbb F \lbrack t, t^{-1}\rbrack.
\end{eqnarray*}
We call $L(\mathfrak{g})$ the {\it loop algebra} for $\mathfrak{g}$.
For $\mathfrak{g}=\mathfrak{gl}_2$ or $\mathfrak{g}=\mathfrak{sl}_2$, we sometimes view an element $x \in L(\mathfrak{g})$
as a $2 \times 2$ matrix that has entries in $\mathbb F \lbrack t,t^{-1}\rbrack$:
\begin{align*}
x = \begin{pmatrix} a(t) &b(t) \\ c(t) & d(t) \end{pmatrix},\qquad \qquad a(t), \;b(t), \;c(t),\; d(t) \;\in\; \mathbb F\lbrack t, t^{-1} \rbrack.
\end{align*}

\noindent Next, we describe an  automorphism $\theta$ of $L(\mathfrak{gl}_2)$. Define the matrix $z=\bigl(\begin{smallmatrix}0 & 1 \\ 1 & 0 \end{smallmatrix}\bigr)$ and note that $z^2 = I$.
The Lie algebra $\mathfrak{gl}_2$ has an automorphism that sends $u \mapsto z u z^{-1}$ for all $u \in \mathfrak{gl}_2$. For
$u=\bigl(\begin{smallmatrix}a &b \\ c & d \end{smallmatrix}\bigr)$ we have $zuz^{-1}=\bigl(\begin{smallmatrix}d & c \\ b & a \end{smallmatrix}\bigr)$.
The automorphism $\theta$ of  $L(\mathfrak{gl}_2)$ sends $u\otimes \varphi \mapsto zuz^{-1} \otimes \tau(\varphi)$
for all $u \in \mathfrak{gl}_2$ and $\varphi \in \mathbb F\lbrack t, t^{-1} \rbrack$. For $x \in L(\mathfrak{gl}_2)$ the elements $x$, $\theta(x)$ are related as follows:
\begin{align}
x = \begin{pmatrix} a(t) & b(t) \\ c(t) & d(t) \end{pmatrix}, \qquad \qquad
\theta(x) = \begin{pmatrix} d(t^{-1}) & c(t^{-1}) \\ b(t^{-1}) & a(t^{-1}) \end{pmatrix}.
\label{eq:xtheta}
\end{align} 
\noindent The automorphism $\theta$ sends
\begin{align*}
&I \otimes t^n \mapsto I \otimes t^{-n}, \qquad \qquad
e\otimes t^n \mapsto f \otimes t^{-n}, \\
&f\otimes t^n \mapsto e \otimes t^{-n}, \qquad \qquad
h \otimes t^n \mapsto -h \otimes t^{-n}
\end{align*}
for $n \in \mathbb Z$. We have $\theta^2 = {\rm id}$, where id denotes the identity map.
The automorphism $\theta$ leaves $L(\mathfrak{sl}_2)$ invariant, and the restriction of $\theta$ to $L(\mathfrak{sl}_2)$ is an automorphism
of $L(\mathfrak{sl}_2)$.
\medskip

\noindent  
Next, we describe an automorphism $\sigma$ of $L(\mathfrak{gl}_2)$. Define the matrix $\zeta=\bigl(\begin{smallmatrix}0 & t \\ 1 & 0 \end{smallmatrix}\bigr)$ and note that $\zeta^2 = t I$.
 The automorphism $\sigma$ of  $L(\mathfrak{gl}_2)$ sends $x \mapsto \zeta x \zeta^{-1}$  
for all $x \in L(\mathfrak{gl}_2)$. 
 For $x \in L(\mathfrak{gl}_2)$ the elements $x$, $\sigma(x)$ are related as follows:
\begin{align}
x = \begin{pmatrix} a(t) & b(t) \\ c(t) & d(t) \end{pmatrix}, \qquad \qquad
\sigma(x) = \begin{pmatrix} d(t) & c(t)t \\ b(t)t^{-1} & a(t) \end{pmatrix}.
\label{eq:xsigma}
\end{align} 
\noindent The automorphism $\sigma$ sends
\begin{align*}
&I \otimes t^n \mapsto I \otimes t^{n}, \qquad \qquad
e\otimes t^n \mapsto f \otimes t^{n-1}, \\
&f\otimes t^n \mapsto e \otimes t^{n+1}, \qquad \qquad
h \otimes t^n \mapsto -h \otimes t^{n}
\end{align*}
for $n \in \mathbb Z$. We have $\sigma^2 = {\rm id}$.
The automorphism $\sigma$ leaves $L(\mathfrak{sl}_2)$ invariant, and the restriction of $\sigma$ to $L(\mathfrak{sl}_2)$ is an automorphism
of $L(\mathfrak{sl}_2)$.
\medskip
 
 \noindent An {\it antiautomorphism}  of a Lie algebra $\mathfrak{g}$ is a vector space  isomorphism $\alpha :\mathfrak{g} \to \mathfrak{g}$ that sends
$\lbrack u, v\rbrack \mapsto \lbrack \alpha(v), \alpha(u)\rbrack$
 for all $u,v \in \mathfrak{g}$.
We now describe an antiautomorphism $\dagger$ of $L(\mathfrak{gl}_2)$. This map  sends $u\otimes \varphi \mapsto  u^{\rm t} \otimes \tau(\varphi )$ $({\rm t}= {\rm transpose})$
for all $u \in \mathfrak{gl}_2$ and $\varphi \in \mathbb F\lbrack t, t^{-1} \rbrack$.  For $x \in L(\mathfrak{gl}_2)$ the elements $x$, $\dagger(x)$ are related as follows:
\begin{align}
x = \begin{pmatrix} a(t) & b(t) \\ c(t) & d(t) \end{pmatrix}, \qquad \qquad
\dagger(x) = \begin{pmatrix} a(t^{-1}) & c(t^{-1})\\ b(t^{-1})& d(t^{-1}) \end{pmatrix}.
\label{eq:xdagger}
\end{align} 
\noindent The automorphism $\dagger$ sends
\begin{align*}
&I \otimes t^n \mapsto I \otimes t^{-n}, \qquad \qquad
e\otimes t^n \mapsto f \otimes t^{-n}, \\
&f\otimes t^n \mapsto e \otimes t^{-n}, \qquad \qquad
h \otimes t^n \mapsto h \otimes t^{-n}
\end{align*}
for $n \in \mathbb Z$. We have $\dagger^2 = {\rm id}$.
Note that $\dagger$ leaves $L(\mathfrak{sl}_2)$ invariant, and the restriction of $\dagger$ to $L(\mathfrak{sl}_2)$ is an antiautomorphism
of $L(\mathfrak{sl}_2)$.
\medskip

\noindent In the previous paragraphs, we define the automorphisms $\theta, \sigma$ of $L(\mathfrak{gl}_2)$ and the antiautomorphism $\dagger$ of $L(\mathfrak{gl}_2)$.
The maps $\theta$, $\sigma$, $\dagger$ mutually commute.
\medskip

 \section{The Onsager Lie algebra} 
 In this section we recall the Onsager Lie algebra and review its basic properties. This material is intended to motivate Sections 4, 5
 which contain our main results. For more background information, see  \cite{chaar}.

 \begin{definition} \label{def:onsager}
 \rm (See \cite{roan}, \cite[Section~2]{dRC}.)
 Let $O$ denote the Lie subalgebra of $L(\mathfrak{sl}_2)$ consisting of the elements that are fixed by $\theta$.
 We call $O$ the {\it Onsager Lie algebra}.
 \end{definition}
 
 \begin{lemma} \label{lem:aa} The Lie algebra $O$ is invariant under $\sigma$ and $\dagger$.
 \end{lemma}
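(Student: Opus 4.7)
The plan is to read the lemma as an immediate consequence of two facts already recorded at the end of Section 2: (a) $\sigma$ and $\dagger$ restrict to a Lie algebra automorphism and antiautomorphism of $L(\mathfrak{sl}_2)$, respectively, and (b) the three maps $\theta$, $\sigma$, $\dagger$ on $L(\mathfrak{gl}_2)$ mutually commute. Given Definition 3.1, which characterizes $O$ as the subspace of $L(\mathfrak{sl}_2)$ fixed by $\theta$, the lemma is a one-line chase.

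First I would fix $x \in O$, so that $x \in L(\mathfrak{sl}_2)$ and $\theta(x) = x$. To show $\sigma(x) \in O$ I verify the two defining conditions: $\sigma(x) \in L(\mathfrak{sl}_2)$ holds by (a), and $\theta(\sigma(x)) = \sigma(\theta(x)) = \sigma(x)$ by (b) together with $\theta(x)=x$. The identical argument with $\dagger$ in place of $\sigma$ gives $\dagger(x) \in O$: membership in $L(\mathfrak{sl}_2)$ again comes from (a), while $\theta(\dagger(x)) = \dagger(\theta(x)) = \dagger(x)$ comes from (b).

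If the asserted mutual commutativity is to be used as a black box, nothing else is required. If one wants to be self-contained, the check that $\theta\sigma=\sigma\theta$ and $\theta\dagger=\dagger\theta$ can be done on generators $u\otimes t^n$ with $u\in\{I,e,f,h\}$ using the explicit formulas given in Section 2 for the action of each map on such generators; this is a short finite verification. Alternatively one can read off the commutativity directly from the matrix descriptions \eqref{eq:xtheta}, \eqref{eq:xsigma}, \eqref{eq:xdagger}, since each map is described by an explicit combination of transposition, conjugation by $z$ or $\zeta$, and $t\mapsto t^{-1}$, and one only needs to check that these operations commute up to the appropriate identifications.

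There is no substantive obstacle; the only thing to watch is bookkeeping, namely that one really has both conditions (membership in $L(\mathfrak{sl}_2)$ and $\theta$-invariance) for each of $\sigma(x)$ and $\dagger(x)$. Since both are supplied by Section 2, the proof is essentially immediate.
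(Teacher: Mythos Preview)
Your proof is correct and follows essentially the same approach as the paper, which simply writes ``Since $\sigma$ and $\dagger$ commute with $\theta$.'' You have merely unpacked this one-liner into its two constituent checks (membership in $L(\mathfrak{sl}_2)$ and $\theta$-invariance), both of which are recorded at the end of Section~2.
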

 \begin{proof} Since  $\sigma$ and $\dagger$ commute with $\theta$.
 \end{proof}
 
\noindent By Lemma \ref{lem:aa} and the construction,
 the restriction of $\sigma$ (resp. $\dagger$)  to $O$ is an automorphism (resp. antiautomorphism) of the Lie algebra $O$.
 \medskip
 
 \noindent Our next goal is to display a basis for $O$.
 
 \begin{definition}\label{def:matrices}\rm For $k \in \mathbb Z$ define $A_k, B_k \in L(\mathfrak{sl}_2)$ as follows:
 \begin{align} \label{eq:mats}
A_k =\begin{pmatrix} 0 & t^k\\ t^{-k} & 0 \end{pmatrix}, \qquad \quad
B_k = \frac{t^k - t^{-k}}{2} \begin{pmatrix} 1 & 0\\ 0 & -1 \end{pmatrix}.
\end{align}
 \end{definition}
\noindent Referring to Definition \ref{def:matrices}, we have $B_0=0$ and $B_{-k} +B_{k} =0$ for $k \in \mathbb Z$.

\begin{lemma} 
\label{def:ons} {\rm (See \cite[Section~2]{dRC}.)}
The elements
 $\lbrace A_k\rbrace_{k \in \mathbb Z}$, $\lbrace B_{k} \rbrace_{k\in \mathbb N^+}$ form a basis for the vector space $O$. Moreover for $k,\ell \in \mathbb Z$,
\begin{align}
\lbrack A_k, A_\ell \rbrack &= 2B_{k-\ell},       
 \label{eq:ComAA}
\\
\lbrack B_k, A_{\ell} \rbrack &= A_{k+\ell}- A_{\ell-k}, 
\label{eq:ComAG}
\\
\lbrack B_k, B_\ell \rbrack & = 0.
\label{eq:ComGG}
\end{align}
\end{lemma}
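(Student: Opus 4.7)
The plan is to verify the basis claim directly from Definition \ref{def:onsager}, and then verify the three bracket relations by straightforward matrix multiplication in $L(\mathfrak{sl}_2)$.

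First I would confirm that each $A_k$ and $B_k$ lies in $O$. Using the formula \eqref{eq:xtheta}, one computes $\theta(A_k) = \bigl(\begin{smallmatrix} 0 & t^{k} \\ t^{-k} & 0\end{smallmatrix}\bigr) = A_k$ and $\theta(B_k) = B_k$, since the diagonal matrix $\operatorname{diag}(1,-1)$ is preserved in diagonal position after swapping entries and negating (because $d(t)=-a(t)$), while the Laurent polynomial $(t^k-t^{-k})/2$ is invariant under $t\mapsto t^{-1}$ up to the sign that is already absorbed. Also note $B_0=0$ and $B_{-k}=-B_k$, which explains why the index set for the $B_k$ is restricted to $\mathbb{N}^+$.

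Next I would show the spanning property. A general element of $L(\mathfrak{sl}_2)$ has the form $x=\bigl(\begin{smallmatrix} a(t) & b(t) \\ c(t) & -a(t)\end{smallmatrix}\bigr)$ with $a,b,c\in\mathbb{F}[t,t^{-1}]$. By \eqref{eq:xtheta}, the condition $\theta(x)=x$ is equivalent to $a(t)=-a(t^{-1})$ and $c(t)=b(t^{-1})$. Writing $a(t)=\sum_{n\in\mathbb{Z}} a_n t^n$, the first condition forces $a_0=0$ and $a_{-n}=-a_n$, so that $a(t)=\sum_{k\in\mathbb{N}^+} a_k(t^k-t^{-k})$; the corresponding diagonal part of $x$ is thus $\sum_{k\in\mathbb{N}^+} 2a_k B_k$. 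Writing $b(t)=\sum_{k\in\mathbb{Z}} b_k t^k$ we get $c(t)=\sum_{k\in\mathbb{Z}} b_k t^{-k}$, and the off-diagonal part of $x$ equals $\sum_{k\in\mathbb{Z}} b_k A_k$. Hence the $A_k$ and $B_k$ span $O$. Linear independence is immediate: the $A_k$ ($k\in\mathbb{Z}$) occupy the off-diagonal entries and involve distinct monomials $t^k$, while the $B_k$ ($k\in\mathbb{N}^+$) occupy the diagonal, and the Laurent polynomials $t^k-t^{-k}$ for $k\in\mathbb{N}^+$ are linearly independent.

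Finally I would verify the three commutation relations. For \eqref{eq:ComAA}, the product $A_kA_\ell$ is the diagonal matrix $\operatorname{diag}(t^{k-\ell},t^{\ell-k})$, so $[A_k,A_\ell]=(t^{k-\ell}-t^{\ell-k})\operatorname{diag}(1,-1)=2B_{k-\ell}$. For \eqref{eq:ComAG}, a direct computation gives
\begin{align*}
[B_k,A_\ell]=(t^k-t^{-k})\begin{pmatrix}0 & t^\ell \\ -t^{-\ell} & 0\end{pmatrix}=\begin{pmatrix}0 & t^{k+\ell}-t^{\ell-k}\\ t^{-(k+\ell)}-t^{-(\ell-k)} & 0\end{pmatrix}=A_{k+\ell}-A_{\ell-k}.
\end{align*}
For \eqref{eq:ComGG}, both $B_k$ and $B_\ell$ are scalar multiples of the single diagonal matrix $\operatorname{diag}(1,-1)$, hence they commute. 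No step presents a genuine obstacle; the only mild pitfall is keeping track of signs and index conventions arising from $B_{-k}=-B_k$ and the relation $d(t)=-a(t)$ in $\mathfrak{sl}_2$.
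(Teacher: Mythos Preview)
Your argument is correct and follows exactly the approach indicated in the paper: verifying the basis assertion directly from the description \eqref{eq:xtheta} of $\theta$ and Definition \ref{def:onsager}, and checking the bracket relations by matrix multiplication using Definition \ref{def:matrices}. Your write-up simply fills in the details that the paper leaves to the reader; the only cosmetic point is that the sentence about $\theta(B_k)=B_k$ (``up to the sign that is already absorbed'') could be replaced by the one-line computation $d(t^{-1})=-(t^{-k}-t^{k})/2=a(t)$.
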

\begin{proof} The first assertion is readily checked using 
\eqref{eq:xtheta} and Definition \ref{def:onsager}. The relations  \eqref{eq:ComAA}--
\eqref{eq:ComGG} are verified using Definition \ref{def:matrices} and matrix multiplication.
\end{proof}

\begin{note}\rm
The basis for $O$ from Lemma \ref{def:ons}  
 is a bit nonstandard. 
 The elements $A_k, B_k$ in this basis
correspond to the elements called $A_k/2, G_k/2$
in \cite[Section~2]{dRC}. 
We make this adjustment for notational convenience.
\end{note}
 
\begin{lemma}\label{lem:aa1} For $k \in \mathbb Z$
the automorphism $\sigma$ sends
\begin{align*}
A_k \mapsto A_{1-k}, \qquad \qquad B_k \mapsto B_{-k},
\end{align*}
and the antiautomorphism $\dagger$  sends
\begin{align*}
A_k \mapsto A_k, \qquad \qquad B_k \mapsto B_{-k}.
\end{align*}
\end{lemma}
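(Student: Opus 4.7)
The proof is a direct matrix computation, so the plan is to simply substitute $A_k$ and $B_k$ from Definition \ref{def:matrices} into the explicit formulas \eqref{eq:xsigma} and \eqref{eq:xdagger} for $\sigma$ and $\dagger$, and read off the answers. No serious obstacle is expected; the main thing to be careful about is tracking the signs and the $t^{\pm 1}$ factors in the off-diagonal entries.

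First I would handle $\sigma$. Writing $A_k$ as a matrix with $a(t)=d(t)=0$, $b(t)=t^k$, $c(t)=t^{-k}$, formula \eqref{eq:xsigma} gives
\begin{align*}
\sigma(A_k) = \begin{pmatrix} 0 & t^{-k}\cdot t \\ t^k \cdot t^{-1} & 0 \end{pmatrix} = \begin{pmatrix} 0 & t^{1-k} \\ t^{-(1-k)} & 0 \end{pmatrix} = A_{1-k}.
\end{align*}
For $B_k$, the only nonzero entries are $a(t) = \tfrac{t^k - t^{-k}}{2}$ and $d(t) = -a(t)$, both preserved by $\tau$ in a simple way. Swapping the diagonal entries as dictated by \eqref{eq:xsigma} flips the sign of $\tfrac{t^k - t^{-k}}{2}\,\mathrm{diag}(1,-1)$, and since $\tfrac{t^{-k}-t^k}{2}\,\mathrm{diag}(1,-1) = B_{-k}$, we get $\sigma(B_k) = B_{-k}$.

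Next I would handle $\dagger$ in the same manner. For $A_k$, formula \eqref{eq:xdagger} replaces $t$ by $t^{-1}$ and swaps the off-diagonal entries; but the off-diagonal entries $t^k, t^{-k}$ of $A_k$ are interchanged precisely by $t \mapsto t^{-1}$, so the two operations cancel and $\dagger(A_k) = A_k$. For $B_k$, the substitution $t \mapsto t^{-1}$ sends $\tfrac{t^k - t^{-k}}{2}$ to $-\tfrac{t^k - t^{-k}}{2}$, which yields $B_{-k}$.

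Since the statement reduces to four unambiguous matrix identities, I would present the argument compactly by citing \eqref{eq:xsigma}, \eqref{eq:xdagger}, and Definition \ref{def:matrices}, and leaving the verification to the reader as a one-line check. There is no genuine obstacle here; the lemma is essentially bookkeeping, and its real purpose is to record how $\sigma$ and $\dagger$ act on the Onsager basis so that Lemma \ref{def:ons} can be combined with these actions in later sections.
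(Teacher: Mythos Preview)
Your proposal is correct and follows exactly the paper's approach: the paper's proof simply cites Definition~\ref{def:matrices} together with \eqref{eq:xsigma} and \eqref{eq:xdagger} and leaves the matrix verification to the reader, while you have spelled out that verification explicitly.
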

\begin{proof} The first assertion is verified using Definition \ref{def:matrices} and \eqref{eq:xsigma}. The second assertion is verified using Definition \ref{def:matrices} and \eqref{eq:xdagger}.
\end{proof}

\noindent We give another basis for $O$.

\begin{definition} \label{def:WWGG}
\rm For $k \in \mathbb N$ define
\begin{align}
\label{eq:WW}
&W_{-k} = \biggl(\frac{t+t^{-1}}{2} \biggr)^k \begin{pmatrix} 0 &1 \\ 1 & 0 \end{pmatrix}, \qquad \quad 
W_{k+1} = \biggl(\frac{t+t^{-1}}{2} \biggr)^k \begin{pmatrix} 0 &t \\ t^{-1} & 0 \end{pmatrix}, \\
&\tilde G_{k+1} = \biggl(\frac{t+t^{-1}}{2} \biggr)^k \begin{pmatrix} t^{-1}-t&0 \\ 0 & t-t^{-1} \end{pmatrix}.
\label{eq:tG}
\end{align}
Note that $W_0=A_0$ and $W_1=A_1$.
\end{definition}

\noindent The following basis is a variation on \cite[Definition~4.1 and Theorem~2]{BC17}.

\begin{lemma} \label{lem:OA2} The elements
\begin{align}
\label{eq:basisWWG}
\lbrace W_{-k}\rbrace_{k \in \mathbb N}, \quad \lbrace W_{k+1}\rbrace_{k \in \mathbb N}, \quad \lbrace {\tilde G}_{k+1}\rbrace_{k \in \mathbb N}
\end{align}
form a basis for the vector space $O$. Moreover for $k, \ell \in \mathbb N$,
\begin{align}
\lbrack W_{-k}, W_{\ell+1} \rbrack &= \tilde G_{k+\ell+1}, \label{eq:com1}
\\
\lbrack \tilde G_{k+1}, W_{-\ell} \rbrack &= 4 W_{-k-\ell-1} - 4 W_{k+\ell+1}, \label{eq:com2}
\\
\lbrack W_{k+1}, \tilde G_{\ell+1} \rbrack &= 4 W_{k+\ell+2} - 4W_{-k-\ell}, \label{eq:com3}
\\
\lbrack W_{-k}, W_{-\ell} \rbrack &=0, \qquad 
\lbrack W_{k+1}, W_{\ell+1} \rbrack=0, \qquad
 \lbrack \tilde G_{k+1}, \tilde G_{\ell+1} \rbrack=0.
 \label{eq:com456}
 \end{align}
 \end{lemma}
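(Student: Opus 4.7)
The plan is to verify (i) each of $W_{-k}$, $W_{k+1}$, $\tilde G_{k+1}$ lies in $O$; (ii) the three families jointly form a basis for $O$; and (iii) the bracket identities \eqref{eq:com1}--\eqref{eq:com456}. For (i), we apply \eqref{eq:xtheta} and note that the Laurent polynomial $c_k := ((t+t^{-1})/2)^k$ is invariant under $t \mapsto t^{-1}$; this makes $\theta$-invariance immediate in all three cases, and vanishing of the trace places these matrices in $L(\mathfrak{sl}_2)$, hence in $O$ by Definition \ref{def:onsager}.

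For (ii), rewrite the generators in loop-algebra tensor form: $W_{-k} = (e+f) \otimes c_k$, $W_{k+1} = e \otimes c_k t + f \otimes c_k t^{-1}$, and $\tilde G_{k+1} = h \otimes c_k(t^{-1}-t)$. Comparing with Lemma \ref{def:ons}, the $W$-family lies in the subspace $O_A := \sum_{n \in \mathbb Z} \mathbb F A_n$ (zero diagonal) and the $\tilde G$-family in $O_B := \sum_{n \in \mathbb N^+} \mathbb F B_n$ (zero off-diagonal), with $O = O_A \oplus O_B$. Identifying $O_A$ with $\mathbb F \lbrack t, t^{-1} \rbrack$ via the top-right entry, and $O_B$ with the antisymmetric Laurent polynomials $\{p : p(t^{-1}) = -p(t)\}$ via the top-left entry, the basis claim reduces to the two polynomial-level assertions: $\{c_k\}_{k \geq 0} \cup \{c_k t\}_{k \geq 0}$ is a basis for $\mathbb F \lbrack t, t^{-1} \rbrack$, and $\{c_k(t - t^{-1})\}_{k \geq 0}$ is a basis for the antisymmetric Laurent polynomials. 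Both follow from a triangularity argument on leading monomials, using that $c_k$ has leading term $2^{-k} t^k$.

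For (iii), the loop-algebra rule $[u \otimes \varphi, v \otimes \phi] = [u,v] \otimes \varphi \phi$ combined with $c_k c_\ell = c_{k+\ell}$ reduces each bracket to a short calculation. The three identities in \eqref{eq:com456} are immediate: $[W_{-k}, W_{-\ell}] = 0$ since $[e+f,e+f]=0$; $[\tilde G_{k+1}, \tilde G_{\ell+1}] = 0$ since $[h,h]=0$; and $[W_{k+1}, W_{\ell+1}]$ produces a matched pair $\pm h \otimes c_k c_\ell$ from the $[e,f]$ and $[f,e]$ cross-terms that cancel. Identity \eqref{eq:com1} expands directly to $h \otimes c_{k+\ell}(t^{-1}-t) = \tilde G_{k+\ell+1}$.

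The main obstacle will be \eqref{eq:com2} and \eqref{eq:com3}, where the raw bracket produces an element of the form $2(e-f) \otimes c_{k+\ell}(t^{-1}-t)$ (respectively an analogous expression) that must be recast as a combination of the $W$'s. The essential simplification is the polynomial identity $c_m t = c_{m+1} + \tfrac{1}{2} c_m(t - t^{-1})$, which follows from $c_1 = (t+t^{-1})/2$ and $c_{m+1} = c_m c_1$. Once invoked, the bracket rearranges cleanly into the right-hand sides of \eqref{eq:com2} and \eqref{eq:com3}, and all remaining steps are mechanical.
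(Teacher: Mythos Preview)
Your proof is correct and follows essentially the same direct-verification approach as the paper, which simply says the basis assertion is ``readily checked using \eqref{eq:xtheta} and Definition~\ref{def:onsager}'' and the relations are ``verified using Definition~\ref{def:WWGG} and matrix multiplication.'' You have fleshed out those verifications explicitly in loop-algebra tensor notation (and leaned on the $\{A_k,B_k\}$ basis from Lemma~\ref{def:ons} to organize the basis argument), but the underlying strategy is the same.
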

\begin{proof} The first assertion is readily checked using 
\eqref{eq:xtheta} and Definition \ref{def:onsager}. The relations  \eqref{eq:com1}--
\eqref{eq:com456} are verified using Definition \ref{def:WWGG} and matrix multiplication.
\end{proof}

\begin{note}\rm  The notations of Lemma \ref{lem:OA2} and \cite[Definition~4.1]{BC17} are related as follows.
The elements $W_{-k}$, $W_{k+1}$, $\tilde G_{k+1}$ in Lemma \ref{lem:OA2} correspond to the elements called 
$W_{-k}/2$, $W_{k+1}/2$, $\tilde G_{k+1}/4$ in
 \cite[Definition~4.1]{BC17}.
 \end{note}

\begin{lemma} \label{lem:aaWG}
For $k \in \mathbb N$, the automorphism $\sigma$ sends
\begin{align*}
W_{-k} \mapsto W_{k+1}, 
\qquad \qquad 
W_{k+1} \mapsto W_{-k}, \qquad \qquad 
{\tilde G}_{k+1} \mapsto -{\tilde G}_{k+1},
\end{align*}
and the antiautomorphism $\dagger$ sends
\begin{align*}
W_{-k} \mapsto W_{-k}, 
\qquad \qquad 
W_{k+1} \mapsto W_{k+1}, \qquad \qquad 
{\tilde G}_{k+1} \mapsto -{\tilde G}_{k+1}.
\end{align*}
\end{lemma}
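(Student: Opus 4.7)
The plan is to prove Lemma \ref{lem:aaWG} by direct matrix calculation, applying the explicit formulas \eqref{eq:xsigma} and \eqref{eq:xdagger} to the matrix representations of $W_{-k}$, $W_{k+1}$, $\tilde G_{k+1}$ given in Definition \ref{def:WWGG}. The key preliminary observation is that the scalar prefactor $\bigl(\frac{t+t^{-1}}{2}\bigr)^k$ is invariant under the involution $\tau:t\mapsto t^{-1}$, so it is fixed by both $\sigma$ and $\dagger$ and can be factored out of every computation.

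For $\sigma$, I would substitute the entries $a(t),b(t),c(t),d(t)$ of each basis element into the template
\begin{align*}
x = \begin{pmatrix} a(t) & b(t) \\ c(t) & d(t) \end{pmatrix}, \qquad
\sigma(x) = \begin{pmatrix} d(t) & c(t)\,t \\ b(t)\,t^{-1} & a(t) \end{pmatrix}.
\end{align*}
For $W_{-k}$, the off-diagonal entries $1,1$ become $t^{-1},t$ after the swap-and-twist, producing exactly the matrix $W_{k+1}$. For $W_{k+1}$, the off-diagonal entries $t^{-1},t$ become $t\cdot t^{-1}=1$ and $t^{-1}\cdot t=1$, reproducing $W_{-k}$. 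For $\tilde G_{k+1}$, which is diagonal, the diagonal entries $t^{-1}-t$ and $t-t^{-1}$ swap positions, and each one changes sign in its new position, yielding $-\tilde G_{k+1}$.

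For $\dagger$, I would similarly substitute into
\begin{align*}
\dagger(x) = \begin{pmatrix} a(t^{-1}) & c(t^{-1}) \\ b(t^{-1}) & d(t^{-1}) \end{pmatrix}.
\end{align*}
For $W_{-k}$, the off-diagonals $1,1$ are unchanged and left in place, so $W_{-k}$ is fixed. For $W_{k+1}$, the off-diagonals $t,t^{-1}$ are swapped \emph{and} sent to $t^{-1},t$ by $\tau$, and these two operations cancel, so $W_{k+1}$ is fixed. For $\tilde G_{k+1}$, the diagonal entries $t^{-1}-t$ and $t-t^{-1}$ stay on the diagonal under transpose but are sent to their negatives by $\tau$, producing $-\tilde G_{k+1}$.

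There is no genuine obstacle: the argument reduces to six brief matrix computations, each with the scalar prefactor pulled out in advance. The only place where care is needed is the asymmetric factors of $t$ and $t^{-1}$ that $\sigma$ inserts on the two off-diagonal entries, where one must check that they align correctly with the corresponding entries of the target basis element.
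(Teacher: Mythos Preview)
Your proposal is correct and follows exactly the same approach as the paper, which simply says to verify the assertions using Definition~\ref{def:WWGG} together with \eqref{eq:xsigma} and \eqref{eq:xdagger}. You have just written out the six matrix computations in full detail, which the paper leaves implicit.
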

\begin{proof} The first assertion is verified using Definition \ref{def:WWGG} and \eqref{eq:xsigma}. The second assertion is verified using Definition \ref{def:WWGG} and \eqref{eq:xdagger}.
\end{proof}

\noindent We have a comment.
\begin{lemma}\label{lem:centerO}   
The center of $O$ is equal to $0$.
\end{lemma}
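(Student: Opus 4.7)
My plan is to work in the matrix realization $O \subseteq L(\mathfrak{sl}_2)$ and argue by a direct computation. The key observation is that the family $\{A_k\}_{k \in \mathbb{Z}}$ from Definition \ref{def:matrices} generates $O$ as a Lie algebra: by \eqref{eq:ComAA}, the brackets $[A_k, A_\ell] = 2 B_{k-\ell}$ produce every basis element $B_m$ with $m \in \mathbb{Z}$. Consequently, if $z \in O$ commutes with $A_k$ for every $k \in \mathbb{Z}$, then $z$ is central. So I would reduce the problem to showing: the only $z \in O$ with $[z, A_k] = 0$ for all $k \in \mathbb{Z}$ is $z = 0$.

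First, I would write an arbitrary $z \in O$ as a traceless matrix
\begin{equation*}
z = \begin{pmatrix} a(t) & b(t) \\ c(t) & -a(t) \end{pmatrix}, \qquad a(t), b(t), c(t) \in \mathbb{F}[t, t^{-1}],
\end{equation*}
where the $\theta$-invariance condition from \eqref{eq:xtheta} (i.e.\ Definition \ref{def:onsager}) imposes $a(t) = -a(t^{-1})$ and $b(t) = c(t^{-1})$. A straightforward matrix multiplication then yields
\begin{equation*}
[z, A_k] = \begin{pmatrix} b(t) t^{-k} - c(t) t^k & 2 a(t) t^k \\ -2 a(t) t^{-k} & c(t) t^k - b(t) t^{-k} \end{pmatrix}.
\end{equation*}
Setting the upper-right entry to zero (already for $k=0$) forces $a(t) = 0$. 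Setting the upper-left entry to zero for $k = 0$ gives $b(t) = c(t)$, and then for $k = 1$ gives $b(t)(t^{-1} - t) = 0$; since $\mathbb{F}[t, t^{-1}]$ is an integral domain this yields $b(t) = 0$, and hence $c(t) = 0$. Therefore $z = 0$.

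No real difficulty is anticipated; the whole argument is a short matrix calculation once one records the $\theta$-fixed traceless form of an element of $O$. The only mild subtlety is the initial reduction to commutation with the single family $\{A_k\}_{k \in \mathbb{Z}}$ rather than all of $O$, which is immediate from \eqref{eq:ComAA}.
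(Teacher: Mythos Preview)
Your proof is correct and follows essentially the same route as the paper's: write a central element as a traceless $2\times 2$ matrix over $\mathbb{F}[t,t^{-1}]$ and use commutation with $A_0$ and $A_1$ to force it to vanish. Your preliminary observation that $\{A_k\}_{k\in\mathbb Z}$ generates $O$ is harmless but unnecessary, since any central element trivially lies in the centralizer of $\{A_0,A_1\}$, and you only use $k=0,1$ anyway.
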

\begin{proof} A matrix $x$ in the center of $O$ has trace 0, and commutes with $A_0$ and $A_1$. By matrix multiplication we find $x=0$.
\end{proof}

 \noindent  We mention two presentations of $O$ by generators and relations.

 \begin{lemma}
 \label{def:OA}  {\rm (See \cite[Section~2]{dRC}.)}
 The Lie algebra $O$ has a presentation by generators $W_0, W_1$ and relations
 \begin{align} \label{eq:DG}
\lbrack W_0, \lbrack W_0, \lbrack W_0, W_1\rbrack \rbrack \rbrack =4\lbrack W_0, W_1 \rbrack,
\qquad \quad
\lbrack W_1, \lbrack W_1, \lbrack W_1, W_0\rbrack \rbrack\rbrack = 4\lbrack W_1 W_0 \rbrack.
\end{align}
 \end{lemma}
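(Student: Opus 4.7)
The plan is to show that the assignment $W_0 \mapsto W_0$, $W_1 \mapsto W_1$ extends to an isomorphism between the abstractly presented Lie algebra and $O$. Let $\tilde O$ denote the Lie algebra defined by the presentation in the statement. First I would verify that the elements $W_0 = A_0$ and $W_1 = A_1$ of $O$ do satisfy the Dolan/Grady relations \eqref{eq:DG}. This is a direct matrix computation using Definition \ref{def:matrices}: one has $[A_0,A_1] = 2B_{-1} = -2B_1$, then $[A_0,[A_0,A_1]]$ can be evaluated with \eqref{eq:ComAG}, and iterating gives $[W_0,[W_0,[W_0,W_1]]] = 4[W_0,W_1]$; the second relation follows by symmetry. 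This produces a Lie algebra homomorphism $\pi : \tilde O \to O$.

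Next I would establish surjectivity of $\pi$. Using \eqref{eq:com1}--\eqref{eq:com456} with $W_0, W_1$ as seeds, one checks that $\tilde G_1 = [W_0, W_1]$, and then inductively that $W_{-k-1}$ and $W_{k+2}$ and $\tilde G_{k+2}$ all lie in the Lie subalgebra of $O$ generated by $W_0, W_1$, since \eqref{eq:com2}, \eqref{eq:com3}, \eqref{eq:com1} express each new basis vector as a bracket of previously obtained ones. By Lemma \ref{lem:OA2}, the elements \eqref{eq:basisWWG} span $O$, so $\pi$ is onto.

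The main obstacle is injectivity, which amounts to saying the Dolan/Grady relations are rich enough to capture all relations of $O$. The strategy is to define, purely inside $\tilde O$, elements $\mathcal W_{-k}, \mathcal W_{k+1}, \mathcal{\tilde G}_{k+1}$ by recursive formulas mimicking \eqref{eq:com1}--\eqref{eq:com3}, namely $\mathcal W_0 = W_0$, $\mathcal W_1 = W_1$, $\mathcal{\tilde G}_{k+\ell+1} = [\mathcal W_{-k}, \mathcal W_{\ell+1}]$, and then using induction (with the Dolan/Grady relations as the base case) to prove that these elements satisfy the full set of identities \eqref{eq:com1}--\eqref{eq:com456} in $\tilde O$. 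The induction is where essentially all the work sits: one must repeatedly invoke the Jacobi identity and carefully reduce higher nested brackets using \eqref{eq:DG}, as carried out in \cite[Section~2]{dRC}. Once this is done, the recursively defined elements span $\tilde O$ (since every bracket of generators can be reduced), and their images under $\pi$ are precisely the basis \eqref{eq:basisWWG} of $O$. Hence they are linearly independent in $\tilde O$, so $\pi$ is a linear bijection and therefore an isomorphism of Lie algebras, completing the proof.
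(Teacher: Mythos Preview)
The paper does not supply its own proof of this lemma; it simply records the result with a citation to \cite[Section~2]{dRC}, so there is nothing in the paper to compare against. Your outline is the standard argument from that reference: construct the surjection $\pi:\tilde O\to O$, then prove injectivity by building elements of $\tilde O$ recursively and showing, via a Jacobi-identity induction seeded by \eqref{eq:DG}, that they obey the full Onsager relations and hence span $\tilde O$ while mapping to a basis of $O$. One small point to tighten: your formula $\mathcal{\tilde G}_{k+\ell+1}=[\mathcal W_{-k},\mathcal W_{\ell+1}]$ is not a definition until you fix a particular choice of $(k,\ell)$ (say $\ell=0$), and likewise you need explicit recursive definitions for $\mathcal W_{-k-1}$ and $\mathcal W_{k+2}$ (e.g.\ from the $\ell=0$ cases of \eqref{eq:com2}, \eqref{eq:com3}); the consistency of the different expressions is then part of what the induction must establish. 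With that clarification your sketch is correct and matches the cited source.
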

 \begin{note}\rm 
  The relations  \eqref{eq:DG} are called the {\it Dolan/Grady relations} \cite{Dolgra}.
  \end{note}

\begin{lemma} \label{lem:Olong} The Lie algebra $O$
has a presentation 
by generators
\begin{align}
\label{eq:4gensO}
\lbrace W_{-k}\rbrace_{k \in \mathbb N}, \quad 
\lbrace W_{k+1}\rbrace_{k \in \mathbb N}, \quad
 \lbrace {\tilde G}_{k+1}\rbrace_{k \in \mathbb N}
\end{align}
 and the following relations. For $k, \ell \in \mathbb N$,
\begin{align*}
&
 \lbrack W_0, W_{k+1}\rbrack= 
\lbrack W_{-k}, W_{1}\rbrack=
{\tilde G}_{k+1},
\\
&
\lbrack {\tilde G}_{k+1},  W_{0}\rbrack= 
4 W_{-k-1}-4
 W_{k+1},
\\
&
\lbrack  W_{1}, {\tilde G}_{k+1}\rbrack= 
4W_{k+2}-4 
 W_{-k},
\\
&
\lbrack W_{-k}, W_{-\ell}\rbrack=0,  \qquad 
\lbrack W_{k+1}, W_{\ell+1}\rbrack= 0,
\\
&
\lbrack W_{-k}, W_{\ell+1}\rbrack+
\lbrack W_{k+1}, W_{-\ell}\rbrack= 0,
\\
&
\lbrack W_{-k},  {\tilde G}_{\ell+1}\rbrack+
\lbrack {\tilde G}_{k+1},  W_{-\ell}\rbrack= 0,
\\
&
\lbrack W_{k+1}, {\tilde G}_{\ell+1}\rbrack+
\lbrack {\tilde G}_{k+1}, W_{\ell+1}\rbrack= 0,
\\
&
\lbrack {\tilde G}_{k+1},  {\tilde G}_{\ell+1}\rbrack= 0.
\end{align*}
\end{lemma}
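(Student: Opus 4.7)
The plan is to establish the lemma by constructing a Lie algebra isomorphism between $O$ and the abstract Lie algebra $\tilde O$ defined by the stated presentation; throughout I use the same symbols $W_{-k}, W_{k+1}, \tilde G_{k+1}$ for the generators of $\tilde O$ as for the corresponding elements of $O$.

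First I would verify that each defining relation of $\tilde O$ already holds in $O$ when the generators are read via Definition \ref{def:WWGG}. Each one is a direct consequence of the commutation rules \eqref{eq:com1}--\eqref{eq:com456} in Lemma \ref{lem:OA2}; for instance \eqref{eq:com1} is symmetric in its two indices, which simultaneously yields $[W_0, W_{k+1}] = \tilde G_{k+1}$, $[W_{-k}, W_1] = \tilde G_{k+1}$, and the antisymmetry relation $[W_{-k}, W_{\ell+1}] + [W_{k+1}, W_{-\ell}] = 0$. The three mixed antisymmetry relations between the $W$'s and $\tilde G$'s follow in the same way from \eqref{eq:com2} and \eqref{eq:com3}. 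This produces a Lie algebra homomorphism $\psi:\tilde O\to O$, which is surjective since the images of the generators form the basis \eqref{eq:basisWWG}.

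Next I would check that $W_0, W_1 \in \tilde O$ satisfy the Dolan/Grady relations \eqref{eq:DG}. A short calculation using only the defining relations of $\tilde O$ gives $[W_0, W_1] = \tilde G_1$, then $[W_0, [W_0, W_1]] = -[\tilde G_1, W_0] = 4W_1 - 4W_{-1}$, and then $[W_0, [W_0, [W_0, W_1]]] = 4[W_0, W_1] - 4[W_0, W_{-1}] = 4[W_0, W_1]$, where $[W_0, W_{-1}] = 0$ is a defining relation; the second Dolan/Grady relation is verified analogously. By Lemma \ref{def:OA} this yields a Lie algebra homomorphism $\phi:O\to\tilde O$ with $\phi(W_0) = W_0$ and $\phi(W_1) = W_1$.

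Finally I would show that $\phi$ and $\psi$ are mutually inverse. The composition $\psi\phi$ fixes $W_0$ and $W_1$, hence equals $\mathrm{id}_O$ since these generate $O$ by Lemma \ref{def:OA}. For the opposite composition $\phi\psi$ I would argue by induction on $k$ that it fixes each generator $W_{-k}, W_{k+1}, \tilde G_{k+1}$ of $\tilde O$: the base cases $W_0, W_1$ are immediate, and the inductive step uses the defining relations $\tilde G_{k+1} = [W_0, W_{k+1}]$, $W_{-k-1} = \tfrac14[\tilde G_{k+1}, W_0] + W_{k+1}$, and $W_{k+2} = \tfrac14[W_1, \tilde G_{k+1}] + W_{-k}$ to express each next generator as a Lie polynomial in generators already fixed by $\phi\psi$. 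The main point of delicacy is this inductive verification, which threads the three families of generators together through the above recursions; once it is in place, $\phi$ and $\psi$ are inverse isomorphisms and the lemma follows.
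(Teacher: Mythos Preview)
Your proof is correct and follows essentially the same route as the paper: you build the two Lie algebra homomorphisms $\phi:O\to\tilde O$ and $\psi:\tilde O\to O$ exactly as the paper builds its $\alpha$ and $\beta$, verify the Dolan/Grady relations in $\tilde O$ the same way, and your inductive argument that $\phi\psi$ fixes each generator mirrors the paper's induction (on the ordering $W_0,W_1,\tilde G_1,W_{-1},W_2,\tilde G_2,\ldots$) showing that $\alpha$ is surjective. The only cosmetic difference is that the paper phrases the conclusion as ``$\alpha,\beta$ are inverses'' after noting both are surjective, whereas you check $\psi\phi=\mathrm{id}$ and $\phi\psi=\mathrm{id}$ separately.
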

\begin{proof} Define a Lie algebra $O^\vee$ by the presentation in the lemma statement. We show that $O^\vee$ is isomorphic to $O$.
Setting $(k,\ell)=(0,1)$ in the first four relations of the lemma statement, we routinely find that the elements $W_0$, $W_1$ of $O^\vee$ satisfy the Dolan/Grady relations \eqref{eq:DG}. 
By this and Lemma  \ref{def:OA},
 there exists a Lie algebra homomorphism $\alpha: O \to O^\vee$
that sends $W_0 \mapsto W_0$ and $W_1 \mapsto W_1$. The homomorphism $\alpha$ is surjective, because the $O^\vee$-generators \eqref{eq:4gensO} are contained in $\alpha (O)$  by the first three relations in the lemma statement and induction on the ordering
$W_0, W_1, \tilde G_1, W_{-1}, W_2, \tilde G_2, \ldots $
 By matrix multiplication, the elements of $O$  from Definition \ref{def:WWGG} satisfy the relations in the lemma statement. Therefore, there exists
a Lie algebra homomorphism $\beta: O^\vee \to O$ that sends 
\begin{align*}
W_{-k} \mapsto W_{-k}, \qquad \quad
W_{k+1} \mapsto W_{k+1}, \qquad \quad
\tilde G_{k+1} \mapsto {\tilde G}_{k+1}
\end{align*}
for $k \in \mathbb N$. By construction $\beta$ is surjective. The homomorphisms $\alpha$, $\beta$ are inverses, and hence bijections. The result follows.
\end{proof}

\begin{note}\rm There is another proof of Lemma \ref{lem:Olong} that uses Lemma \ref{lem:OA2}. The proof strategy  is to show that the following imply each other: (i) the relations \eqref{eq:com1}--\eqref{eq:com456};
 (ii) the relations listed in Lemma \ref{lem:Olong}. These implications can be obtained by adapting the proof of \cite[Proposition~2.18]{FMA}.
\end{note}

\section{The alternating central extension $\mathcal O$ of $O$}

\noindent In this section we introduce the alternating central extension $\mathcal O$ of $O$.
\medskip

\noindent  To motivate things, we have some comments about the loop algebras $L(\mathfrak{sl}_2)$ and $L(\mathfrak{gl}_2)$. Let $C$ denote the center of $L(\mathfrak{gl}_2)$. We have $C=I \otimes \mathbb F\lbrack t, t^{-1} \rbrack$.
  The sum $L(\mathfrak{gl}_2)=
  C
   + L(\mathfrak{sl}_2)$ is direct, and both summands are ideals in $L(\mathfrak{gl}_2)$.
\medskip

\noindent  There exists a surjective algebra homomorphism
$\gamma: \mathbb F \lbrack t,t^{-1} \rbrack \to \mathbb F$ that sends $t\mapsto 1$. For $\varphi  \in \mathbb F \lbrack t,t^{-1} \rbrack$ the scalar $\gamma(\varphi)$ is described as follows.
Write $\varphi = \sum_{n \in \mathbb Z} a_n t^n$ with $a_n \in \mathbb F$ for $n \in \mathbb Z$. Then $\gamma(\varphi) = \sum_{n \in \mathbb Z} a_n$. Let 
$\mathbb F \lbrack t,t^{-1} \rbrack'$ denote the kernel of $\gamma$. The sum
$\mathbb F \lbrack t,t^{-1} \rbrack=
\mathbb F 1 + \mathbb F \lbrack t,t^{-1} \rbrack'$ is direct. The vector space $\mathbb F \lbrack t,t^{-1} \rbrack'$ has a basis $\lbrace t^n-1 \rbrace_{0 \not=n \in \mathbb Z}$.
Define $C'=I \otimes \mathbb F \lbrack t,t^{-1} \rbrack'$. The sum $C=
\mathbb F I\otimes 1+ C'$ is direct. 
The vector space $C'$ has basis $\lbrace I \otimes (t^n-1) \rbrace_{0 \not=n \in \mathbb Z}$. 
\medskip

\noindent
There exists a surjective Lie algebra homomorphism $\varepsilon : L(\mathfrak{gl}_2) \to \mathbb F$ that sends $u \otimes \varphi \mapsto {\rm tr}(u) \gamma(\varphi)$ for all $u \in \mathfrak{gl}_2 $ and $\varphi \in \mathbb F\lbrack t, t^{-1} \rbrack$.
 For $x \in L(\mathfrak{gl}_2)$ the scalar
$\varepsilon(x)$ is described as follows:
\begin{align} \label{eq:xe}
x = \begin{pmatrix} a(t) & b(t) \\ c(t) & d(t) \end{pmatrix}, \qquad \quad \varepsilon(x) = a(1)+d(1).
\end{align}
\noindent The homomorphism $\varepsilon$ sends
\begin{align*}
&I \otimes t^n \mapsto 2, \qquad \qquad
e\otimes t^n \mapsto 0, \qquad \qquad 
f\otimes t^n \mapsto 0, \qquad \qquad
h \otimes t^n \mapsto 0
\end{align*}
for $n \in \mathbb Z$. Comparing \eqref{eq:xtheta}, \eqref{eq:xsigma}, \eqref{eq:xdagger} with \eqref{eq:xe}, we find that the following diagrams commute:
\begin{align*}
{\begin{CD}
L(\mathfrak{gl}_2) @>\varepsilon  >> \mathbb F
              \\
         @V \theta VV                   @VV {\rm id}V \\
 L(\mathfrak{gl}_2) @>>\varepsilon >
                                 \mathbb F
                        \end{CD}}  	
         \qquad \qquad                
    {\begin{CD}
L(\mathfrak{gl}_2)  @>\varepsilon  >> \mathbb F
              \\
         @V \sigma VV                   @VV {\rm id} V \\
 L(\mathfrak{gl}_2) @>>\varepsilon >
                                 \mathbb F
                        \end{CD}}  	     \qquad \qquad                
   {\begin{CD}
L(\mathfrak{gl}_2)  @>\varepsilon  >> \mathbb F
              \\
         @V \dagger VV                   @VV {\rm id} V \\
 L(\mathfrak{gl}_2) @>>\varepsilon >
                                 \mathbb F
                        \end{CD}}  	                                   		    
\end{align*}
\noindent  Let $\mathcal L$ denote the kernel of $\varepsilon$.  The sum $L(\mathfrak{gl}_2) = \mathbb F I\otimes 1 + \mathcal L$ is direct, and both summands are ideals of $L(\mathfrak{gl}_2)$.
The  sum $\mathcal L = C' + L(\mathfrak{sl}_2)$ is direct, and both summands are ideals of
$\mathcal L$.
By the above commuting diagrams, the Lie algebra $\mathcal L$ is invariant under $\theta$, $\sigma$, $\dagger$.

\begin{definition}\label{def:OO} \rm Let $\mathcal O$ denote the Lie algebra consisting of the elements in $\mathcal L$ that are fixed by $\theta$. 
As we will see, there exists a surjective Lie algebra homomorphism $\mathcal O \to O$ whose kernel is the center of $\mathcal O$.
We call $\mathcal O$ the {\it alternating central extension of $O$}.
\end{definition}

 \begin{lemma} \label{lem:aaa} The Lie algebra $\mathcal O$ is invariant under $\sigma$ and $\dagger$.
 \end{lemma}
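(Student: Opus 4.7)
The plan is to mimic the proof of Lemma \ref{lem:aa} essentially verbatim, but with one additional ingredient beyond commutativity of the maps with $\theta$: we need to know that $\mathcal L$ itself is preserved by $\sigma$ and $\dagger$. Both pieces have already been established earlier in the excerpt.

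More precisely, let $x \in \mathcal O$, so $x \in \mathcal L$ and $\theta(x) = x$. I would first invoke the sentence just before Definition \ref{def:OO}, which says that $\mathcal L$ is invariant under $\theta$, $\sigma$, $\dagger$; this gives $\sigma(x), \dagger(x) \in \mathcal L$. Next, I would use the fact (stated at the end of Section~2) that the three maps $\theta, \sigma, \dagger$ mutually commute as self-maps of $L(\mathfrak{gl}_2)$. This yields
\begin{align*}
\theta(\sigma(x)) = \sigma(\theta(x)) = \sigma(x), \qquad \theta(\dagger(x)) = \dagger(\theta(x)) = \dagger(x),
\end{align*}
so both $\sigma(x)$ and $\dagger(x)$ lie in $\mathcal L$ and are fixed by $\theta$, hence belong to $\mathcal O$.

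There is no real obstacle here; the only thing to remember is that invariance of $\mathcal O$ is a two-condition check (membership in $\mathcal L$ plus fixed by $\theta$), whereas the earlier Lemma \ref{lem:aa} only had to verify the $\theta$-fixed condition because $O$ sat inside $L(\mathfrak{sl}_2)$, which is automatically preserved. So the proof amounts to one sentence citing the mutual commutativity of $\theta, \sigma, \dagger$ together with the invariance of $\mathcal L$ under $\sigma$ and $\dagger$.
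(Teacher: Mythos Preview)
Your proof is correct and follows the same approach as the paper, which gives the one-line justification ``Since $\sigma$ and $\dagger$ commute with $\theta$.'' You are actually more careful than the paper in explicitly noting that invariance of $\mathcal L$ under $\sigma$ and $\dagger$ must also be invoked (the paper leaves this implicit, having just stated it before Definition~\ref{def:OO}).
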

 \begin{proof} Since  $\sigma$ and $\dagger$ commute with $\theta$.
 \end{proof}
 
\noindent By Lemma \ref{lem:aaa} and the construction,
 the restriction of $\sigma$ (resp. $\dagger$)  to $\mathcal O$ is an automorphism (resp. antiautomorphism) of the Lie algebra $\mathcal O$.
 \medskip

\noindent Our next goal is to display a basis for $\mathcal O$.

 \begin{definition}\label{def:matricesACE} For $k \in \mathbb Z$ define
 \begin{align} \label{eq:matsACE}
\mathcal A_k =\begin{pmatrix} 0 & t^k\\ t^{-k} & 0 \end{pmatrix}, \qquad \quad
\mathcal B_k =  \begin{pmatrix} t^k-1 & 0\\ 0 & t^{-k}-1 \end{pmatrix}.
\end{align}
 \end{definition}
\noindent Referring to Definition \ref{def:matricesACE}, we have $\mathcal B_0=0$.

\begin{lemma} 
\label{def:onsACE} 
The elements
 $\lbrace \mathcal A_k\rbrace_{k \in \mathbb Z}$, $\lbrace \mathcal B_{k} \rbrace_{0 \not=k \in \mathbb Z}$ form a basis for $\mathcal O$. Moreover for $k,\ell \in \mathbb Z$,
\begin{align}
\lbrack \mathcal A_k, \mathcal A_\ell \rbrack &= \mathcal B_{k-\ell}-\mathcal B_{\ell-k},      
 \label{eq:ComAAACE}
\\
\lbrack \mathcal B_k, \mathcal A_{\ell} \rbrack &= A_{k+\ell}- A_{\ell-k}, 
\label{eq:ComAGACE}
\\
\lbrack \mathcal B_k, \mathcal B_\ell \rbrack & = 0.
\label{eq:ComGGACE}
\end{align}
\end{lemma}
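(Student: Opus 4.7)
The plan is to proceed exactly in parallel with the proof of Lemma \ref{def:ons}: first characterize the matrices that belong to $\mathcal O$, then read off a basis, then verify each of \eqref{eq:ComAAACE}--\eqref{eq:ComGGACE} by direct matrix multiplication. There is no deep content here; the work is mostly bookkeeping.

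For the basis claim, write a general element of $L(\mathfrak{gl}_2)$ as $x=\bigl(\begin{smallmatrix} a(t)& b(t)\\ c(t) & d(t)\end{smallmatrix}\bigr)$. Using \eqref{eq:xtheta}, the condition $\theta(x)=x$ becomes $d(t)=a(t^{-1})$ and $c(t)=b(t^{-1})$. Using \eqref{eq:xe}, the condition $x\in\mathcal L$ becomes $a(1)+d(1)=0$, which under the previous conditions reduces to $a(1)=0$, i.e.\ $a(t)\in \mathbb F\lbrack t,t^{-1}\rbrack'$. Thus an element of $\mathcal O$ is determined by an arbitrary pair $(a,b)$ with $a\in\mathbb F\lbrack t,t^{-1}\rbrack'$ and $b\in\mathbb F\lbrack t,t^{-1}\rbrack$. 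Since $\{t^k\}_{k\in\mathbb Z}$ is a basis of $\mathbb F\lbrack t,t^{-1}\rbrack$ and $\{t^k-1\}_{0\neq k\in\mathbb Z}$ is a basis of $\mathbb F\lbrack t,t^{-1}\rbrack'$, and since $\mathcal A_k$ corresponds to the pair $(0,t^k)$ while $\mathcal B_k$ corresponds to the pair $(t^k-1,0)$, the assertion about the basis follows.

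For the commutation relations, I would compute the three products in turn. For \eqref{eq:ComAAACE}, one finds
\[
\mathcal A_k \mathcal A_\ell = \begin{pmatrix} t^{k-\ell} & 0 \\ 0 & t^{\ell-k}\end{pmatrix},
\]
and subtracting the analogous product with $k,\ell$ swapped gives a diagonal matrix whose $(1,1)$ entry is $t^{k-\ell}-t^{\ell-k}$, which matches $\mathcal B_{k-\ell}-\mathcal B_{\ell-k}$ since the $-1$'s cancel. For \eqref{eq:ComAGACE}, the products $\mathcal B_k\mathcal A_\ell$ and $\mathcal A_\ell\mathcal B_k$ are off-diagonal; after subtracting the $(1,2)$ entry becomes $t^{k+\ell}-t^{\ell-k}$ (the two $t^\ell$ contributions cancel) and similarly for the $(2,1)$ entry, giving $A_{k+\ell}-A_{\ell-k}$ (note $A=\mathcal A$ as matrices). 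For \eqref{eq:ComGGACE}, both $\mathcal B_k$ and $\mathcal B_\ell$ are diagonal, so they commute.

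The only mildly tricky point, which I would flag, is that the parameter ranges in the basis must be correct: the $\mathcal A_k$'s are indexed by all of $\mathbb Z$ because the full space $\mathbb F\lbrack t,t^{-1}\rbrack$ appears in the off-diagonal slot, while the $\mathcal B_k$'s are indexed by $\mathbb Z\setminus\{0\}$ because only the codimension-one subspace $\mathbb F\lbrack t,t^{-1}\rbrack'$ appears on the diagonal, and $\mathcal B_0=0$ would be redundant anyway. Apart from keeping the two index sets straight, every step is immediate, so I do not anticipate any real obstacle.
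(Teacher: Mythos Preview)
Your proposal is correct and follows exactly the approach sketched in the paper's proof, which simply says the basis assertion is ``readily checked using \eqref{eq:xtheta}, \eqref{eq:xe} and Definition~\ref{def:OO}'' and the relations are ``verified using Definition~\ref{def:matricesACE} and matrix multiplication.'' You have faithfully expanded those two sentences into the explicit bookkeeping, with no deviation in method.
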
 
\begin{proof}The first assertion is readily checked using \eqref{eq:xtheta}, \eqref{eq:xe} and Definition \ref{def:OO}. The relations
 \eqref{eq:ComAAACE}--\eqref{eq:ComGGACE} are verified using Definition \ref{def:matricesACE} and matrix multiplication.
 \end{proof}

\noindent Next we clarify how the elements $\lbrace A_k\rbrace_{k \in \mathbb Z}$, $\lbrace B_{k} \rbrace_{k\in \mathbb Z}$ from Definition \ref{def:matrices}
are related to the elements  $\lbrace \mathcal A_k\rbrace_{k \in \mathbb Z}$, $\lbrace \mathcal B_{k} \rbrace_{k \in \mathbb Z}$ from Definition \ref{def:matricesACE}.
\begin{lemma} \label{lem:AABB} For $k \in \mathbb Z$,
\begin{align*}
A_k = \mathcal A_k, \qquad \qquad B_k = (\mathcal B_k - \mathcal B_{-k})/2.
\end{align*}
\end{lemma}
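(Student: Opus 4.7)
The plan is to verify both equalities by direct matrix computation, comparing Definition \ref{def:matrices} with Definition \ref{def:matricesACE}. There is no real obstacle here; the lemma is essentially an unpacking of the two definitions.

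First, the equality $A_k = \mathcal{A}_k$ is immediate, since the right-hand side of \eqref{eq:mats} defining $A_k$ is identical to the right-hand side of \eqref{eq:matsACE} defining $\mathcal{A}_k$, namely the off-diagonal matrix with entries $t^k$ and $t^{-k}$.

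For the second equality, I would compute
\[
\mathcal{B}_k - \mathcal{B}_{-k}
= \begin{pmatrix} t^k - 1 & 0 \\ 0 & t^{-k} - 1 \end{pmatrix}
- \begin{pmatrix} t^{-k} - 1 & 0 \\ 0 & t^{k} - 1 \end{pmatrix}
= (t^k - t^{-k}) \begin{pmatrix} 1 & 0 \\ 0 & -1 \end{pmatrix}.
\]
Dividing both sides by $2$ and comparing with the definition of $B_k$ in \eqref{eq:mats} gives $B_k = (\mathcal{B}_k - \mathcal{B}_{-k})/2$, as desired. The only step worth flagging is confirming the sign conventions from Definition \ref{def:matricesACE} (in particular that $\mathcal{B}_{-k}$ has diagonal entries $t^{-k}-1$ and $t^k-1$), but this is routine. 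The identity $B_0 = 0$ from the remark after Definition \ref{def:matrices} is consistent with $\mathcal{B}_0 = 0$ from the remark after Definition \ref{def:matricesACE}, providing a sanity check at $k=0$.
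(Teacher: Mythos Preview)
Your proof is correct and follows exactly the approach the paper takes: the paper's proof simply says ``Use \eqref{eq:mats} and \eqref{eq:matsACE},'' and you have written out that comparison explicitly.
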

\begin{proof} Use \eqref{eq:mats} and  \eqref{eq:matsACE}.
\end{proof}

\begin{lemma}\label{lem:aaa1} For $k \in \mathbb Z$
the automorphism $\sigma$ sends
\begin{align*}
\mathcal A_k \mapsto \mathcal A_{1-k}, \qquad \qquad \mathcal B_k \mapsto  \mathcal B_{-k},
\end{align*}
and the antiautomorphism $\dagger$  sends
\begin{align*}
\mathcal A_k \mapsto \mathcal A_k, \qquad \qquad \mathcal B_k \mapsto \mathcal B_{-k}.
\end{align*}
\end{lemma}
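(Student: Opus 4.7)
The plan is to mimic the proof of Lemma \ref{lem:aa1}: apply the matrix descriptions \eqref{eq:xsigma} and \eqref{eq:xdagger} of $\sigma$ and $\dagger$ directly to the explicit matrix representatives of $\mathcal A_k$ and $\mathcal B_k$ given in Definition \ref{def:matricesACE}, and verify that the resulting matrices coincide with the claimed images.

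Concretely, for the $\sigma$ computation I would write $\mathcal A_k = \bigl(\begin{smallmatrix} 0 & t^k \\ t^{-k} & 0 \end{smallmatrix}\bigr)$, so that in the notation of \eqref{eq:xsigma} we have $a(t)=d(t)=0$, $b(t)=t^k$, $c(t)=t^{-k}$. Then \eqref{eq:xsigma} yields
\[
\sigma(\mathcal A_k)=\begin{pmatrix} 0 & t^{-k}\cdot t \\ t^{k}\cdot t^{-1} & 0\end{pmatrix}=\begin{pmatrix} 0 & t^{1-k} \\ t^{-(1-k)} & 0\end{pmatrix}=\mathcal A_{1-k}.
\]
For $\mathcal B_k = \bigl(\begin{smallmatrix} t^k-1 & 0 \\ 0 & t^{-k}-1 \end{smallmatrix}\bigr)$, the off-diagonal entries are zero, so \eqref{eq:xsigma} just swaps the diagonal entries, giving $\mathcal B_{-k}$. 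The $\dagger$ computation is similar: by \eqref{eq:xdagger}, $\dagger$ transposes and then substitutes $t\mapsto t^{-1}$. Applied to $\mathcal A_k$ this leaves it unchanged (the off-diagonal entries $t^k,t^{-k}$ are swapped by transpose and then each inverted by $t\mapsto t^{-1}$, undoing the swap), and applied to the diagonal $\mathcal B_k$ it just substitutes $t\mapsto t^{-1}$, giving $\mathcal B_{-k}$.

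There is no real obstacle here — the result is a direct matrix computation parallel to Lemma \ref{lem:aa1}. The only tiny point worth noting is to verify that the output of $\sigma$ and $\dagger$ still lies in $\mathcal O$, but this is automatic from Lemma \ref{lem:aaa} combined with the fact that $\mathcal A_k,\mathcal B_k\in\mathcal O$ by Lemma \ref{def:onsACE}, so no separate check is needed.
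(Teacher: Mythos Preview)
Your proposal is correct and follows exactly the paper's approach: the paper's proof simply says the assertions are verified using Definition \ref{def:matricesACE} together with \eqref{eq:xsigma} and \eqref{eq:xdagger}, which is precisely the direct matrix computation you carry out in detail.
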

\begin{proof} The first assertion is verified using Definition \ref{def:matricesACE} and \eqref{eq:xsigma}. The second assertion is verified using Definition \ref{def:matricesACE} and \eqref{eq:xdagger}.
\end{proof}

\noindent Next we consider the center of the Lie algebra $\mathcal O$.
\begin{definition}\label{def:ZZ}
\rm Let $\mathcal Z$ denote the center of $\mathcal O$.
\end{definition}
\begin{lemma} \label{def:ZZ1} For  $k\in \mathbb Z$ we have
\begin{align*}
\mathcal B_k+ \mathcal B_{-k} = I \otimes (t^k + t^{-k}-2).
\end{align*}
Moreover, $\mathcal B_k+ \mathcal B_{-k} \in \mathcal Z$.
\end{lemma}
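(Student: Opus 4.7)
The plan is to verify both assertions by direct computation and then invoke the general fact that $I\otimes \mathbb F\lbrack t,t^{-1}\rbrack$ sits inside the center of $L(\mathfrak{gl}_2)$.

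For the first assertion, I would simply add the two matrices from Definition \ref{def:matricesACE}. Since
\[
\mathcal B_k = \begin{pmatrix} t^k-1 & 0 \\ 0 & t^{-k}-1 \end{pmatrix},
\qquad
\mathcal B_{-k} = \begin{pmatrix} t^{-k}-1 & 0 \\ 0 & t^{k}-1 \end{pmatrix},
\]
adding entrywise gives $(t^k+t^{-k}-2)I$, which is exactly $I\otimes (t^k+t^{-k}-2)$. This is a one-line matrix calculation with no obstacle.

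For the second assertion, I would argue in two parts. First, that $\mathcal B_k+\mathcal B_{-k}$ lies in $\mathcal O$: by Lemma \ref{def:onsACE}, each of $\mathcal B_k$ and $\mathcal B_{-k}$ already belongs to $\mathcal O$ (with the convention $\mathcal B_0=0$), so their sum does too. Alternatively one can read this off the first assertion directly: the right-hand side is fixed by $\theta$ because $I\otimes t^n\mapsto I\otimes t^{-n}$ under $\theta$, and it lies in $\mathcal L=\ker\varepsilon$ because $\varepsilon(I\otimes t^n)=2$ for every $n$, giving $\varepsilon(I\otimes(t^k+t^{-k}-2))=2+2-4=0$.

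Second, that it is central in $\mathcal O$: any element of the form $I\otimes\varphi$ satisfies
\[
\lbrack I\otimes\varphi,\;u\otimes\phi\rbrack
= \lbrack I,u\rbrack\otimes\varphi\phi = 0
\]
for every $u\in\mathfrak{gl}_2$ and $\phi\in\mathbb F\lbrack t,t^{-1}\rbrack$, so $I\otimes\varphi$ is central in all of $L(\mathfrak{gl}_2)$. Since $\mathcal O\subseteq L(\mathfrak{gl}_2)$, it is in particular central in $\mathcal O$, which by Definition \ref{def:ZZ} means it belongs to $\mathcal Z$. There is no substantial obstacle: the entire statement reduces to the observation that the diagonal entries of $\mathcal B_k$ and $\mathcal B_{-k}$ swap under $k\mapsto -k$, combined with the scalar-matrix-is-central principle for loop algebras.
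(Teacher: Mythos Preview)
Your proof is correct and takes essentially the same approach as the paper. The paper's own argument is just ``The first assertion is verified using \eqref{eq:matsACE}. The second assertion is clear,'' and your proposal simply unpacks both of these steps: the matrix addition for the first, and the observation that $I\otimes\varphi$ is central in $L(\mathfrak{gl}_2)$ (hence in $\mathcal O$) for the second.
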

\begin{proof} The first assertion is verified using \eqref{eq:matsACE}. The second assertion is clear.
\end{proof}

\begin{lemma}  \label{lem:ZZZ} The following {\rm (i)--(iv)} hold.
\begin{enumerate}
\item[\rm (i)] The vector space $O$ has a basis
\begin{align}\label{eq:Obasis2}
&\mathcal A_k, \quad k \in \mathbb Z;
\qquad \qquad 
(\mathcal B_k - \mathcal B_{-k})/2, \quad k\in \mathbb N^+.
\end{align}
\item[\rm (ii)] The vector space $\mathcal Z$ has a basis
\begin{align} \label{eq:Zbasis}
(\mathcal B_k + \mathcal B_{-k})/2, \qquad  k\in \mathbb N^+.
\end{align}
\item[\rm (iii)] The sum $\mathcal O= O + \mathcal Z$ is direct.
\item[\rm (iv)] $O$ is an ideal of $\mathcal O$.
\end{enumerate}
\end{lemma}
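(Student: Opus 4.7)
Part (i) is immediate: by Lemma \ref{def:ons} the elements $\{A_k\}_{k\in\mathbb Z}$ and $\{B_k\}_{k\in\mathbb N^+}$ form a basis of $O$, and Lemma \ref{lem:AABB} gives $A_k=\mathcal A_k$ and $B_k=(\mathcal B_k-\mathcal B_{-k})/2$, so substituting yields the claim.

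For (ii), the centrality of each $(\mathcal B_k+\mathcal B_{-k})/2$ is Lemma \ref{def:ZZ1}, and linear independence is inherited from the $\mathcal O$-basis of Lemma \ref{def:onsACE}. The nontrivial point is spanning. Take an arbitrary $z\in\mathcal Z$ and write it in the basis of Lemma \ref{def:onsACE} as $z=\sum_{k\in\mathbb Z}\alpha_k\mathcal A_k+\sum_{0\ne k\in\mathbb Z}\beta_k\mathcal B_k$ with finitely many nonzero coefficients. The condition $[z,\mathcal B_\ell]=0$ for $\ell\ne 0$, combined with \eqref{eq:ComAGACE} and $A_m=\mathcal A_m$, forces $\alpha_{m+\ell}=\alpha_{m-\ell}$ for all $m$ and all $\ell\ne 0$; taking $\ell=1$ gives $\alpha_k=\alpha_{k+2}$, and since only finitely many $\alpha_k$ are nonzero, this forces all $\alpha_k=0$. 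With the $\mathcal A$-part gone, the condition $[z,\mathcal A_\ell]=0$ and \eqref{eq:ComAGACE} yield $\beta_k-\beta_{-k}=0$ for all $k\ne 0$, so $z=\sum_{k\in\mathbb N^+}\beta_k(\mathcal B_k+\mathcal B_{-k})$. Thus the listed elements span $\mathcal Z$.

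For (iii), by parts (i) and (ii) the union of the two bases is
\begin{align*}
\{\mathcal A_k\}_{k\in\mathbb Z},\qquad \bigl\{(\mathcal B_k-\mathcal B_{-k})/2\bigr\}_{k\in\mathbb N^+},\qquad \bigl\{(\mathcal B_k+\mathcal B_{-k})/2\bigr\}_{k\in\mathbb N^+}.
\end{align*}
Since $\mathcal B_k$ and $\mathcal B_{-k}$ are recovered by adding and subtracting the two symmetric/antisymmetric combinations, this union is precisely a basis of $\mathcal O$ (comparing with Lemma \ref{def:onsACE}); hence $\mathcal O=O+\mathcal Z$ with $O\cap\mathcal Z=0$.

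Part (iv) is then a one-line consequence: by (iii), $[\mathcal O,O]=[O+\mathcal Z,O]=[O,O]+[\mathcal Z,O]=[O,O]\subseteq O$, where $[\mathcal Z,O]=0$ by the definition of the center. The main (and only real) obstacle is verifying the centralizer calculation in (ii); everything else reduces to bookkeeping with the already-established bases and commutation relations.
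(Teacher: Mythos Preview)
Your proof is correct. The main difference from the paper lies in part (ii). The paper argues indirectly: letting $\mathcal Z^\vee$ be the span of the elements \eqref{eq:Zbasis}, it first establishes the direct sum $\mathcal O=O\oplus\mathcal Z^\vee$ by the basis comparison (your part (iii) argument), then invokes Lemma~\ref{lem:centerO} to get $\mathcal Z\cap O=0$, and combines this with $\mathcal Z^\vee\subseteq\mathcal Z$ (Lemma~\ref{def:ZZ1}) to sandwich $\mathcal Z^\vee=\mathcal Z$. Your approach instead computes the centralizer directly, reading off the coefficient constraints from \eqref{eq:ComAGACE}. Your route is more hands-on and self-contained (it does not appeal to Lemma~\ref{lem:centerO}; indeed, your $\alpha_k=0$ step essentially reproves that the center of $O$ is trivial), while the paper's route is shorter and reuses the structural fact already on record. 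Parts (i), (iii), (iv) are essentially identical to the paper's treatment.
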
 
\begin{proof} (i) By Lemmas \ref{def:ons}, \ref{lem:AABB}.
\\ (ii), (iii) Let $\mathcal Z^\vee$ denote the subspace of $\mathcal O$ spanned by \eqref{eq:Zbasis}. The sum $\mathcal O= O + \mathcal Z^\vee$ is direct, by the first assertion of Lemma \ref{def:onsACE} together with 
\eqref{eq:Obasis2}, \eqref{eq:Zbasis}.  We have $\mathcal Z \cap O=0$ by Lemma  \ref{lem:centerO}, and $\mathcal Z^\vee \subseteq \mathcal Z$ by Lemma
\ref{def:ZZ1}. By these comments and linear algebra, we obtain $\mathcal Z^\vee=\mathcal Z$. The results follow.
\\ (iv) By (iii) above, together with the fact that $O$ is a Lie subalgebra of $\mathcal O$ and  $\lbrack O, \mathcal Z\rbrack=0$.
\end{proof}

\begin{lemma} \label{lem:centerFix}
The automorphism $\sigma$ and the antiautomorphism $\dagger$ fix everything in $\mathcal Z$.
\end{lemma}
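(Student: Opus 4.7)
The plan is to combine Lemma \ref{lem:ZZZ}(ii), which gives an explicit basis for $\mathcal Z$, with Lemma \ref{lem:aaa1}, which describes the action of $\sigma$ and $\dagger$ on the elements $\mathcal B_k$. Since a linear map and a linear antiautomorphism are determined by their action on a basis, it suffices to check that both $\sigma$ and $\dagger$ fix each basis vector $(\mathcal B_k + \mathcal B_{-k})/2$ for $k \in \mathbb N^+$.

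First I would invoke Lemma \ref{lem:ZZZ}(ii) to reduce the claim to showing that $\sigma$ and $\dagger$ each fix $(\mathcal B_k + \mathcal B_{-k})/2$ for all $k \in \mathbb N^+$. Then, applying Lemma \ref{lem:aaa1}, we have $\sigma(\mathcal B_k) = \mathcal B_{-k}$ and $\sigma(\mathcal B_{-k}) = \mathcal B_k$, so by linearity
\begin{align*}
\sigma\bigl((\mathcal B_k + \mathcal B_{-k})/2\bigr) = (\mathcal B_{-k} + \mathcal B_k)/2 = (\mathcal B_k + \mathcal B_{-k})/2.
\end{align*}
The same computation, using the second assertion of Lemma \ref{lem:aaa1}, shows that $\dagger$ fixes $(\mathcal B_k + \mathcal B_{-k})/2$.

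There is no real obstacle here; the work has already been done in Lemmas \ref{lem:aaa1} and \ref{lem:ZZZ}. The only thing worth noting is that because $\dagger$ is an antiautomorphism rather than an automorphism, one should in principle remark that its action on a linear combination of basis elements is still given by applying it termwise (antiautomorphisms of Lie algebras are, by definition, linear isomorphisms), so linear extension from a basis is legitimate. With that observation in place, the proof is a one-line verification on each basis vector.
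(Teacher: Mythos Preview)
Your proposal is correct and matches the paper's own proof exactly: the paper simply cites Lemma \ref{lem:aaa1} and Lemma \ref{lem:ZZZ}(ii), which is precisely the combination you spell out.
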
 
\begin{proof} By Lemma \ref{lem:aaa1} and Lemma \ref{lem:ZZZ}(ii).
\end{proof}

\begin{lemma} \label{lem:Ogen} The Lie algebra $\mathcal O$ is generated by $\mathcal W_0$, $\mathcal W_1$, $\mathcal Z$.
\end{lemma}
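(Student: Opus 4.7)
The plan is to reduce everything to the decomposition $\mathcal O = O + \mathcal Z$ already established in Lemma \ref{lem:ZZZ}(iii) and to the classical Dolan/Grady presentation of $O$ from Lemma \ref{def:OA}.

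First I would pin down the generators $\mathcal W_0, \mathcal W_1$. Following the convention of Definition \ref{def:WWGG} (where $W_0 = A_0$ and $W_1 = A_1$) and its analog in the alternating setting, $\mathcal W_0$ and $\mathcal W_1$ should be $\mathcal A_0$ and $\mathcal A_1$ respectively. By Lemma \ref{lem:AABB} we have $\mathcal A_0 = A_0 = W_0$ and $\mathcal A_1 = A_1 = W_1$, so in particular $\mathcal W_0, \mathcal W_1 \in O \subseteq \mathcal O$.

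Next I would observe that the Lie subalgebra of $\mathcal O$ generated by $\mathcal W_0$ and $\mathcal W_1$ is contained in $O$, because $O$ is a Lie subalgebra of $\mathcal O$ (it is the image of the inclusion $O \hookrightarrow \mathcal O$ used throughout) and contains both generators. In the other direction, these generators satisfy the Dolan/Grady relations \eqref{eq:DG}, since they do so already inside $O$, so Lemma \ref{def:OA} shows that the Lie subalgebra of $O$ (and hence of $\mathcal O$) that they generate is all of $O$. Therefore $\langle \mathcal W_0, \mathcal W_1 \rangle = O$ inside $\mathcal O$.

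To finish, I would bring in $\mathcal Z$. Since $\mathcal Z$ is central in $\mathcal O$, taking iterated brackets with the elements of $\mathcal Z$ produces nothing new, so the Lie subalgebra generated by $\mathcal W_0$, $\mathcal W_1$ and $\mathcal Z$ is exactly
\[
\langle \mathcal W_0, \mathcal W_1 \rangle + \mathcal Z = O + \mathcal Z,
\]
which equals $\mathcal O$ by Lemma \ref{lem:ZZZ}(iii). There is no real obstacle; the only delicate point is the identification $\mathcal W_0 = \mathcal A_0$, $\mathcal W_1 = \mathcal A_1$, which places the two distinguished generators in $O$ so that the Dolan/Grady presentation of $O$ can be invoked verbatim.
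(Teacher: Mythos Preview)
Your proposal is correct and follows essentially the same approach as the paper: the paper's proof is simply ``By Lemma \ref{def:OA} and Lemma \ref{lem:ZZZ}(iii),'' which is exactly the decomposition $\mathcal O = O + \mathcal Z$ together with the fact that $O$ is generated by $W_0 = \mathcal W_0$ and $W_1 = \mathcal W_1$. Your write-up just unpacks these citations, including the identification of $\mathcal W_0, \mathcal W_1$ with $W_0, W_1$ (which in the paper's numbering is more directly Lemma \ref{lem:WWGGcom} than Lemma \ref{lem:AABB}, but either route works).
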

\begin{proof} By Lemma \ref{def:OA} and Lemma  \ref{lem:ZZZ}(iii).
\end{proof}

\noindent In Lemma \ref{def:onsACE} we gave a basis for $\mathcal O$. Next we display another basis for $\mathcal O$.

\begin{definition}\label{def:WWGGACE}
\rm For $k \in \mathbb N$ define
\begin{align}
\label{eq:WWACE}
&\mathcal W_{-k} = \biggl(\frac{t+t^{-1}}{2} \biggr)^k \begin{pmatrix} 0 &1 \\ 1 & 0 \end{pmatrix}, \qquad \quad 
\mathcal W_{k+1} = \biggl(\frac{t+t^{-1}}{2} \biggr)^k \begin{pmatrix} 0 &t \\ t^{-1} & 0 \end{pmatrix}, \\
&
\mathcal G_{k+1} = \biggl(\frac{t+t^{-1}}{2} \biggr)^k \begin{pmatrix} 2t-2&0 \\ 0 & 2t^{-1}-2 \end{pmatrix},
\label{eq:GACE}
\\
&
\mathcal {\tilde G}_{k+1} = \biggl(\frac{t+t^{-1}}{2} \biggr)^k \begin{pmatrix} 2t^{-1}-2&0 \\ 0 & 2t-2 \end{pmatrix}.
\label{eq:tGACE}
\end{align}
\end{definition}

\begin{lemma} \label{lem:OA3} The elements 
\begin{align}\label{eq:basisWWGG}
\lbrace \mathcal W_{-k}\rbrace_{k \in \mathbb N}, \quad 
\lbrace \mathcal W_{k+1}\rbrace_{k \in \mathbb N}, \quad
\lbrace \mathcal G_{k+1}\rbrace_{k \in \mathbb N}, \quad
 \lbrace \mathcal {\tilde G}_{k+1}\rbrace_{k \in \mathbb N}
 \end{align}
form a basis for the vector space $\mathcal O$. Moreover for $k, \ell \in \mathbb N$,
\begin{align}
&\lbrack \mathcal W_{-k}, \mathcal W_{\ell+1} \rbrack =(\mathcal {\tilde G}_{k+\ell+1}-\mathcal G_{k+\ell+1})/2, \label{eq:ACEcom1}
\\
&\lbrack \mathcal W_{-\ell}, \mathcal G_{k+1} \rbrack = 
\lbrack \mathcal {\tilde G}_{k+1}, \mathcal W_{-\ell} \rbrack = 4 \mathcal W_{-k-\ell-1} - 4 \mathcal W_{k+\ell+1}, \label{eq:ACEcom2}
\\
&\lbrack \mathcal G_{k+1}, \mathcal W_{\ell+1} \rbrack = 
\lbrack \mathcal W_{\ell+1}, \mathcal {\tilde G}_{k+1} \rbrack = 4 \mathcal W_{k+\ell+2} - 4 \mathcal W_{-k-\ell}, \label{eq:ACEcom3}
\\
&\lbrack \mathcal W_{-k}, \mathcal W_{-\ell} \rbrack =0, \qquad 
\lbrack \mathcal W_{k+1}, \mathcal W_{\ell+1} \rbrack=0, \qquad
 \lbrack \mathcal {G}_{k+1}, \mathcal {G}_{\ell+1} \rbrack=0,
 \label{eq:ACEcom456}
 \\
 &
 \lbrack \mathcal {G}_{k+1}, \mathcal {\tilde G}_{\ell+1} \rbrack=0, \qquad 
 \lbrack \mathcal {\tilde G}_{k+1}, \mathcal {\tilde G}_{\ell+1} \rbrack=0.
 \label{eq:ACEcom78}
 \end{align}
 \end{lemma}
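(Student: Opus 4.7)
The plan is threefold: verify membership in $\mathcal O$, establish the basis property via the decomposition $\mathcal O = O \oplus \mathcal Z$ from Lemma \ref{lem:ZZZ}(iii), and derive each commutation relation by reducing to the corresponding bracket in Lemma \ref{lem:OA2}. For the first step, the scalar factor $\bigl(\frac{t+t^{-1}}{2}\bigr)^k$ in Definition \ref{def:WWGGACE} is $\tau$-invariant and each $2\times 2$ matrix is fixed by the involution of \eqref{eq:xtheta}, so every element of \eqref{eq:basisWWGG} is $\theta$-fixed; applying \eqref{eq:xe}, each diagonal entry of $\mathcal G_{k+1},\mathcal{\tilde G}_{k+1}$ vanishes at $t=1$, and $\mathcal W_{-k},\mathcal W_{k+1}$ have zero diagonals, so $\varepsilon$ kills each of them and they all lie in $\mathcal O$.

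For the basis property, compare Definitions \ref{def:WWGG} and \ref{def:WWGGACE}: we have $\mathcal W_{-k}=W_{-k}$ and $\mathcal W_{k+1}=W_{k+1}$, and direct matrix computation yields the two key identities
\begin{align*}
\mathcal{\tilde G}_{k+1}-\mathcal G_{k+1} = 2\tilde G_{k+1}, \qquad
\mathcal{\tilde G}_{k+1}+\mathcal G_{k+1} = 2\bigl(\tfrac{t+t^{-1}}{2}\bigr)^k(t+t^{-1}-2)\,I.
\end{align*}
The first identity places $\tilde G_{k+1}$ in the span of \eqref{eq:basisWWGG}, so by Lemma \ref{lem:OA2} this span contains $O$; the second exhibits $z_k := (\mathcal{\tilde G}_{k+1}+\mathcal G_{k+1})/2$ as a central element of $\mathcal O$, and the family $\{z_k\}_{k \in \mathbb N}$ is a basis of $\mathcal Z$ because both $\{z_k\}$ and the basis $\{I\otimes(t^n+t^{-n}-2)/2\}_{n \in \mathbb N^+}$ from Lemma \ref{def:ZZ1} and Lemma \ref{lem:ZZZ}(ii) consist of polynomials in $s := t+t^{-1}$ vanishing at $s=2$, with matching degrees, so each spans the same ideal. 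Hence \eqref{eq:basisWWGG} spans $O+\mathcal Z = \mathcal O$, and the change of basis to the union of the bases from Lemmas \ref{lem:OA2} and \ref{lem:ZZZ}(ii) is invertible (a $\begin{pmatrix}1 & 1\\ -1 & 1\end{pmatrix}$ block in the $(\tilde G_{k+1}, z_k)$ vs.\ $(\mathcal{\tilde G}_{k+1},\mathcal G_{k+1})$ coordinates), giving the basis claim.

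For the commutation relations, write $\mathcal G_{k+1} = -\tilde G_{k+1}+z_k$ and $\mathcal{\tilde G}_{k+1} = \tilde G_{k+1}+z_k$ with $z_k \in \mathcal Z$ central. By bilinearity of the bracket and centrality of $z_k$, every bracket of two elements of \eqref{eq:basisWWGG} collapses to a bracket of the corresponding $O$-parts, which is supplied by \eqref{eq:com1}--\eqref{eq:com456}; the result is rewritten using $\tilde G_{n+1} = (\mathcal{\tilde G}_{n+1}-\mathcal G_{n+1})/2$. Concretely, \eqref{eq:ACEcom1} is immediate from \eqref{eq:com1}; \eqref{eq:ACEcom2} and \eqref{eq:ACEcom3} follow from \eqref{eq:com2}, \eqref{eq:com3} applied with the two signs from $\pm\tilde G_{k+1}$; and \eqref{eq:ACEcom456}, \eqref{eq:ACEcom78} hold because each corresponding $O$-bracket vanishes by \eqref{eq:com456}.

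The main obstacle is spotting the two decomposition identities for $\mathcal{\tilde G}_{k+1}\pm\mathcal G_{k+1}$; once these are in hand, the basis claim and every commutation relation reduce mechanically to Lemma \ref{lem:OA2} and Lemma \ref{lem:ZZZ}, with no further case-by-case matrix multiplication required.
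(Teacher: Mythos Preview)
Your proof is correct, but it follows a genuinely different route from the paper's. The paper proves Lemma~\ref{lem:OA3} by direct computation: the basis assertion is checked from the explicit description of $\theta$-fixed elements in $\mathcal L$ via \eqref{eq:xtheta} and \eqref{eq:xe}, and every commutation relation is verified by matrix multiplication from Definition~\ref{def:WWGGACE}. You instead leverage structure already established: the direct sum $\mathcal O = O \oplus \mathcal Z$ from Lemma~\ref{lem:ZZZ}(iii), the $O$-basis and brackets from Lemma~\ref{lem:OA2}, and the identities $\mathcal{\tilde G}_{k+1}\pm\mathcal G_{k+1}=2\tilde G_{k+1}$, $2z_k$ to split each generator into an $O$-part and a central part. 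Your degree argument in $s=t+t^{-1}$ that $\{z_k\}_{k\in\mathbb N}$ is a basis of $\mathcal Z$ is independent of Lemma~\ref{lem:ZZZACE} (which in the paper comes later and is proved analogously to Lemma~\ref{lem:ZZZ}), so there is no circularity. The paper's approach is self-contained and does not rely on Lemmas~\ref{lem:OA2} or~\ref{lem:ZZZ}, at the cost of repeating matrix computations; your approach avoids all bracket computations by reducing to \eqref{eq:com1}--\eqref{eq:com456}, at the cost of depending on those earlier lemmas and on the explicit identification of $\mathcal Z$.
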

 \begin{proof}
 The first assertion is readily checked using \eqref{eq:xtheta}, \eqref{eq:xe} and Definition \ref{def:OO}. The relations
  \eqref{eq:ACEcom1}--\eqref{eq:ACEcom78}
 are verified using Definition \ref{def:WWGGACE} and matrix multiplication.
 \end{proof}

\noindent Next we clarify how the elements $\lbrace W_{-k}\rbrace_{k \in \mathbb N}$,
$\lbrace W_{k+1}\rbrace_{k \in \mathbb N}$,
$\lbrace {\tilde G}_{k+1} \rbrace_{k\in \mathbb N}$ from Definition \ref{def:WWGG}
are related to the elements
 $\lbrace \mathcal W_{-k}\rbrace_{k \in \mathbb N}$,
$\lbrace \mathcal W_{k+1}\rbrace_{k \in \mathbb N}$,
$\lbrace \mathcal G_{k+1} \rbrace_{k\in \mathbb N}$,
$\lbrace \mathcal {\tilde G}_{k+1} \rbrace_{k\in \mathbb N}$
from Definition \ref{def:WWGGACE}.

\begin{lemma} \label{lem:WWGGcom} For $k \in \mathbb N$,
\begin{align*}
W_{-k} = \mathcal W_{-k}, \qquad \qquad 
W_{k+1} = \mathcal W_{k+1}, \qquad \qquad 
{\tilde G}_{k+1} = (\mathcal {\tilde G}_{k+1} - \mathcal G_{k+1})/2.
\end{align*}
\end{lemma}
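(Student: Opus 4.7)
The plan is to prove the identities by a direct comparison of the matrix expressions given in Definitions \ref{def:WWGG} and \ref{def:WWGGACE}. Nothing deeper is needed because both sides of each equation are already written out as explicit $2 \times 2$ matrices with entries in $\mathbb F\lbrack t,t^{-1}\rbrack$.

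First, I would observe that the formulas for $W_{-k}$ in Definition \ref{def:WWGG} and for $\mathcal W_{-k}$ in Definition \ref{def:WWGGACE} are literally identical: both equal $\bigl(\tfrac{t+t^{-1}}{2}\bigr)^{k}\bigl(\begin{smallmatrix}0&1\\1&0\end{smallmatrix}\bigr)$. Thus $W_{-k}=\mathcal W_{-k}$ is immediate, and the same observation gives $W_{k+1}=\mathcal W_{k+1}$.

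For the third identity, I would compute $\mathcal{\tilde G}_{k+1}-\mathcal G_{k+1}$ by subtracting the diagonal matrices in \eqref{eq:GACE} and \eqref{eq:tGACE}. The $(1,1)$-entry becomes $(2t^{-1}-2)-(2t-2)=2(t^{-1}-t)$ and the $(2,2)$-entry becomes $(2t-2)-(2t^{-1}-2)=2(t-t^{-1})$. Dividing by $2$ yields the diagonal matrix $\bigl(\begin{smallmatrix}t^{-1}-t&0\\0&t-t^{-1}\end{smallmatrix}\bigr)$, and multiplying by the common prefactor $\bigl(\tfrac{t+t^{-1}}{2}\bigr)^{k}$ reproduces exactly the expression for $\tilde G_{k+1}$ in \eqref{eq:tG}.

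There is no conceptual obstacle here; the lemma is an entry-by-entry verification, and I would simply record it as such. The only reason to state it at all is bookkeeping: it pins down the precise relationship between the basis of $O$ in \eqref{eq:basisWWG} and the basis of $\mathcal O$ in \eqref{eq:basisWWGG}, which will be used later when comparing the presentation of $O$ in Lemma \ref{lem:Olong} with the forthcoming presentation of $\mathcal O$.
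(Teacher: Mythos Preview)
Your proof is correct and follows exactly the same approach as the paper, which simply says to compare Definitions~\ref{def:WWGG} and~\ref{def:WWGGACE}. You have merely written out the details of that comparison explicitly.
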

\begin{proof} Compare Definition \ref{def:WWGG} and Definition \ref{def:WWGGACE}.
\end{proof}

\begin{lemma} \label{lem:aaWG1}
For $k \in \mathbb N$, the automorphism $\sigma$ sends
\begin{align*}
\mathcal W_{-k} \mapsto \mathcal W_{k+1}, 
\qquad \quad 
\mathcal W_{k+1} \mapsto \mathcal W_{-k}, \quad \qquad 
\mathcal {\tilde G}_{k+1} \mapsto \mathcal {G}_{k+1}, \quad \qquad 
\mathcal {G}_{k+1} \mapsto \mathcal {\tilde G}_{k+1}, 
\end{align*}
and the antiautomorphism $\dagger$ sends
\begin{align*}
\mathcal W_{-k} \mapsto \mathcal W_{-k}, 
\qquad \quad 
\mathcal W_{k+1} \mapsto \mathcal W_{k+1}, \quad \qquad 
\mathcal {\tilde G}_{k+1} \mapsto \mathcal {G}_{k+1}, \qquad \quad
\mathcal {G}_{k+1} \mapsto \mathcal {\tilde G}_{k+1}.
\end{align*}
\end{lemma}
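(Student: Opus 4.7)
The plan is to verify both assertions by direct matrix computation, using the explicit forms of $\mathcal W_{-k}, \mathcal W_{k+1}, \mathcal G_{k+1}, \mathcal{\tilde G}_{k+1}$ from Definition \ref{def:WWGGACE} together with the coordinate descriptions of $\sigma$ in \eqref{eq:xsigma} and of $\dagger$ in \eqref{eq:xdagger}. The scalar prefactor $\bigl((t+t^{-1})/2\bigr)^k$ appearing in each of the four generators is the palindromic Laurent polynomial $\bigl((t+t^{-1})/2\bigr)^k I$; since this is central in ${\rm Mat}_2(\mathbb F\lbrack t,t^{-1}\rbrack)$ and invariant under $t\mapsto t^{-1}$, it is preserved by both $\sigma$ (which acts by conjugation by $\zeta$ without altering $t$) and $\dagger$ (which fixes palindromic Laurent polynomials). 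Hence in each of the eight verifications it suffices to track what happens to the $2\times 2$ matrix part.

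First I would handle $\sigma$. Writing $\mathcal W_{-k} = \bigl((t+t^{-1})/2\bigr)^k\,\bigl(\begin{smallmatrix}0&1\\1&0\end{smallmatrix}\bigr)$, the formula \eqref{eq:xsigma} sends this to $\bigl((t+t^{-1})/2\bigr)^k\,\bigl(\begin{smallmatrix}0&t\\t^{-1}&0\end{smallmatrix}\bigr) = \mathcal W_{k+1}$, and a symmetric computation gives $\sigma(\mathcal W_{k+1}) = \mathcal W_{-k}$. For the diagonal generators, $\sigma$ simply swaps the two diagonal entries (with the appropriate $t$ factors becoming trivial since $b(t)=c(t)=0$), which exactly swaps the matrices in \eqref{eq:GACE} and \eqref{eq:tGACE}, giving $\sigma(\mathcal G_{k+1}) = \mathcal{\tilde G}_{k+1}$ and $\sigma(\mathcal{\tilde G}_{k+1}) = \mathcal G_{k+1}$.

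Next I would handle $\dagger$. Applied via \eqref{eq:xdagger}, the off-diagonal matrix in $\mathcal W_{-k}$ has both entries equal to the palindromic $\bigl((t+t^{-1})/2\bigr)^k$, so $\dagger$ fixes $\mathcal W_{-k}$. For $\mathcal W_{k+1}$, the off-diagonal entries $t$ and $t^{-1}$ get swapped and also individually sent via $t\mapsto t^{-1}$, yielding the same matrix back, so $\dagger(\mathcal W_{k+1})=\mathcal W_{k+1}$. For the diagonal generators, $\dagger$ sends each diagonal entry to its image under $t\mapsto t^{-1}$; this interchanges $2t-2$ with $2t^{-1}-2$ on each diagonal position, which is precisely the interchange between \eqref{eq:GACE} and \eqref{eq:tGACE}, giving $\dagger(\mathcal G_{k+1})=\mathcal{\tilde G}_{k+1}$ and $\dagger(\mathcal{\tilde G}_{k+1})=\mathcal G_{k+1}$.

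There is no real obstacle here; the argument is a sequence of eight one-line matrix computations. The only bookkeeping step worth emphasizing is the invariance of the scalar factor $\bigl((t+t^{-1})/2\bigr)^k$ under both maps, which isolates all the work to the four constant matrices featured in Definition \ref{def:WWGGACE}.
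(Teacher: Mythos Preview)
Your proposal is correct and follows the same approach as the paper's proof, which simply says to verify the first assertion using Definition~\ref{def:WWGGACE} and \eqref{eq:xsigma}, and the second using Definition~\ref{def:WWGGACE} and \eqref{eq:xdagger}. You have spelled out the matrix computations in more detail, including the useful observation that the palindromic scalar factor $\bigl((t+t^{-1})/2\bigr)^k$ is fixed by both maps, but the underlying argument is identical.
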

\begin{proof} The first assertion is verified using Definition \ref{def:WWGGACE} and \eqref{eq:xsigma}. The second assertion is verified using  Definition \ref{def:WWGGACE} and \eqref{eq:xdagger}.
\end{proof}

\noindent We return our attention to the center $\mathcal Z$ of $\mathcal O$.

\begin{lemma} For  $k\in \mathbb N$ we have
\begin{align*}
(\mathcal G_{k+1}+ \mathcal {\tilde G}_{k+1})/2 = I \otimes (t+t^{-1}-2)  \biggl(\frac{t+t^{-1}}{2} \biggr)^k.
\end{align*}
Moreover, $(\mathcal G_{k+1}+ \mathcal {\tilde G}_{k+1})/2 \in \mathcal Z$.
\end{lemma}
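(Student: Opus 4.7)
The first assertion is a direct matrix computation from Definition \ref{def:WWGGACE}. The plan is to add the two diagonal matrices appearing in the definitions of $\mathcal G_{k+1}$ and $\mathcal {\tilde G}_{k+1}$. The key observation is that these two diagonal matrices are transposes of each other: one has diagonal entries $(2t-2,\,2t^{-1}-2)$ and the other has $(2t^{-1}-2,\,2t-2)$. Therefore their sum has both diagonal entries equal to $2t+2t^{-1}-4 = 2(t+t^{-1}-2)$, making the sum a scalar multiple of the identity matrix. Dividing by $2$ and factoring out $I$ from the common scalar factor $(t+t^{-1}-2)\bigl(\tfrac{t+t^{-1}}{2}\bigr)^k$ gives exactly the claimed expression $I\otimes (t+t^{-1}-2)\bigl(\tfrac{t+t^{-1}}{2}\bigr)^k$.

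For the second assertion, I would argue as follows. By Lemma \ref{lem:OA3}, both $\mathcal G_{k+1}$ and $\mathcal{\tilde G}_{k+1}$ lie in $\mathcal O$, so their half-sum lies in $\mathcal O$ as well. By the first assertion just established, this half-sum lies in $C = I \otimes \mathbb F\lbrack t, t^{-1}\rbrack$, which is the center of $L(\mathfrak{gl}_2)$ (as noted in the opening paragraph of Section 4). Hence the element commutes with every element of $L(\mathfrak{gl}_2)$, and in particular with every element of the Lie subalgebra $\mathcal O$. This places it in the center $\mathcal Z$ of $\mathcal O$.

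The only thing to be a little careful about is the assertion that the half-sum actually lives in $\mathcal O$ (equivalently, in $\mathcal L$, i.e.\ that $\varepsilon$ vanishes on it). This follows either from membership of each summand in $\mathcal O$ via Lemma \ref{lem:OA3}, or directly by computing $\varepsilon$ on $I\otimes (t+t^{-1}-2)\bigl(\tfrac{t+t^{-1}}{2}\bigr)^k$: evaluating the Laurent polynomial at $t=1$ gives $0$ because of the factor $(t+t^{-1}-2)$, and then \eqref{eq:xe} yields $\varepsilon = 0$. There is no real obstacle here; the entire statement reduces to a one-line matrix addition plus the standard fact that scalar matrices are central in $L(\mathfrak{gl}_2)$.
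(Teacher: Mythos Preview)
Your proof is correct and follows essentially the same approach as the paper: the paper verifies the first assertion directly from \eqref{eq:GACE}, \eqref{eq:tGACE} and declares the second assertion ``clear,'' which is exactly the matrix addition and central-scalar-matrix observation you have written out in more detail.
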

\begin{proof} The first assertion is verified using \eqref{eq:GACE}, \eqref{eq:tGACE}.
The second assertion is clear.
\end{proof}

\begin{lemma}  \label{lem:ZZZACE} The following {\rm (i), (ii)} hold.
\begin{enumerate}
\item[\rm (i)] The vector space  $O$ has a basis
\begin{align}
&\mathcal W_{-k}, \qquad \mathcal W_{k+1}, 
\qquad 
(\mathcal {\tilde G}_{k+1} - \mathcal G_{k+1})/2, \qquad\qquad k\in \mathbb N.
\label{eq:GG2}
\end{align}
\item[\rm (ii)] The vector space $\mathcal Z$ has a basis
\begin{align}
(\mathcal {\tilde G}_{k+1} + \mathcal G_{k+1})/2, \qquad  k\in \mathbb N.
\label{eq:ZGG2}
\end{align}
\end{enumerate}
\end{lemma}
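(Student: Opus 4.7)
The plan is to mirror the proof of Lemma \ref{lem:ZZZ}, exploiting the parallel structure between the basis $\{A_k\},\{B_k\}$ and the new basis $\{\mathcal W_{-k}\},\{\mathcal W_{k+1}\},\{\mathcal G_{k+1}\},\{\mathcal{\tilde G}_{k+1}\}$.

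For part (i), I would argue by direct translation. By Lemma \ref{lem:OA2}, the elements $\{W_{-k}\}_{k\in\mathbb N}$, $\{W_{k+1}\}_{k\in\mathbb N}$, $\{\tilde G_{k+1}\}_{k\in\mathbb N}$ form a basis for $O$. By Lemma \ref{lem:WWGGcom}, these coincide exactly with the elements $\mathcal W_{-k}$, $\mathcal W_{k+1}$, $(\mathcal{\tilde G}_{k+1}-\mathcal G_{k+1})/2$ listed in \eqref{eq:GG2}. So (i) is immediate.

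For part (ii), let $\mathcal Z^\vee$ denote the subspace of $\mathcal O$ spanned by the elements in \eqref{eq:ZGG2}. The key observation is that replacing the basis pair $\mathcal G_{k+1},\mathcal{\tilde G}_{k+1}$ of Lemma \ref{lem:OA3} by the pair $(\mathcal{\tilde G}_{k+1}-\mathcal G_{k+1})/2, (\mathcal{\tilde G}_{k+1}+\mathcal G_{k+1})/2$ is an invertible change of basis. Hence, combining with part (i), the vector space $\mathcal O$ decomposes as a direct sum $\mathcal O = O + \mathcal Z^\vee$. The preceding lemma in the paper gives $\mathcal Z^\vee \subseteq \mathcal Z$, so it remains to show the reverse inclusion.

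For that reverse inclusion, I would use Lemma \ref{lem:centerO}: if $x \in \mathcal Z \cap O$, then $x$ lies in $O$ and commutes with everything in $\mathcal O$, hence in particular with everything in $O$, so $x=0$. Now given any $z \in \mathcal Z$, write $z = o + v$ with $o \in O$ and $v \in \mathcal Z^\vee$. Since $v \in \mathcal Z$ and $\mathcal Z$ is a subspace, $o = z - v \in \mathcal Z \cap O = 0$, forcing $z = v \in \mathcal Z^\vee$. Thus $\mathcal Z = \mathcal Z^\vee$, and the spanning set \eqref{eq:ZGG2} is a basis since it is already linearly independent (being part of the transformed basis of $\mathcal O$). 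There is no real obstacle here; the argument is a straightforward bookkeeping exercise once the change-of-basis observation is in hand, so I expect the only place to be careful is verifying that the sum and difference halves indeed generate the same span as the original pair, which is automatic in characteristic zero.
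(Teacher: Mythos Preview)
Your proposal is correct and follows essentially the same approach as the paper, which simply says ``Similar to the proof of Lemma~\ref{lem:ZZZ}.'' You have accurately unpacked what that similarity means: part (i) follows from Lemmas~\ref{lem:OA2} and~\ref{lem:WWGGcom} (the analogues of Lemmas~\ref{def:ons} and~\ref{lem:AABB}), and part (ii) proceeds via the direct sum $\mathcal O = O + \mathcal Z^\vee$, the containment $\mathcal Z^\vee \subseteq \mathcal Z$ from the preceding lemma, and $\mathcal Z \cap O = 0$ from Lemma~\ref{lem:centerO}.
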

\begin{proof}
Similar to the proof of Lemma  \ref{lem:ZZZ}.
\end{proof} 

\noindent In the next two results, we clarify how $O$ and $\mathcal O$ are related.

\begin{lemma} \label{lem:inj} The following {\rm (i)--(iv)} hold:
\begin{enumerate}
\item[\rm (i)] 
there exists a unique Lie algebra homomorphism $\iota: O\to \mathcal O$ that sends $W_0 \mapsto \mathcal W_0$ and $W_1 \mapsto \mathcal W_1$;
\item[\rm (ii)] the map $\iota$ is injective;
\item[\rm (iii)] the map $\iota$ sends
\begin{align*}
A_k \mapsto \mathcal A_k, \qquad \qquad B_k \mapsto (\mathcal B_k - \mathcal B_{-k})/2
\end{align*}
for $k \in \mathbb Z$.
\item[\rm (iv)] the map $\iota$ sends
\begin{align*}
W_{-k} \mapsto \mathcal W_{-k}, \qquad
W_{k+1} \mapsto \mathcal W_{k+1}, \qquad
\tilde G_{k+1} \mapsto (\mathcal {\tilde G}_{k+1} - \mathcal G_{k+1})/2
\end{align*}
\noindent for $k \in \mathbb N$.
\end{enumerate}
\end{lemma}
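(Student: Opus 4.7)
The plan is to use the Dolan/Grady presentation of $O$ to produce $\iota$, then recognize $\iota$ as the literal inclusion $O\hookrightarrow \mathcal O$ arising from the matrix realizations, and finally read off (iii) and (iv) from earlier lemmas.

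For (i), I would invoke Lemma \ref{def:OA}, which presents $O$ on generators $W_0,W_1$ subject to the Dolan/Grady relations \eqref{eq:DG}. To construct a homomorphism $\iota:O\to \mathcal O$ sending $W_0\mapsto \mathcal W_0$ and $W_1\mapsto \mathcal W_1$, it suffices to verify that $\mathcal W_0,\mathcal W_1$ satisfy \eqref{eq:DG} inside $\mathcal O$. Comparing Definition \ref{def:WWGG} with Definition \ref{def:WWGGACE} at $k=0$, the matrices $\mathcal W_0$ and $\mathcal W_1$ agree with $W_0$ and $W_1$ as elements of $L(\mathfrak{gl}_2)$, and both lie in $L(\mathfrak{sl}_2)$. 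Since the bracket of $\mathcal O$ is the restriction of the bracket of $L(\mathfrak{gl}_2)$, while \eqref{eq:DG} already holds for $W_0,W_1$ inside $O\subseteq L(\mathfrak{sl}_2)$, the same relations hold for $\mathcal W_0,\mathcal W_1$ in $\mathcal O$. Existence of $\iota$ follows, and uniqueness is automatic because $W_0,W_1$ generate $O$.

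Next, for (ii) I would observe that $O$ sits inside $\mathcal O$ as a Lie subalgebra. Indeed, formula \eqref{eq:xe} shows that $\varepsilon$ vanishes on every element of $L(\mathfrak{sl}_2)$ (since traces of $\mathfrak{sl}_2$ matrices are zero), so $L(\mathfrak{sl}_2)\subseteq \mathcal L$; as the automorphism $\theta$ acts in the same way on both, every $\theta$-fixed element of $L(\mathfrak{sl}_2)$ is a $\theta$-fixed element of $\mathcal L$, giving $O\subseteq \mathcal O$. The resulting inclusion $O\hookrightarrow \mathcal O$ is a Lie algebra homomorphism that sends $W_0\mapsto \mathcal W_0$ and $W_1\mapsto \mathcal W_1$ by the matrix equalities noted above, so the uniqueness in (i) forces $\iota$ to be this inclusion, and in particular $\iota$ is injective.

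Finally, for (iii) and (iv), since $\iota$ is the inclusion, I just need to rewrite each basis element of $O$ as an element of $\mathcal O$. Lemma \ref{lem:AABB} gives $A_k=\mathcal A_k$ and $B_k=(\mathcal B_k-\mathcal B_{-k})/2$, which is exactly (iii); Lemma \ref{lem:WWGGcom} gives $W_{-k}=\mathcal W_{-k}$, $W_{k+1}=\mathcal W_{k+1}$, and $\tilde G_{k+1}=(\mathcal{\tilde G}_{k+1}-\mathcal G_{k+1})/2$, which is exactly (iv). The only point requiring a (trivial) check is the inclusion $L(\mathfrak{sl}_2)\subseteq \mathcal L$ via \eqref{eq:xe}, so no substantive obstacle is expected.
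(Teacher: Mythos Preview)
Your proof is correct and follows essentially the same approach as the paper: recognize $\iota$ as the inclusion $O\hookrightarrow \mathcal O$ (already implicit from $L(\mathfrak{sl}_2)\subseteq \mathcal L$ and Lemma~\ref{lem:ZZZ}(iii),(iv)), then read off (iii) and (iv) from Lemmas~\ref{lem:AABB} and~\ref{lem:WWGGcom}, with uniqueness coming from Lemma~\ref{def:OA}. The only organizational difference is that you first construct $\iota$ via the presentation and then identify it with the inclusion, whereas the paper simply declares $\iota$ to be the inclusion from the outset; this is cosmetic.
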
 
\begin{proof}  The inclusion map $\iota: O\to \mathcal O$ is an injective Lie algebra homomorphism. The map $\iota$ satisfies (iii) by Lemma  \ref{lem:AABB}.
 The map $\iota$ satisfies (iv) by Lemma \ref{lem:WWGGcom}. The map $\iota$ sends
$W_0 \mapsto \mathcal W_0$ and $W_1 \mapsto \mathcal W_1$. The uniqueness assertion in (i) is satisfied because $O$ is generated by $W_0$, $W_1$ in view of Lemma  \ref{def:OA}.
\end{proof}

\begin{lemma}\label{lem:surj} The following {\rm (i)--(v)} hold:
\begin{enumerate}
\item[\rm (i)] there exists a unique  Lie algebra homomorphism $\rho: \mathcal O \to O$ that sends $\mathcal W_0 \mapsto W_0$ and $\mathcal W_1 \mapsto W_1$ and $\mathcal Z \mapsto 0$;
\item[\rm (ii)] the map $\rho$ is surjective;
\item[\rm (iii)] the map $\rho$ has kernel $\mathcal Z$;
\item[\rm (iv)] the map $\rho$ sends
$\mathcal A_k \mapsto A_k$ and $ \mathcal B_k \mapsto B_k$
for $k \in \mathbb Z$;
\item[\rm (v)] the map $\rho$ sends
\begin{align*}
\mathcal W_{-k} \mapsto W_{-k}, \qquad
\mathcal W_{k+1} \mapsto W_{k+1}, \qquad
\mathcal {\tilde G}_{k+1} \mapsto \tilde G_{k+1}, \qquad
\mathcal G_{k+1} \mapsto -\tilde G_{k+1}
\end{align*}
\noindent for $k \in \mathbb N$.
\end{enumerate}
\end{lemma}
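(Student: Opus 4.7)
The plan is to build $\rho$ as the projection of $\mathcal O$ onto $O$ along $\mathcal Z$, using the direct sum decomposition $\mathcal O = O + \mathcal Z$ from Lemma \ref{lem:ZZZ}(iii). Concretely, each $x \in \mathcal O$ has a unique expression $x = x_O + x_{\mathcal Z}$ with $x_O \in O$ and $x_{\mathcal Z} \in \mathcal Z$, and I would set $\rho(x) = x_O$. Since $\mathcal Z$ is the center of $\mathcal O$, for all $x,y \in \mathcal O$ the bracket collapses to $[x,y] = [x_O, y_O]$, which automatically lies in $O$ because $O$ is a Lie subalgebra of $\mathcal O$. This immediately gives $\rho([x,y]) = [x_O, y_O] = [\rho(x), \rho(y)]$, so $\rho$ is a Lie algebra homomorphism. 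Surjectivity in (ii) is trivial because $\rho$ restricts to the identity on $O$, and (iii) is also trivial because $\rho$ is the projection along $\mathcal Z$.

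For the defining conditions in (i), I would note that by Lemma \ref{lem:WWGGcom} the elements $\mathcal W_0, \mathcal W_1$ coincide with $W_0, W_1$ as elements of $\mathcal L$, hence lie in $O$ and are fixed by $\rho$; and $\rho(\mathcal Z) = 0$ by construction. For uniqueness, suppose $\rho'$ is another such homomorphism; then $\rho$ and $\rho'$ agree on $\mathcal W_0$, $\mathcal W_1$, and all of $\mathcal Z$. By Lemma \ref{lem:Ogen} these generate $\mathcal O$ as a Lie algebra, so $\rho' = \rho$.

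Parts (iv) and (v) amount to decomposing each named element of $\mathcal O$ into its $O$-part and $\mathcal Z$-part. Using Lemma \ref{lem:AABB} together with Lemma \ref{def:ZZ1}, I would write
\begin{align*}
\mathcal A_k = A_k, \qquad \mathcal B_k = \frac{\mathcal B_k - \mathcal B_{-k}}{2} + \frac{\mathcal B_k + \mathcal B_{-k}}{2} = B_k + z_k,
\end{align*}
where $z_k \in \mathcal Z$ by Lemma \ref{def:ZZ1}; applying $\rho$ yields (iv). For (v), Lemma \ref{lem:WWGGcom} gives $\mathcal W_{-k} = W_{-k}$ and $\mathcal W_{k+1} = W_{k+1}$ in $O$, while the identities
\begin{align*}
\mathcal{\tilde G}_{k+1} = \tilde G_{k+1} + \frac{\mathcal{\tilde G}_{k+1} + \mathcal G_{k+1}}{2}, \qquad \mathcal G_{k+1} = -\tilde G_{k+1} + \frac{\mathcal{\tilde G}_{k+1} + \mathcal G_{k+1}}{2}
\end{align*}
together with Lemma \ref{lem:ZZZACE}(ii) identify the second summand on each right-hand side as an element of $\mathcal Z$; applying $\rho$ yields the remaining formulas.

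There is no serious obstacle in this argument. The only subtle point is the verification that $\rho$ respects brackets, but this is immediate once one exploits that $\mathcal Z$ is central rather than merely an ideal; the rest of the lemma is a bookkeeping exercise that reads off each image from the explicit decompositions given by the earlier structure lemmas.
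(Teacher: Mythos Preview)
Your proposal is correct and follows essentially the same route as the paper: define $\rho$ as the projection onto $O$ along $\mathcal Z$ via the direct sum $\mathcal O = O + \mathcal Z$, then read off (ii)--(v) from Lemmas \ref{lem:AABB}, \ref{def:ZZ1}, \ref{lem:WWGGcom}, \ref{lem:ZZZACE}, and deduce uniqueness from Lemma \ref{lem:Ogen}. The only cosmetic difference is that the paper justifies the homomorphism property by citing that $\mathcal Z$ is an ideal, whereas you use the (equivalent here, since $\mathcal Z$ is the center) fact that $\mathcal Z$ is central.
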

\begin{proof} By Lemma  \ref{lem:ZZZ}(iii) the sum $\mathcal O = O+\mathcal Z$ is direct. So there exists an $\mathbb F$-linear map 
 $\rho: \mathcal O \to O$ that acts as the identity on $O$ and zero on $\mathcal Z$. The map $\rho$ is surjective and has kernel $\mathcal Z$. The map $\rho$ is a Lie
 algebra homomorphism, because $\mathcal Z$ is an ideal in $\mathcal O$.
 The map $\rho$ satisfies (iv), by  Lemma  \ref{lem:AABB} and Lemma   \ref{lem:ZZZ}.
The map $\rho$ satisfies (v), by  Lemma \ref{lem:WWGGcom} and Lemma \ref{lem:ZZZACE}. The map $\rho$ sends
$\mathcal W_0 \mapsto W_0$ and $\mathcal W_1 \mapsto W_1$ and $\mathcal Z \mapsto 0$. The uniqueness assertion in (i) is satisfied by Lemma \ref{lem:Ogen}. 
\end{proof}

\noindent In Lemma  \ref{lem:ZZZ} we saw the direct sum $\mathcal O = O+\mathcal Z$. Next, we give two more direct sum decompositions of $\mathcal O$.

\begin{lemma} \label{lem:OG} The following {\rm (i)--(iv)} hold:
\begin{enumerate}
\item[\rm (i)] 
there exists an abelian Lie subalgebra $G$ of $\mathcal O$ that has basis $\lbrace \mathcal {G}_{k+1} \rbrace_{k \in \mathbb N}$;
\item[\rm (ii)] the sum $\mathcal O = O + G$ is direct;
\item[\rm (iii)] 
there exists an abelian Lie subalgebra $\tilde G$ of $\mathcal O$ that has basis $\lbrace \mathcal {\tilde G}_{k+1} \rbrace_{k \in \mathbb N}$;
\item[\rm (iv)] the sum $\mathcal O = O + {\tilde G}$ is direct.
\end{enumerate}
\end{lemma}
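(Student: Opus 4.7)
My plan is to prove the four assertions by combining Lemma \ref{lem:OA3}, which gives the basis \eqref{eq:basisWWGG} of $\mathcal O$ together with the commutation relations \eqref{eq:ACEcom1}--\eqref{eq:ACEcom78}, and Lemma \ref{lem:ZZZACE}(i), which gives a basis of $O$ in terms of the same alternating generators. Since (iii), (iv) are entirely parallel to (i), (ii), I will really only do the work for $G$ and then indicate that the argument for $\tilde G$ is obtained by swapping the roles of $\mathcal G_{k+1}$ and $\mathcal {\tilde G}_{k+1}$ throughout (this swap is legitimate because the relations in Lemma \ref{lem:OA3} are symmetric in $\mathcal G$ and $\mathcal {\tilde G}$, as one sees from \eqref{eq:ACEcom2}--\eqref{eq:ACEcom78}).

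For (i), I would first note that the family $\{\mathcal G_{k+1}\}_{k\in\mathbb N}$ is part of the basis \eqref{eq:basisWWGG} of $\mathcal O$, hence linearly independent. Let $G$ denote its $\mathbb F$-linear span inside $\mathcal O$. The third relation in \eqref{eq:ACEcom456} gives $\lbrack \mathcal G_{k+1},\mathcal G_{\ell+1}\rbrack = 0$ for all $k,\ell\in\mathbb N$, so $G$ is closed under the Lie bracket and is abelian, with the displayed set as a basis.

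For (ii), I would combine the basis of $O$ from Lemma \ref{lem:ZZZACE}(i), namely $\{\mathcal W_{-k},\mathcal W_{k+1},(\mathcal {\tilde G}_{k+1}-\mathcal G_{k+1})/2\}_{k\in\mathbb N}$, with the basis $\{\mathcal G_{k+1}\}_{k\in\mathbb N}$ of $G$ from (i). An elementary change of basis (replace $(\mathcal {\tilde G}_{k+1}-\mathcal G_{k+1})/2$ and $\mathcal G_{k+1}$ by $\mathcal {\tilde G}_{k+1}$ and $\mathcal G_{k+1}$) shows that this union spans the same subspace as the basis \eqref{eq:basisWWGG} of $\mathcal O$ from Lemma \ref{lem:OA3}, and is itself linearly independent. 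Consequently the union is a basis for $\mathcal O$, which yields both $\mathcal O = O+G$ and $O\cap G = 0$, i.e.\ the sum is direct.

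Parts (iii) and (iv) follow by the same two steps with $\mathcal {\tilde G}_{k+1}$ in place of $\mathcal G_{k+1}$: the last relation in \eqref{eq:ACEcom78} gives that $\tilde G$ is abelian, and the analogous change of basis $\{(\mathcal {\tilde G}_{k+1}-\mathcal G_{k+1})/2,\ \mathcal {\tilde G}_{k+1}\}\leftrightarrow\{\mathcal {\tilde G}_{k+1},\ \mathcal G_{k+1}\}$ shows $\mathcal O = O\oplus\tilde G$. There is really no obstacle here; the only thing to be careful about is the bookkeeping of the change-of-basis matrix, which is a simple $2\times 2$ triangular transformation at each index $k$ and hence invertible.
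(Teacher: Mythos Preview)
Your proof is correct and follows essentially the same approach as the paper: both use Lemma~\ref{lem:OA3} for the linear independence and commutation of the $\mathcal G_{k+1}$, and both obtain the direct sum in (ii) via the change of basis between $\{\mathcal G_{k+1},\mathcal{\tilde G}_{k+1}\}$ and $\{(\mathcal{\tilde G}_{k+1}-\mathcal G_{k+1})/2,\mathcal G_{k+1}\}$ together with Lemma~\ref{lem:ZZZACE}(i). The only cosmetic difference is that for (iii), (iv) the paper applies the automorphism $\sigma$ from Lemma~\ref{lem:aaWG1} (which swaps $\mathcal G_{k+1}\leftrightarrow\mathcal{\tilde G}_{k+1}$ and $\mathcal W_{-k}\leftrightarrow\mathcal W_{k+1}$) to transport (i), (ii) directly, whereas you rerun the parallel argument by hand; both are valid and amount to the same symmetry observation.
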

\begin{proof} (i) By Lemma  \ref{lem:OA3}  the elements
 $\lbrace \mathcal {G}_{k+1} \rbrace_{k \in \mathbb N}$ are linearly independent, and 
 $\lbrack \mathcal {G}_{k+1}, \mathcal {G}_{\ell+1} \rbrack=0$ for $k, \ell \in \mathbb N$. The result follows.
 \\
 \noindent (ii) By Lemma \ref{lem:OA3} the elements 
\eqref{eq:basisWWGG} form a basis for $\mathcal O$. By this and linear algebra, the elements
\begin{align*}
\mathcal W_{-k}, \qquad
\mathcal W_{k+1}, \qquad
(\mathcal {\tilde G}_{k+1} - \mathcal G_{k+1})/2, \qquad 
\mathcal {G}_{k+1} \qquad \qquad k \in \mathbb N
\end{align*}
form a basis for $\mathcal O$. The result follows in view of Lemma  \ref{lem:ZZZACE}(i).
\\
\noindent (iii), (iv) Apply the automorphism $\sigma$ to everything in (i), (ii) above, and use Lemma \ref{lem:aaWG1}.
\end{proof}

\begin{lemma} \label{lem:moreGen}
The Lie algebra $\mathcal O$ is generated by $\mathcal W_0$, $\mathcal W_1$, $\lbrace \mathcal {G}_{k+1}\rbrace_{k\in \mathbb N}$ and also by
$\mathcal W_0$, $\mathcal W_1$, $\lbrace \mathcal {\tilde G}_{k+1}\rbrace_{k\in \mathbb N}$.
\end{lemma}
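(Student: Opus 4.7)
Let $\mathcal S$ denote the Lie subalgebra of $\mathcal O$ generated by $\mathcal W_0$, $\mathcal W_1$, and $\lbrace \mathcal G_{k+1}\rbrace_{k\in \mathbb N}$. The strategy is to show $\mathcal S$ contains every element of the basis \eqref{eq:basisWWGG}, and hence $\mathcal S=\mathcal O$; the second generating set will then follow by applying the automorphism $\sigma$.

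The first step is to produce $\mathcal W_{-k}$ and $\mathcal W_{k+1}$ inside $\mathcal S$ for all $k\in \mathbb N$ by induction on $k$. The base case $k=0$ is given. For the inductive step, specialize the relations in \eqref{eq:ACEcom2}, \eqref{eq:ACEcom3} to $\ell=0$, obtaining
\begin{align*}
\lbrack \mathcal W_0, \mathcal G_{k+1}\rbrack &= 4\mathcal W_{-k-1}-4\mathcal W_{k+1},\\
\lbrack \mathcal G_{k+1}, \mathcal W_1\rbrack &= 4\mathcal W_{k+2}-4\mathcal W_{-k}.
\end{align*}
Assuming $\mathcal W_{-k}$ and $\mathcal W_{k+1}$ lie in $\mathcal S$, the first identity places $\mathcal W_{-k-1}$ in $\mathcal S$ and the second places $\mathcal W_{k+2}$ in $\mathcal S$, completing the induction.

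Once every $\mathcal W_{-k}$ and $\mathcal W_{k+1}$ belongs to $\mathcal S$, the case $k=0$ of \eqref{eq:ACEcom1} gives
\begin{align*}
\mathcal{\tilde G}_{\ell+1} = \mathcal G_{\ell+1} + 2\lbrack \mathcal W_0, \mathcal W_{\ell+1}\rbrack \in \mathcal S
\end{align*}
for each $\ell\in \mathbb N$. Thus every element of the basis \eqref{eq:basisWWGG} belongs to $\mathcal S$, so by Lemma \ref{lem:OA3} we conclude $\mathcal S=\mathcal O$.

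For the second assertion, recall from Lemma \ref{lem:aaWG1} that $\sigma$ interchanges $\mathcal W_0\leftrightarrow \mathcal W_1$ and $\mathcal G_{k+1}\leftrightarrow \mathcal {\tilde G}_{k+1}$. Since $\sigma$ is an automorphism of $\mathcal O$, it maps the generating set $\lbrace \mathcal W_0,\mathcal W_1,\lbrace \mathcal G_{k+1}\rbrace_{k\in \mathbb N}\rbrace$ to $\lbrace \mathcal W_1,\mathcal W_0,\lbrace \mathcal {\tilde G}_{k+1}\rbrace_{k\in \mathbb N}\rbrace$, which therefore also generates $\mathcal O$. The only step that requires any care is the induction above; everything else is an immediate consequence of the commutation relations in Lemma \ref{lem:OA3}.
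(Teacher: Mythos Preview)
Your argument is correct. The induction using the $\ell=0$ specializations of \eqref{eq:ACEcom2} and \eqref{eq:ACEcom3} cleanly produces all $\mathcal W_{-k}$ and $\mathcal W_{k+1}$, and then \eqref{eq:ACEcom1} supplies the $\mathcal{\tilde G}_{\ell+1}$; the appeal to $\sigma$ for the second generating set is exactly right.

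The paper takes a more structural route: it invokes Lemma~\ref{lem:OG}, which has already established the direct sum $\mathcal O = O + G$ (and $\mathcal O = O + \tilde G$), and then observes that $O$ is generated as a Lie algebra by $\mathcal W_0,\mathcal W_1$ (via the Dolan/Grady presentation, Lemma~\ref{def:OA}) while $G$ is spanned by the $\mathcal G_{k+1}$. So the paper's proof is a one-liner that leans on prior structure theory, whereas yours is a self-contained computation directly from the bracket relations of Lemma~\ref{lem:OA3}. Your approach has the virtue of not needing Lemma~\ref{lem:OG} at all, and in fact the induction you carry out is essentially the same mechanism that later appears inside the proof of Lemma~\ref{lem:Compacta}; the paper's approach has the virtue of making transparent \emph{why} the generation holds, namely because $\mathcal O$ splits as the Onsager piece plus an abelian complement.
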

\begin{proof} By Lemma \ref{lem:OG} and since the Lie algebra $O$ is generated by $\mathcal W_0$, $\mathcal W_1$.
\end{proof}

\noindent Next, we give some presentations of $\mathcal O$ by generators and relations.

\begin{lemma} \label{lem:Compacta} The Lie algebra $\mathcal O$ has a presentation by generators $\mathcal W_0$, $\mathcal W_1$, $\lbrace \mathcal {G}_{k+1} \rbrace_{k\in \mathbb N}$ and
the following relations:
\begin{align}
&\lbrack \mathcal W_0, \lbrack \mathcal W_0, \lbrack \mathcal W_0, \mathcal W_1 \rbrack \rbrack \rbrack = 4 \lbrack \mathcal W_0, \mathcal W_1 \rbrack, \label{eq:G1}
\\
&\lbrack \mathcal W_1, \lbrack \mathcal W_1, \lbrack \mathcal W_1, \mathcal W_0 \rbrack \rbrack \rbrack = 4 \lbrack \mathcal W_1, \mathcal W_0 \rbrack, \label{eq:G2}
\\
&\lbrack \mathcal W_1, \lbrack \mathcal W_1, \mathcal W_0 \rbrack \rbrack  =  \lbrack \mathcal W_1, \mathcal {G}_1 \rbrack, \label{eq:G3}
\\
&\lbrack \lbrack \mathcal W_1, \mathcal W_0\rbrack, \mathcal W_0 \rbrack   =  \lbrack \mathcal {G}_1, \mathcal W_0 \rbrack, \label{eq:G4}
\\
&\lbrack \mathcal W_1, \lbrack \mathcal W_1, \lbrack \mathcal W_0, \mathcal {G}_k \rbrack \rbrack \rbrack = 4 \lbrack \mathcal W_1, \mathcal {G}_{k+1} \rbrack \qquad \qquad k\in \mathbb N^+, \label{eq:G5}
\\
&\lbrack \lbrack \lbrack \mathcal {G}_k,  \mathcal W_1 \rbrack, \mathcal W_0\rbrack, \mathcal W_0 \rbrack  = 4 \lbrack \mathcal {G}_{k+1}, \mathcal W_0 \rbrack \qquad \qquad k\in \mathbb N^+, \label{eq:G6}
\\
& \lbrack \mathcal {G}_{k+1}, \mathcal {G}_{\ell+1} \rbrack = 0\qquad \qquad k,\ell \in \mathbb N. \label{eq:G7}
\end{align}
\end{lemma}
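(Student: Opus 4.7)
The plan is to mimic the strategy used in the proof of Lemma~\ref{lem:Olong}. Let $\mathcal O^\vee$ denote the Lie algebra defined by the presentation in the statement. I would construct mutually inverse homomorphisms between $\mathcal O^\vee$ and $\mathcal O$. First, a direct calculation using Definition~\ref{def:WWGGACE} and matrix multiplication (equivalently, using the commutation relations \eqref{eq:ACEcom1}--\eqref{eq:ACEcom78} from Lemma~\ref{lem:OA3}) will verify that \eqref{eq:G1}--\eqref{eq:G7} hold in $\mathcal O$. Combined with Lemma~\ref{lem:moreGen}, which asserts that $\mathcal O$ is generated by $\mathcal W_0, \mathcal W_1$ and $\lbrace \mathcal G_{k+1}\rbrace_{k\in\mathbb N}$, this yields a surjective Lie algebra homomorphism $\beta:\mathcal O^\vee \to \mathcal O$ that sends each generator to its namesake.

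To show $\beta$ is injective, I would first observe that the Dolan/Grady relations \eqref{eq:G1}, \eqref{eq:G2} on $\mathcal W_0, \mathcal W_1$ in $\mathcal O^\vee$, combined with Lemma~\ref{def:OA}, produce a Lie algebra homomorphism $\phi_0:O\to \mathcal O^\vee$ with $W_i\mapsto \mathcal W_i$. Since $\beta\circ\phi_0$ agrees with $\iota$ from Lemma~\ref{lem:inj} on the generators $W_0, W_1$ of $O$ and $\iota$ is injective, $\phi_0$ is injective as well. Next I would consider the subspace $V=\phi_0(O)+\sum_{k\in\mathbb N}\mathbb F\,\mathcal G_{k+1}$ of $\mathcal O^\vee$ and argue it is a Lie subalgebra. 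Closure of $\phi_0(O)$ under brackets is automatic; relation \eqref{eq:G7} makes the $\mathcal G$-$\mathcal G$ brackets vanish; the crux is establishing $[\phi_0(O),\mathcal G_{k+1}]\subseteq \phi_0(O)$ for every $k$. By the Jacobi identity and the fact that $\phi_0(O)$ is generated as a Lie algebra by $\mathcal W_0, \mathcal W_1$, it suffices to show that $[\mathcal W_i,\mathcal G_{k+1}]\in \phi_0(O)$ for $i\in\lbrace 0,1\rbrace$ and all $k\in\mathbb N$. I would prove this by induction on $k$: the base case $k=0$ is immediate from \eqref{eq:G3}, \eqref{eq:G4}, which express these brackets as nested brackets in $\mathcal W_0, \mathcal W_1$ alone; the inductive step follows from \eqref{eq:G5}, \eqref{eq:G6}, whose right-hand sides are nested brackets in $\mathcal W_0, \mathcal W_1$ together with the innermost term $[\mathcal W_0,\mathcal G_k]$ or $[\mathcal G_k,\mathcal W_1]$, lying in $\phi_0(O)$ by hypothesis. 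Since $V$ then contains every generator of $\mathcal O^\vee$, we conclude $V=\mathcal O^\vee$.

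For the final step, I would deduce injectivity of $\beta$ from the direct sum decomposition $\mathcal O=O+G$ of Lemma~\ref{lem:OG}(ii). Given $v\in\ker\beta$, use $V=\mathcal O^\vee$ to write $v=\phi_0(w)+\sum_{k}c_k\mathcal G_{k+1}$; applying $\beta$ yields $\iota(w)+\sum_{k}c_k\mathcal G_{k+1}=0$ in $\mathcal O$, and the direct sum $\mathcal O=O+G$ together with the linear independence of $\lbrace \mathcal G_{k+1}\rbrace_{k\in\mathbb N}$ in $\mathcal O$ (from Lemma~\ref{lem:OA3}) forces $\iota(w)=0$ and all $c_k=0$. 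Injectivity of $\iota$ and $\phi_0$ then gives $v=0$. I expect the main obstacle to be the inductive closure step $[\phi_0(O),\mathcal G_{k+1}]\subseteq\phi_0(O)$: the recursive relations \eqref{eq:G5}, \eqref{eq:G6} must be unpacked carefully and combined with Jacobi to cover brackets with arbitrary elements of $\phi_0(O)$, not just the two distinguished generators.
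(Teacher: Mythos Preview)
Your proposal is correct and follows essentially the same route as the paper's proof: verify the relations hold in $\mathcal O$ to get a surjective map $\natural$ (your $\beta$), use the Dolan/Grady relations to get an injective map $\sharp:O\to\mathcal O^\vee$ (your $\phi_0$), show by induction on \eqref{eq:G3}--\eqref{eq:G6} that $[\mathcal W_i,\mathcal G_{k+1}]$ lies in the image of $\sharp$ so that $O^\vee+G^\vee=\mathcal O^\vee$, and then use the direct sum $\mathcal O=O+G$ from Lemma~\ref{lem:OG}(ii) to split the kernel and conclude injectivity. Your closing worry about extending from the generators $\mathcal W_0,\mathcal W_1$ to all of $\phi_0(O)$ is a non-issue: the set $\{x\in\phi_0(O):[x,\mathcal G_{k+1}]\in\phi_0(O)\}$ is a Lie subalgebra by Jacobi, so once it contains the two generators it equals $\phi_0(O)$.
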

\begin{proof} Define a Lie algebra $\mathcal O^\vee$ by the presentation in the lemma statement. We show that $\mathcal O^\vee$ is isomorphic to $\mathcal O$.
By matrix multiplication, the matrices \eqref{eq:WWACE}--\eqref{eq:tGACE} satisfy the relations \eqref{eq:G1}--\eqref{eq:G7}.
Therefore, there exists a Lie algebra homomorphism $\natural : \mathcal O^\vee \to \mathcal O$ that sends
\begin{align*}
\mathcal W_0 \mapsto \mathcal W_0, \qquad \quad
\mathcal W_{1} \mapsto \mathcal W_{1}, \qquad \quad
\mathcal {G}_{k+1} \mapsto \mathcal {G}_{k+1} \qquad k \in \mathbb N.
\end{align*}
 We show that $\natural$ is a bijection. The map $\natural$ is surjective by Lemma \ref{lem:moreGen}. We show that $\natural$ is injective.
Let $O^\vee$ denote the Lie subalgebra of $\mathcal O^\vee$
generated by $\mathcal W_0, \mathcal W_1$. 
For $k \in \mathbb N$ we obtain    $\lbrack \mathcal W_0, \mathcal G_{k+1} \rbrack\in O^\vee$ and $\lbrack \mathcal W_1, \mathcal G_{k+1} \rbrack\in O^\vee$
using \eqref{eq:G3}--\eqref{eq:G6} and induction on $k$. By this and since $\mathcal W_0$, $\mathcal W_1$ generate $O^\vee$, we obtain
$\lbrack O^\vee, \mathcal G_{k+1} \rbrack\subseteq O^\vee$ for $k \in \mathbb N$.
Let $G^\vee$ denote the subspace of $\mathcal O^\vee$
spanned by $\lbrace \mathcal {G}_{k+1} \rbrace_{k \in \mathbb N}$. 
By the above comments $\lbrack O^\vee, G^\vee \rbrack\subseteq O^\vee$.  We also have $\lbrack G^\vee, G^\vee\rbrack =0$ by \eqref{eq:G7},
so $O^\vee + G^\vee$ is a Lie subalgebra of $\mathcal O^\vee$. But $O^\vee$ and $G^\vee$ together generate $\mathcal O^\vee$, 
 so $O^\vee+G^\vee=\mathcal O^\vee$.
Let $K$ denote the kernel of $\natural$.
Recall from Lemma  \ref{lem:OG}(ii)  the direct sum $\mathcal O = O + G$.
The map $\natural$ sends $O^\vee$ onto $O$. By Lemma  \ref{lem:OG}(i) the map $\natural$ sends $G^\vee$ onto $G$.
By these comments $K=K_1+ K_2$, where $K_1$ is the kernel for the restriction of $\natural$ to $O^\vee$, and
$K_2$ is the kernel for the restriction of $\natural$ to $G^\vee$.
We show that $K_1=0$.
 The elements $\mathcal W_0$, $\mathcal W_1$ of $\mathcal O^\vee$
satisfy the Dolan/Grady relations \eqref{eq:G1}, \eqref{eq:G2}.
 By this and Lemma \ref{def:OA}, there exists a Lie algebra homomorphism $\sharp: O \to \mathcal O^\vee$ that sends $W_0 \mapsto \mathcal W_0$
 and $W_1 \mapsto \mathcal W_1$. The $\sharp$-image of $O$ is equal to $O^\vee$. The composition $\natural\circ \sharp$ is injective, 
 so the restriction of $\natural$ to $O^\vee$ is injective. Therefore $K_1=0$.
 We show that $K_2=0$. The vectors $\lbrace \natural(\mathcal G_{k+1}) \rbrace_{k \in \mathbb N}$ form the basis for $G$ given in Lemma \ref{lem:OG}(i).
 Therefore the restriction of $\natural$ to $G^\vee$ is injective, so  $K_2=0$.
 By the above comments $K=0$, so $\natural$ is injective. We have shown that $\natural$ is a bijection, and therefore a Lie algebra isomorphism.
\end{proof}

\begin{lemma} \label{lem:Compact} The Lie algebra $\mathcal O$ has a presentation by generators $\mathcal W_0$, $\mathcal W_1$, $\lbrace \mathcal {\tilde G}_{k+1} \rbrace_{k\in \mathbb N}$ and
the following relations:
\begin{align*}
&\lbrack \mathcal W_0, \lbrack \mathcal W_0, \lbrack \mathcal W_0, \mathcal W_1 \rbrack \rbrack \rbrack = 4 \lbrack \mathcal W_0, \mathcal W_1 \rbrack, 
\\
&\lbrack \mathcal W_1, \lbrack \mathcal W_1, \lbrack \mathcal W_1, \mathcal W_0 \rbrack \rbrack \rbrack = 4 \lbrack \mathcal W_1, \mathcal W_0 \rbrack, 
\\
&\lbrack \mathcal W_0, \lbrack \mathcal W_0, \mathcal W_1 \rbrack \rbrack  =  \lbrack \mathcal W_0, \mathcal {\tilde G}_1 \rbrack, 
\\
&\lbrack \lbrack \mathcal W_0, \mathcal W_1\rbrack, \mathcal W_1 \rbrack   =  \lbrack \mathcal {\tilde G}_1, \mathcal W_1 \rbrack, 
\\
&\lbrack \mathcal W_0, \lbrack \mathcal W_0, \lbrack \mathcal W_1, \mathcal {\tilde G}_k \rbrack \rbrack \rbrack = 4 \lbrack \mathcal W_0, \mathcal {\tilde G}_{k+1} \rbrack  \qquad \qquad k\in \mathbb N^+, 
\\
&\lbrack \lbrack \lbrack \mathcal {\tilde G}_k,  \mathcal W_0 \rbrack, \mathcal W_1\rbrack, \mathcal W_1 \rbrack  = 4 \lbrack \mathcal {\tilde G}_{k+1}, \mathcal W_1 \rbrack  \qquad \qquad k\in \mathbb N^+, 
\\
& \lbrack \mathcal {\tilde G}_{k+1}, \mathcal {\tilde G}_{\ell+1} \rbrack = 0\qquad \qquad k,\ell \in \mathbb N. 
\end{align*}
\end{lemma}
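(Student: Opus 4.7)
The plan is to deduce this presentation from Lemma \ref{lem:Compacta} by invoking the automorphism $\sigma$ of $\mathcal O$. Recall from Lemma \ref{lem:aaWG1} that $\sigma$ interchanges $\mathcal W_0 \leftrightarrow \mathcal W_1$ and $\mathcal G_{k+1} \leftrightarrow \mathcal{\tilde G}_{k+1}$ for every $k \in \mathbb N$. A line-by-line inspection shows that performing the substitutions $\mathcal W_0 \leftrightarrow \mathcal W_1$ and $\mathcal G_{k+1} \leftrightarrow \mathcal{\tilde G}_{k+1}$ in the seven relations \eqref{eq:G1}--\eqref{eq:G7} of Lemma \ref{lem:Compacta} produces, in matching order, the seven relations listed in the current statement.

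Let $\mathcal O^\vee$ denote the Lie algebra defined by the presentation of Lemma \ref{lem:Compacta}, and let $\mathcal O^{\vee\vee}$ denote the Lie algebra defined by the presentation of the present lemma. By the observation above together with the universal property of presentations, there is a Lie algebra homomorphism $\phi: \mathcal O^{\vee\vee} \to \mathcal O^\vee$ sending $\mathcal W_0 \mapsto \mathcal W_1$, $\mathcal W_1 \mapsto \mathcal W_0$, and $\mathcal{\tilde G}_{k+1} \mapsto \mathcal G_{k+1}$ for $k \in \mathbb N$; the analogous swap in the reverse direction yields an inverse to $\phi$, so $\phi$ is a Lie algebra isomorphism.

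Lemma \ref{lem:Compacta} supplies an isomorphism $\natural: \mathcal O^\vee \to \mathcal O$, and $\sigma$ is an automorphism of $\mathcal O$. The composition $\sigma \circ \natural \circ \phi : \mathcal O^{\vee\vee} \to \mathcal O$ then sends $\mathcal W_0 \mapsto \mathcal W_0$, $\mathcal W_1 \mapsto \mathcal W_1$, and $\mathcal{\tilde G}_{k+1} \mapsto \mathcal{\tilde G}_{k+1}$, which is precisely the natural evaluation map for the claimed presentation; being a composition of isomorphisms, it establishes the lemma.

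The principal obstacle is the purely mechanical check that the substitution $\mathcal W_0 \leftrightarrow \mathcal W_1$, $\mathcal G \leftrightarrow \mathcal{\tilde G}$ really does convert each of the seven relations of Lemma \ref{lem:Compacta} into the corresponding relation here; this is not conceptual, but care is required regarding the ordering of commutator entries in relations \eqref{eq:G3}--\eqref{eq:G6}. As a backup plan, one can repeat the proof strategy of Lemma \ref{lem:Compacta} verbatim with $\mathcal{\tilde G}$ in place of $\mathcal G$, relying on Lemma \ref{lem:OG}(iv) and the second half of Lemma \ref{lem:moreGen} in place of their $\mathcal G$-counterparts.
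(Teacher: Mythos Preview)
Your approach is correct and is exactly the paper's proof: apply the automorphism $\sigma$ (Lemma~\ref{lem:aaWG1}) to the presentation of Lemma~\ref{lem:Compacta}. Your write-up simply makes the transport-of-presentation argument more explicit; the only quibble is that the swap $\mathcal W_0\leftrightarrow\mathcal W_1$ interchanges \eqref{eq:G1} with \eqref{eq:G2}, so ``in matching order'' is slightly off, but this has no effect on the argument.
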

\begin{proof} Apply the automorphism $\sigma$ to everything in Lemma  \ref{lem:Compacta}, and use Lemma \ref{lem:aaWG1}.
\end{proof}

\begin{lemma} The Lie algebra $\mathcal O$
has a presentation 
by generators
\begin{align}
\label{eq:4gens}
\lbrace \mathcal W_{-k}\rbrace_{k \in \mathbb N}, \quad 
\lbrace \mathcal W_{k+1}\rbrace_{k \in \mathbb N}, \quad
\lbrace \mathcal G_{k+1}\rbrace_{k \in \mathbb N}, \quad
 \lbrace \mathcal {\tilde G}_{k+1}\rbrace_{k \in \mathbb N}
\end{align}
 and the following relations. For $k, \ell \in \mathbb N$,
\begin{align}
&
 \lbrack \mathcal W_0, \mathcal W_{k+1}\rbrack= 
\lbrack \mathcal W_{-k}, \mathcal W_{1}\rbrack=
({\mathcal{\tilde G}}_{k+1} - \mathcal G_{k+1})/2,
\label{eq:3p1}
\\
&
\lbrack \mathcal W_0, \mathcal G_{k+1}\rbrack= 
\lbrack {\mathcal{\tilde G}}_{k+1}, \mathcal W_{0}\rbrack= 
4 \mathcal W_{-k-1}-4
 \mathcal W_{k+1},
\label{eq:3p2}
\\
&
\lbrack \mathcal G_{k+1}, \mathcal W_{1}\rbrack= 
\lbrack \mathcal W_{1}, {\mathcal {\tilde G}}_{k+1}\rbrack= 
4\mathcal W_{k+2}-4 
 \mathcal W_{-k},
\label{eq:3p3}
\\
&
\lbrack \mathcal W_{-k}, \mathcal W_{-\ell}\rbrack=0,  \qquad 
\lbrack \mathcal W_{k+1}, \mathcal W_{\ell+1}\rbrack= 0,
\label{eq:3p4}
\\
&
\lbrack \mathcal W_{-k}, \mathcal W_{\ell+1}\rbrack+
\lbrack \mathcal W_{k+1}, \mathcal W_{-\ell}\rbrack= 0,
\label{eq:3p5}
\\
&
\lbrack \mathcal W_{-k}, \mathcal G_{\ell+1}\rbrack+
\lbrack \mathcal G_{k+1}, \mathcal W_{-\ell}\rbrack= 0,
\label{eq:3p6}
\\
&
\lbrack \mathcal W_{-k}, {\mathcal {\tilde G}}_{\ell+1}\rbrack+
\lbrack {\mathcal {\tilde G}}_{k+1}, \mathcal W_{-\ell}\rbrack= 0,
\label{eq:3p7}
\\
&
\lbrack \mathcal W_{k+1}, \mathcal G_{\ell+1}\rbrack+
\lbrack \mathcal  G_{k+1}, \mathcal W_{\ell+1}\rbrack= 0,
\label{eq:3p8}
\\
&
\lbrack \mathcal W_{k+1}, {\mathcal {\tilde G}}_{\ell+1}\rbrack+
\lbrack {\mathcal {\tilde G}}_{k+1}, \mathcal W_{\ell+1}\rbrack= 0,
\label{eq:3p9}
\\
&
\lbrack \mathcal G_{k+1}, \mathcal G_{\ell+1}\rbrack=0,
\qquad 
\lbrack {\mathcal {\tilde G}}_{k+1}, {\mathcal {\tilde G}}_{\ell+1}\rbrack= 0,
\label{eq:3p10}
\\
&
\lbrack {\mathcal {\tilde G}}_{k+1}, \mathcal G_{\ell+1}\rbrack+
\lbrack \mathcal G_{k+1}, {\mathcal {\tilde G}}_{\ell+1}\rbrack= 0.
\label{eq:3p11}
\end{align}
\end{lemma}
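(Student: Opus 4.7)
The plan is to define a Lie algebra $\mathcal O^\vee$ by the presentation in the lemma statement and to exhibit mutually inverse Lie algebra homomorphisms $\phi \colon \mathcal O^\vee \to \mathcal O$ and $\psi \colon \mathcal O \to \mathcal O^\vee$. To construct $\phi$, I would verify by direct matrix multiplication, using Definition \ref{def:WWGGACE}, that the elements $\{\mathcal W_{-k}\}, \{\mathcal W_{k+1}\}, \{\mathcal G_{k+1}\}, \{\mathcal {\tilde G}_{k+1}\}$ of $\mathcal O$ satisfy each of \eqref{eq:3p1}--\eqref{eq:3p11}. Most of these relations either coincide with a bracket computation already recorded in Lemma \ref{lem:OA3} or follow from one by antisymmetry of the Lie bracket. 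This yields a Lie algebra homomorphism $\phi \colon \mathcal O^\vee \to \mathcal O$ sending each generator to its namesake, and $\phi$ is surjective because its image contains the basis \eqref{eq:basisWWGG} of $\mathcal O$.

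For $\psi$, I would invoke the presentation of $\mathcal O$ from Lemma \ref{lem:Compacta} and show that the elements $\mathcal W_0, \mathcal W_1, \{\mathcal G_{k+1}\}_{k \in \mathbb N}$ inside $\mathcal O^\vee$ satisfy \eqref{eq:G1}--\eqref{eq:G7}. Relation \eqref{eq:G7} is immediate from \eqref{eq:3p10}. The relations \eqref{eq:G1}--\eqref{eq:G4} follow by combining the $k = 0$ instances of \eqref{eq:3p1}--\eqref{eq:3p3} with $[\mathcal W_0, \mathcal W_{-1}] = 0 = [\mathcal W_1, \mathcal W_2]$ from \eqref{eq:3p4}, together with the paired identities $[\mathcal G_{k+1}, \mathcal W_0] = -[\mathcal {\tilde G}_{k+1}, \mathcal W_0]$ and $[\mathcal W_1, \mathcal G_{k+1}] = -[\mathcal W_1, \mathcal {\tilde G}_{k+1}]$ extracted from \eqref{eq:3p2}, \eqref{eq:3p3}. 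For the parametric relations \eqref{eq:G5}, \eqref{eq:G6}, one first uses \eqref{eq:3p2} to rewrite $[\mathcal W_0, \mathcal G_k] = 4\mathcal W_{-k} - 4\mathcal W_k$, then applies $\mathrm{ad}(\mathcal W_1)$ twice, using \eqref{eq:3p1} to reduce $[\mathcal W_1, \mathcal W_{-k}]$ to a combination of $\mathcal G_{k+1}, \mathcal {\tilde G}_{k+1}$, using \eqref{eq:3p4} to kill $[\mathcal W_1, \mathcal W_k]$, and finally using \eqref{eq:3p3} to replace $[\mathcal W_1, \mathcal {\tilde G}_{k+1}]$ by $-[\mathcal W_1, \mathcal G_{k+1}]$; the result collapses to $4[\mathcal W_1, \mathcal G_{k+1}]$. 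Relation \eqref{eq:G6} is handled analogously. This produces a Lie algebra homomorphism $\psi \colon \mathcal O \to \mathcal O^\vee$ with $\mathcal W_0 \mapsto \mathcal W_0$, $\mathcal W_1 \mapsto \mathcal W_1$, $\mathcal G_{k+1} \mapsto \mathcal G_{k+1}$.

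Finally, I would check that $\phi$ and $\psi$ are mutually inverse. The composition $\phi \circ \psi$ agrees with the identity on the generators $\mathcal W_0, \mathcal W_1, \{\mathcal G_{k+1}\}$ of $\mathcal O$ from Lemma \ref{lem:Compacta}, so $\phi \circ \psi = \mathrm{id}_{\mathcal O}$. For $\psi \circ \phi = \mathrm{id}_{\mathcal O^\vee}$, it suffices to show that the image of $\psi$ contains every generator of $\mathcal O^\vee$; this follows by induction using \eqref{eq:3p2}, \eqref{eq:3p3} to produce $\mathcal W_{-k-1}$ and $\mathcal W_{k+2}$ from lower-index $\mathcal W$'s and the $\mathcal G_{j+1}$'s, followed by \eqref{eq:3p1} in the form $\mathcal {\tilde G}_{k+1} = \mathcal G_{k+1} + 2[\mathcal W_0, \mathcal W_{k+1}]$ to recover the $\mathcal {\tilde G}$ generators. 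The main obstacle is the bookkeeping in the second paragraph, specifically the derivation of \eqref{eq:G5}, \eqref{eq:G6} from the long relations, which requires a double application of $\mathrm{ad}(\mathcal W_1)$ or $\mathrm{ad}(\mathcal W_0)$ and careful cancellation of the unwanted $\mathcal {\tilde G}$ terms via the paired identities in \eqref{eq:3p2}, \eqref{eq:3p3}.
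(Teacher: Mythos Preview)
Your argument is correct, and it takes a genuinely different route from the paper's proof. The paper also constructs the surjection $\natural\colon \mathcal O^\vee \to \mathcal O$ (your $\phi$), but instead of building an inverse map it proves injectivity directly: it defines the central elements $\mathcal Z^\vee_{k+1}=(\mathcal G_{k+1}+\mathcal{\tilde G}_{k+1})/2$ in $\mathcal O^\vee$, shows by induction that $\mathcal O^\vee = O^\vee + \mathcal Z^\vee$ where $O^\vee$ is the Lie subalgebra generated by $\mathcal W_0,\mathcal W_1$, and then splits the kernel of $\natural$ along the direct sum $\mathcal O = O + \mathcal Z$; the $O^\vee$-piece is handled via the Dolan/Grady presentation of $O$ (Lemma~\ref{def:OA}), and the $\mathcal Z^\vee$-piece via the basis of $\mathcal Z$ from Lemma~\ref{lem:ZZZACE}(ii). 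Your approach instead uses the compact presentation of $\mathcal O$ from Lemma~\ref{lem:Compacta} as a black box to define $\psi$ in the opposite direction, reducing the work to verifying \eqref{eq:G1}--\eqref{eq:G7} inside $\mathcal O^\vee$ and checking surjectivity of $\psi$. Your route is shorter precisely because Lemma~\ref{lem:Compacta} has already absorbed the Dolan/Grady argument; the paper's route is more self-contained and re-exposes the $O+\mathcal Z$ structure of $\mathcal O^\vee$, which is of independent interest. The bookkeeping you flag for \eqref{eq:G5}, \eqref{eq:G6} does go through exactly as you describe: after one application of \eqref{eq:3p2} or \eqref{eq:3p3} and two of $\mathrm{ad}(\mathcal W_1)$ or $\mathrm{ad}(\mathcal W_0)$, the paired identities $[\mathcal W_1,\mathcal{\tilde G}_{k+1}]=-[\mathcal W_1,\mathcal G_{k+1}]$ and $[\mathcal{\tilde G}_{k+1},\mathcal W_0]=-[\mathcal G_{k+1},\mathcal W_0]$ collapse the expression to the desired form.
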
 
\begin{proof} Define a Lie algebra $\mathcal O^\vee$ by the presentation in the lemma statement. We show that $\mathcal O^\vee$ is isomorphic to $\mathcal O$.
By matrix multiplication, the matrices \eqref{eq:WWACE}--\eqref{eq:tGACE} satisfy
the relations \eqref{eq:3p1}--\eqref{eq:3p11}. Therefore, there exists a Lie algebra homomorphism $\natural : \mathcal O^\vee \to \mathcal O$ that sends
\begin{align*}
\mathcal W_{-k} \mapsto \mathcal W_{-k}, \qquad \quad
\mathcal W_{k+1} \mapsto \mathcal W_{k+1}, \qquad \quad
\mathcal G_{k+1} \mapsto \mathcal G_{k+1}, \qquad \quad
\mathcal {\tilde G}_{k+1} \mapsto \mathcal {\tilde G}_{k+1}
\end{align*}
for $k \in \mathbb N$. We show that $\natural$ is a bijection.  By Lemma \ref{lem:OA3} the map $\natural$ is surjective.  We show that $\natural $ is injective.
Let $O^\vee$ denote the Lie subalgebra of $\mathcal O^\vee$
generated by $\mathcal W_0, \mathcal W_1$.
For $k \in \mathbb N$ define $\mathcal Z^\vee_{k+1} \in \mathcal O^\vee$ by $\mathcal Z^\vee_{k+1} = (\mathcal G_{k+1} + \mathcal {\tilde G}_{k+1})/2$.
By the relation on the left in \eqref{eq:3p2}, we obtain $\lbrack \mathcal Z^\vee_{k+1}, \mathcal W_0 \rbrack=0$ for $k \in \mathbb N$.
By the relation on the left in \eqref{eq:3p3},  we obtain $\lbrack \mathcal Z^\vee_{k+1}, \mathcal W_1 \rbrack=0$ for $k \in \mathbb N$.
By the relations \eqref{eq:3p10}, \eqref{eq:3p11} we obtain $\lbrack \mathcal Z^\vee_{k+1}, \mathcal Z^\vee_{\ell+1} \rbrack=0$ for $k, \ell \in \mathbb N$.
Let $\mathcal Z^\vee$ denote the subspace of $\mathcal O^\vee$ spanned by $\lbrace \mathcal Z^\vee_{k+1}\rbrace_{k\in \mathbb N}$. By construction, the subspace 
$O^\vee + \mathcal Z^\vee$ is a Lie subalgebra of $\mathcal O^\vee$ whose center contains $\mathcal Z^\vee$.
 We now show that 
$O^\vee + \mathcal Z^\vee = \mathcal O^\vee$. To this end, we show that the generators \eqref{eq:4gens} of $\mathcal O^\vee$ are contained in 
$O^\vee + \mathcal Z^\vee$. This is established by induction on the ordering
\begin{align*}
\mathcal W_0, \quad \mathcal W_1, \quad \mathcal G_1, \quad \mathcal {\tilde G}_1, \quad 
\mathcal W_{-1}, \quad \mathcal W_2, \quad \mathcal G_2, \quad \mathcal {\tilde G}_2, \quad 
\mathcal W_{-2}, \quad \mathcal W_3, \quad 
\ldots
\end{align*} using the following equations, which come from \eqref{eq:3p1}--\eqref{eq:3p3}. For $n\geq 1$,
\begin{align*}
\mathcal G_n &=\mathcal Z^\vee_n + \lbrack \mathcal W_n, \mathcal W_0 \rbrack, &
\mathcal {\tilde G}_n &=\mathcal Z^\vee_n + \lbrack \mathcal W_0, \mathcal W_n \rbrack,
\\
\mathcal W_{-n} &= \mathcal W_n + \lbrack \mathcal {\tilde G}_n, \mathcal W_0 \rbrack/4, &
\mathcal W_{n+1} &= \mathcal W_{1-n} + \lbrack \mathcal W_1, \mathcal {\tilde G}_n \rbrack/4.
\end{align*}
\noindent We have shown that $O^\vee+ \mathcal Z^\vee = \mathcal O^\vee$. Consequently, $\mathcal Z^\vee_{k+1}$ is central in $\mathcal O^\vee$ for $k \in \mathbb N$.
We can now easily show that $\natural$ is injective. Let $K$ denote the kernel of $\natural$.
Recall from Lemma \ref{lem:ZZZ}(iii) the direct sum $\mathcal O = O + \mathcal Z$.
The map $\natural $ sends $O^\vee$ onto $O$. By Lemma  \ref{lem:ZZZACE}(ii), the map $\natural $ sends $\mathcal Z^\vee$ onto $\mathcal Z$.
By these comments $K=K_1+ K_2$, where $K_1$ is the kernel for the restriction of $\natural$ to $O^\vee$, and
$K_2$ is the kernel for the restriction of $\natural$ to $\mathcal Z^\vee$.
We show that $K_1=0$.
Setting $(k,\ell)=(0,1)$ in \eqref{eq:3p1}--\eqref{eq:3p4}, we find that the elements $\mathcal W_0$, $\mathcal W_1$ of $\mathcal O^\vee$
satisfy the Dolan/Grady relations.
 By this and Lemma \ref{def:OA}, there exists a Lie algebra homomorphism $\sharp: O \to \mathcal O^\vee$ that sends $W_0 \mapsto \mathcal W_0$
 and $W_1 \mapsto \mathcal W_1$. The $\sharp$-image of $O$ is equal to $O^\vee$. The composition $\natural \circ \sharp$ is injective, 
 so the restriction of $\natural$ to $O^\vee$ is injective. Therefore $K_1=0$.
 We show that $K_2=0$. The vectors $\lbrace \natural(\mathcal Z^\vee_{k+1}) \rbrace_{k \in \mathbb N}$ form the basis for $\mathcal Z$ given in Lemma \ref{lem:ZZZACE}(ii).
 Therefore the restriction of $\natural$ to $\mathcal Z^\vee$ is injective, so  $K_2=0$.
 By the above comments $K=0$, so $\natural$ is injective. We have shown that $\natural $ is a bijection, and therefore a Lie algebra isomorphism.
\end{proof}

\section{How $O$ and $\mathcal O$ are related to $O_q$ and $\mathcal O_q$}

In this section we describe how $O$ and $\mathcal O$ are related to the $q$-Onsager algebra $ O_q$ and its alternating central extension $\mathcal O_q$.
\medskip

\noindent Fix a nonzero $q \in \mathbb F$ that is not a root of unity. Recall the notation
\begin{align*}
\lbrack n \rbrack_q = \frac{q^n-q^{-n}}{q-q^{-1}} \qquad \qquad n \in \mathbb Z.
\end{align*}
For elements $X, Y$ in any algebra, define their
commutator and $q$-commutator by 
\begin{align*}
\lbrack X, Y \rbrack = XY-YX, \qquad \qquad
\lbrack X, Y \rbrack_q = q XY- q^{-1}YX.
\end{align*}
\noindent Note that 
\begin{align*}
\lbrack X, \lbrack X, \lbrack X, Y\rbrack_q \rbrack_{q^{-1}} \rbrack
= 
X^3Y-\lbrack 3\rbrack_q X^2YX+ 
\lbrack 3\rbrack_q XYX^2 -YX^3.
\end{align*}

\begin{definition} \label{def:U} \rm
(See \cite[Section~2]{bas1}, \cite[Definition~3.9]{qSerre}.)
Define the algebra $O_q$ by generators $W_0$, $W_1$ and relations
\begin{align}
\label{eq:qOns1}
&\lbrack \mathcal W_0, \lbrack W_0, \lbrack W_0, W_1\rbrack_q \rbrack_{q^{-1}} \rbrack =(q^2 - q^{-2})^2 \lbrack W_1, W_0 \rbrack,
\\
\label{eq:qOns2}
&\lbrack W_1, \lbrack W_1, \lbrack W_1, W_0\rbrack_q \rbrack_{q^{-1}}\rbrack = (q^2-q^{-2})^2 \lbrack W_0, W_1 \rbrack.
\end{align}
We call $O_q$ the {\it $q$-Onsager algebra}.
The relations \eqref{eq:qOns1}, \eqref{eq:qOns2}  are called the {\it $q$-Dolan/Grady relations}.
\end{definition}
\noindent We mention some symmetries of $O_q$. 

\begin{lemma}
\label{lem:aut} There exists an automorphism $\sigma$ of $O_q$ that sends $W_0 \leftrightarrow W_1$.
Moreover $\sigma^2 = {\rm id}$.
\end{lemma}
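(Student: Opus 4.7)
The plan is to use the universal property of the presentation of $O_q$ by generators and relations. Since $O_q$ is defined by the generators $W_0, W_1$ subject to the two $q$-Dolan/Grady relations \eqref{eq:qOns1}, \eqref{eq:qOns2}, in order to produce an algebra homomorphism $\sigma: O_q \to O_q$ with $\sigma(W_0)=W_1$ and $\sigma(W_1)=W_0$, it suffices to verify that the images $W_1, W_0$ satisfy the same defining relations.

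First I would observe the key symmetry of the defining relations: swapping the roles of $W_0$ and $W_1$ carries \eqref{eq:qOns1} to \eqref{eq:qOns2} and vice versa. Indeed, substituting $W_0 \mapsto W_1$ and $W_1 \mapsto W_0$ in \eqref{eq:qOns1} yields exactly \eqref{eq:qOns2}, and similarly in the other direction. Consequently, the elements $W_1, W_0$ (in place of $W_0, W_1$) satisfy both $q$-Dolan/Grady relations in $O_q$, so by the universal property there exists an algebra homomorphism $\sigma: O_q \to O_q$ with $\sigma(W_0)=W_1$ and $\sigma(W_1)=W_0$.

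Next I would establish the involution property. By construction $\sigma^2$ fixes each of the generators $W_0, W_1$. Since $W_0, W_1$ generate $O_q$ and $\sigma^2$ is an algebra homomorphism agreeing with the identity map on a generating set, we conclude $\sigma^2 = \mathrm{id}$. In particular $\sigma$ is its own inverse, so $\sigma$ is bijective and hence an automorphism, completing the proof.

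The argument has no real obstacle; the only thing to check carefully is the symmetry between \eqref{eq:qOns1} and \eqref{eq:qOns2}, which is immediate from inspection since the two relations are obtained from one another by the swap $W_0 \leftrightarrow W_1$.
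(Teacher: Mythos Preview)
Your argument is correct and is exactly the standard one: use the universal property of the presentation, observe that swapping $W_0\leftrightarrow W_1$ interchanges the two $q$-Dolan/Grady relations, and then note $\sigma^2$ fixes the generators. The paper in fact states this lemma without proof, treating it as evident from the manifest symmetry of the defining relations, so your write-up supplies precisely the routine verification the paper omits.
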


\begin{lemma}\label{lem:antiaut} {\rm (See \cite[Lemma~2.5]{z2z2z2}.)}
There exists an antiautomorphism $\dagger$ of $O_q$ that fixes each of $W_0$, $W_1$.
 Moreover $\dagger^2={\rm id}$.
\end{lemma}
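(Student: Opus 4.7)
The plan is to construct $\dagger$ from the universal property of the presentation of $O_q$ given in Definition \ref{def:U}. Let $O_q^{\rm op}$ denote the opposite algebra of $O_q$ (same vector space and identity, but with multiplication reversed). An antiautomorphism of $O_q$ fixing $W_0,W_1$ is the same data as an algebra homomorphism $\dagger : O_q \to O_q^{\rm op}$ sending $W_0 \mapsto W_0$ and $W_1 \mapsto W_1$. By the universal property of the presentation, such a homomorphism exists if and only if the images $W_0, W_1$ in $O_q^{\rm op}$ satisfy the $q$-Dolan/Grady relations \eqref{eq:qOns1}, \eqref{eq:qOns2}.

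To check this, I would use the expanded form stated just before Definition \ref{def:U}:
\begin{align*}
\lbrack X, \lbrack X, \lbrack X, Y\rbrack_q \rbrack_{q^{-1}} \rbrack
= X^3 Y - \lbrack 3\rbrack_q X^2 Y X + \lbrack 3\rbrack_q X Y X^2 - Y X^3.
\end{align*}
Reversing the order of every product on the right-hand side yields
\begin{align*}
Y X^3 - \lbrack 3\rbrack_q X Y X^2 + \lbrack 3\rbrack_q X^2 Y X - X^3 Y,
\end{align*}
which is the negative of the original expression. Similarly, reversal sends $\lbrack X, Y \rbrack$ to $\lbrack Y, X \rbrack = -\lbrack X, Y\rbrack$. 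Therefore reversal of multiplication negates both sides of \eqref{eq:qOns1} (and of \eqref{eq:qOns2}) simultaneously, so the relations continue to hold in $O_q^{\rm op}$. This produces the desired antihomomorphism $\dagger : O_q \to O_q$ fixing $W_0$ and $W_1$.

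It remains to verify $\dagger^2 = {\rm id}$; this is immediate, since $\dagger^2$ is an algebra homomorphism from $O_q$ to itself that fixes the generating set $\{W_0, W_1\}$, and hence equals the identity by the universal property. In particular $\dagger$ is bijective, so it is a genuine antiautomorphism. The only step requiring any verification is the sign computation above, which is routine; there is no real obstacle.
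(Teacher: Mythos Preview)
Your argument is correct and is the standard way to establish this result. The paper itself does not supply a proof of Lemma~\ref{lem:antiaut}; it simply cites \cite[Lemma~2.5]{z2z2z2}, where the same universal-property argument is carried out. So your approach agrees with the intended one.
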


\begin{lemma} {\rm (See \cite[Lemma~3.5]{compQons}.)} The maps $\sigma$, $\dagger$ commute.
\end{lemma}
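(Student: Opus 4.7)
The plan is to exploit the fact that $O_q$ is generated by $W_0, W_1$ (as an algebra) together with the composition rules for automorphisms and antiautomorphisms, reducing the verification to a check on the two generators.

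First I would observe that the composition of an automorphism with an antiautomorphism is again an antiautomorphism. Hence both $\sigma \circ \dagger$ and $\dagger \circ \sigma$ are antiautomorphisms of $O_q$. Since any antiautomorphism $\alpha$ satisfies $\alpha(XY)=\alpha(Y)\alpha(X)$, two antiautomorphisms that agree on an algebra generating set agree on the whole algebra. Therefore it suffices to check equality on the generators $W_0, W_1$.

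Next I would compute directly, using Lemmas \ref{lem:aut} and \ref{lem:antiaut}. On the one hand,
\begin{align*}
(\sigma \circ \dagger)(W_0) &= \sigma(W_0) = W_1, & (\dagger \circ \sigma)(W_0) &= \dagger(W_1) = W_1, \\
(\sigma \circ \dagger)(W_1) &= \sigma(W_1) = W_0, & (\dagger \circ \sigma)(W_1) &= \dagger(W_0) = W_0.
\end{align*}
Thus $\sigma \circ \dagger$ and $\dagger \circ \sigma$ agree on $\{W_0, W_1\}$, and by the generating argument above they coincide on all of $O_q$.

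There is no real obstacle here; the only subtlety is the routine but essential point that composing an automorphism with an antiautomorphism yields an antiautomorphism (so that one needs only to check on generators rather than on an arbitrary spanning set), and this is immediate from the definitions. The existence of $\sigma$ and $\dagger$ is guaranteed by the preceding two lemmas, so no additional verification of well-definedness is required.
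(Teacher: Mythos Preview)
Your proof is correct and is the standard argument: both $\sigma\circ\dagger$ and $\dagger\circ\sigma$ are antiautomorphisms, so it suffices to check agreement on the generating set $\{W_0,W_1\}$, which you do. The paper itself does not supply a proof of this lemma but merely cites \cite[Lemma~3.5]{compQons}; your argument is exactly the expected one and would serve as a self-contained proof here.
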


\begin{remark}\label{rem:limit}\rm
We recover the Onsager algebra $O$ from $O_q$ by taking a limit $q\mapsto 1$. To keep things simple, assume that
$\mathbb F=\mathbb C$. In \eqref{eq:qOns1}, \eqref{eq:qOns2} make a change of variables $W_0 = \xi W'_0$ and $W_1 = \xi W'_1$ with $\xi = \sqrt{-1}(q-q^{-1})$. Simplify and set $q=1$ to obtain
the Dolan/Grady relations  \eqref{eq:DG}.
\end{remark}

\noindent
In Lemma \ref{def:ons} we gave a basis for $O$. We now review the analogous result for $O_q$.
In \cite[Theorem~4.5]{BK}, Baseilhac and Kolb obtain a PBW basis for $O_q$ that involves some elements
\begin{align}
\lbrace B_{n \delta+ \alpha_0} \rbrace_{n\in \mathbb N},
\qquad \quad 
\lbrace B_{n \delta+ \alpha_1} \rbrace_{n\in \mathbb N},
\qquad \quad 
\lbrace B_{n \delta} \rbrace_{n\in \mathbb N^+}.
\label{eq:Upbw}
\end{align}
These elements are recursively defined  as follows. Writing $B_\delta  = q^{-2}W_1 W_0 - W_0 W_1$ we have
\begin{align}
&B_{\alpha_0}=W_0,  \qquad \qquad 
B_{\delta+\alpha_0} = W_1 + 
\frac{q \lbrack B_{\delta}, W_0\rbrack}{(q-q^{-1})(q^2-q^{-2})},
\label{eq:line1a}
\\
&
B_{n \delta+\alpha_0} = B_{(n-2)\delta+\alpha_0}
+ 
\frac{q \lbrack B_{\delta}, B_{(n-1)\delta+\alpha_0}\rbrack}{(q-q^{-1})(q^2-q^{-2})} \qquad \qquad n\geq 2
\label{eq:line2a}
\end{align}
and 
\begin{align}
&B_{\alpha_1}=W_1,  \qquad \qquad 
B_{\delta+\alpha_1} = W_0 - 
\frac{q \lbrack B_{\delta}, W_1\rbrack}{(q-q^{-1})(q^2-q^{-2})},
\label{eq:line3a}
\\
&
B_{n \delta+\alpha_1} = B_{(n-2)\delta+\alpha_1}
- 
\frac{q \lbrack B_{\delta}, B_{(n-1)\delta+\alpha_1}\rbrack}{(q-q^{-1})(q^2-q^{-2})} \qquad \qquad n\geq 2.
\label{eq:line4a}
\end{align}
Moreover for $n\geq 2$,
\begin{equation}
\label{eq:Bdeltaa}
B_{n \delta} = 
q^{-2}  B_{(n-1)\delta+\alpha_1} W_0
- W_0 B_{(n-1)\delta+\alpha_1}  + 
(q^{-2}-1)\sum_{\ell=0}^{n-2} B_{\ell \delta+\alpha_1}
B_{(n-\ell-2) \delta+\alpha_1}.
\end{equation}
By \cite[Proposition~5.12]{BK} the elements $\lbrace B_{n\delta}\rbrace_{n\in \mathbb N^+}$ mutually commute. Additional relations involving the elements \eqref{eq:Upbw} can be found in \cite{BK}; see also
\cite[Section~2.2]{LuWang}.
\medskip

\noindent Referring to Remark \ref{rem:limit}, we now give the limiting values of the elements \eqref{eq:Upbw}. In  \eqref{eq:line1a}--\eqref{eq:Bdeltaa} and the expression for  $B_{\delta}$ below \eqref{eq:Upbw}, make a change of variables
\begin{align*}
B_{n\delta+\alpha_0} = \xi A_{-n}, \qquad \quad B_{n\delta+\alpha_1} = \xi A_{n+1},
\qquad \quad B_{m\delta} = 2\xi^2 B_m
\end{align*}
 for $n\geq 0 $ and $m\geq 1$. Simplify and set $q=1$ to obtain
 \begin{align*}
 \lbrack B_1, A_n \rbrack = A_{n+1} -A_{n-1}, \qquad \qquad 
 \lbrack A_m, A_0\rbrack = 2B_m 
 \end{align*}
 \noindent for $n \in \mathbb Z$ and $m\geq 1$.
 \noindent The elements $\lbrace A_k \rbrace_{k \in \mathbb Z}$, $\lbrace B_k \rbrace_{k \in \mathbb N^+}$ form the basis for $O$ given in Lemma \ref{def:ons}.
\medskip

\noindent In Definition
\ref{def:WWGG} we defined some elements
$\lbrace W_{-k}\rbrace_{k \in \mathbb N}$, $\lbrace W_{k+1}\rbrace_{k \in \mathbb N}$, $ \lbrace {\tilde G}_{k+1}\rbrace_{k \in \mathbb N}$
in $O$. These elements got used in 
 Lemma  \ref{lem:OA2} to obtain a basis for $O$, and in Lemma
\ref{lem:Olong} to obtain a presentation of $O$ by generators and relations. In \cite{conj} we investigated the analogous elements for $O_q$; our main result in this direction is
\cite[Conjecture~6.2]{conj}.
\medskip

\noindent By Lemma \ref{lem:centerO} the center of $O$ is zero. By 
\cite[Theorem~8.3]{kolb} 
the center of $O_q$ is $\mathbb F 1$.
\medskip

\noindent We now recall the algebra $\mathcal O_q$.

\begin{definition}\rm
\label{def:Aq}
(See 
\cite{BK05}, \cite[Definition~3.1]{basnc}.)
Define the algebra $\mathcal O_q$
by generators
\begin{align}
\label{eq:4gensa}
\lbrace \mathcal W_{-k}\rbrace_{n\in \mathbb N}, \qquad  \lbrace \mathcal  W_{k+1}\rbrace_{n\in \mathbb N},\qquad  
 \lbrace \mathcal G_{k+1}\rbrace_{n\in \mathbb N},
\qquad
\lbrace \mathcal {\tilde G}_{k+1}\rbrace_{n\in \mathbb N}
\end{align}
 and the following relations. For $k, \ell \in \mathbb N$,
\begin{align}
&
 \lbrack \mathcal W_0, \mathcal W_{k+1}\rbrack= 
\lbrack \mathcal W_{-k}, \mathcal W_{1}\rbrack=
({\mathcal{\tilde G}}_{k+1} - \mathcal G_{k+1})/(q+q^{-1}),
\label{eq:3p1a}
\\
&
\lbrack \mathcal W_0, \mathcal G_{k+1}\rbrack_q= 
\lbrack {\mathcal{\tilde G}}_{k+1}, \mathcal W_{0}\rbrack_q= 
\rho  \mathcal W_{-k-1}-\rho 
 \mathcal W_{k+1},
\label{eq:3p2a}
\\
&
\lbrack \mathcal G_{k+1}, \mathcal W_{1}\rbrack_q= 
\lbrack \mathcal W_{1}, {\mathcal {\tilde G}}_{k+1}\rbrack_q= 
\rho  \mathcal W_{k+2}-\rho 
 \mathcal W_{-k},
\label{eq:3p3a}
\\
&
\lbrack \mathcal W_{-k}, \mathcal W_{-\ell}\rbrack=0,  \qquad 
\lbrack \mathcal W_{k+1}, \mathcal W_{\ell+1}\rbrack= 0,
\label{eq:3p4a}
\\
&
\lbrack \mathcal W_{-k}, \mathcal W_{\ell+1}\rbrack+
\lbrack \mathcal W_{k+1}, \mathcal W_{-\ell}\rbrack= 0,
\label{eq:3p5a}
\\
&
\lbrack \mathcal W_{-k}, \mathcal G_{\ell+1}\rbrack+
\lbrack \mathcal G_{k+1}, \mathcal W_{-\ell}\rbrack= 0,
\label{eq:3p6a}
\\
&
\lbrack \mathcal W_{-k}, {\mathcal {\tilde G}}_{\ell+1}\rbrack+
\lbrack {\mathcal {\tilde G}}_{k+1}, \mathcal W_{-\ell}\rbrack= 0,
\label{eq:3p7a}
\\
&
\lbrack \mathcal W_{k+1}, \mathcal G_{\ell+1}\rbrack+
\lbrack \mathcal  G_{k+1}, \mathcal W_{\ell+1}\rbrack= 0,
\label{eq:3p8a}
\\
&
\lbrack \mathcal W_{k+1}, {\mathcal {\tilde G}}_{\ell+1}\rbrack+
\lbrack {\mathcal {\tilde G}}_{k+1}, \mathcal W_{\ell+1}\rbrack= 0,
\label{eq:3p9a}
\\
&
\lbrack \mathcal G_{k+1}, \mathcal G_{\ell+1}\rbrack=0,
\qquad 
\lbrack {\mathcal {\tilde G}}_{k+1}, {\mathcal {\tilde G}}_{\ell+1}\rbrack= 0,
\label{eq:3p10a}
\\
&
\lbrack {\mathcal {\tilde G}}_{k+1}, \mathcal G_{\ell+1}\rbrack+
\lbrack \mathcal G_{k+1}, {\mathcal {\tilde G}}_{\ell+1}\rbrack= 0.
\label{eq:3p11a}
\end{align}
In the above equations $\rho = -(q^2-q^{-2})^2$. Following \cite{pbwqO} we call $\mathcal O_q$ the {\it alternating central
extension of $O_q$}.
\end{definition}
\noindent Next we describe some symmetries of $\mathcal O_q$.
\begin{lemma}
\label{lem:autA} {\rm (See \cite[Remark~1]{basBel}.)} There exists an automorphism $\sigma$ of $\mathcal O_q$ that sends
\begin{align*}
\mathcal W_{-k} \mapsto \mathcal W_{k+1}, \qquad
\mathcal W_{k+1} \mapsto \mathcal W_{-k}, \qquad
\mathcal G_{k+1} \mapsto \mathcal {\tilde G}_{k+1}, \qquad
\mathcal {\tilde G}_{k+1} \mapsto \mathcal G_{k+1}
\end{align*}
 for $k \in \mathbb N$. Moreover $\sigma^2 = {\rm id}$.
\end{lemma}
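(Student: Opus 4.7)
The plan is to exploit the presentation of $\mathcal O_q$ by generators and relations given in Definition \ref{def:Aq}. Since $\mathcal O_q$ is freely generated by the symbols in \eqref{eq:4gensa} modulo the relations \eqref{eq:3p1a}--\eqref{eq:3p11a}, it suffices to show that when we replace each generator by its prescribed $\sigma$-image, the resulting expressions still satisfy all eleven families of relations. This yields an algebra endomorphism $\sigma:\mathcal O_q\to \mathcal O_q$ by the universal property, and then $\sigma^2={\rm id}$ on generators forces $\sigma^2={\rm id}$ on $\mathcal O_q$, which automatically makes $\sigma$ bijective.

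The verification proceeds relation by relation, and I expect each one to either transform into itself (possibly after multiplying by $-1$) or swap with a partner relation in the list. For instance, applying the substitution $\mathcal W_{-k}\leftrightarrow \mathcal W_{k+1}$, $\mathcal G_{k+1}\leftrightarrow \mathcal{\tilde G}_{k+1}$ to \eqref{eq:3p1a} gives $[\mathcal W_1,\mathcal W_{-k}]=[\mathcal W_{k+1},\mathcal W_0]=(\mathcal G_{k+1}-\mathcal{\tilde G}_{k+1})/(q+q^{-1})$, which is $-1$ times \eqref{eq:3p1a} and hence holds. Similarly, \eqref{eq:3p2a} and \eqref{eq:3p3a} are swapped with each other under the substitution (using that the $q$-commutator $[\,,\,]_q$ is preserved when both arguments are swapped and sign-adjusted appropriately via $[X,Y]_q=-[Y,X]_{q^{-1}}$, but here the images already match the stated $q$-commutator form directly). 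The relations \eqref{eq:3p4a}, \eqref{eq:3p10a} swap their two halves; \eqref{eq:3p5a}, \eqref{eq:3p11a} are mapped to themselves after relabeling; \eqref{eq:3p6a} swaps with \eqref{eq:3p9a} and \eqref{eq:3p7a} swaps with \eqref{eq:3p8a}.

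The main obstacle, such as it is, lies in \eqref{eq:3p2a}--\eqref{eq:3p3a}, where one must be careful with the $q$-commutator convention. The listed pairs of equalities in those lines are exactly what is needed: the left $q$-commutator of \eqref{eq:3p2a} maps to the right $q$-commutator of \eqref{eq:3p3a} and vice versa, and the two right-hand sides $\rho\mathcal W_{-k-1}-\rho\mathcal W_{k+1}$ and $\rho\mathcal W_{k+2}-\rho\mathcal W_{-k}$ are interchanged (with the index $k$ unchanged). No sign issue arises because the scalar $\rho$ is not touched and the $q$-parameter in the bracket is preserved by the substitution.

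Once all eleven relations are checked, we conclude there is a unique algebra homomorphism $\sigma:\mathcal O_q\to \mathcal O_q$ with the stated action on generators. Applying $\sigma$ twice returns each generator in \eqref{eq:4gensa} to itself, so $\sigma^2={\rm id}$ holds on a generating set and therefore on all of $\mathcal O_q$. In particular $\sigma$ is invertible, hence an automorphism, completing the proof.
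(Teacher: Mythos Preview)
Your proof is correct and is precisely the standard argument one would give: verify that the proposed substitution permutes the defining relations \eqref{eq:3p1a}--\eqref{eq:3p11a} among themselves (up to sign), invoke the universal property to get a homomorphism, and observe $\sigma^2={\rm id}$ on generators. The paper itself does not supply a proof of this lemma but simply cites \cite[Remark~1]{basBel}, so there is nothing to compare against; your argument fills in what the paper leaves to the reference.
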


\begin{lemma}\label{lem:antiautA} {\rm (See \cite[Lemma~3.7]{z2z2z2}.)} There exists an antiautomorphism $\dagger$ of $\mathcal O_q$ that sends
\begin{align*}
\mathcal W_{-k} \mapsto \mathcal W_{-k}, \qquad
\mathcal W_{k+1} \mapsto \mathcal W_{k+1}, \qquad
\mathcal G_{k+1} \mapsto \mathcal {\tilde G}_{k+1}, \qquad
\mathcal {\tilde G}_{k+1} \mapsto \mathcal G_{k+1}
\end{align*}
for $k \in \mathbb N$. Moreover $\dagger^2={\rm id}$.
\end{lemma}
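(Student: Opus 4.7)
The plan is to use the presentation of $\mathcal{O}_q$ given in Definition \ref{def:Aq}. Recall that an antiautomorphism of $\mathcal{O}_q$ is the same as an algebra homomorphism $\mathcal{O}_q \to \mathcal{O}_q^{\mathrm{op}}$. I would define a map on the generating set \eqref{eq:4gensa} by the formulas in the lemma statement, and verify that the images satisfy the defining relations \eqref{eq:3p1a}--\eqref{eq:3p11a} when every product is reversed. By the universal property of the presentation, this gives a well-defined algebra antihomomorphism $\dagger: \mathcal{O}_q \to \mathcal{O}_q$. Once we observe that $\dagger^2$ acts as the identity on each generator, it follows that $\dagger^2 = \mathrm{id}$ globally, and in particular $\dagger$ is bijective.

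The key computational facts are that for any antiautomorphism $\dagger$, we have $\dagger([X,Y]) = [\dagger(Y), \dagger(X)] = -[\dagger(X), \dagger(Y)]$ and $\dagger([X,Y]_q) = [\dagger(Y), \dagger(X)]_q$. Using these, I would walk through the relations and show that each one maps under $\dagger$ to a relation already in the presentation. For instance, applying $\dagger$ to \eqref{eq:3p1a} produces $[\mathcal{W}_{k+1}, \mathcal{W}_0] = [\mathcal{W}_1, \mathcal{W}_{-k}] = (\mathcal{G}_{k+1} - \tilde{\mathcal{G}}_{k+1})/(q+q^{-1})$, which is the negation of \eqref{eq:3p1a}. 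Applying $\dagger$ to \eqref{eq:3p2a} yields $[\tilde{\mathcal{G}}_{k+1}, \mathcal{W}_0]_q = [\mathcal{W}_0, \mathcal{G}_{k+1}]_q = \rho \mathcal{W}_{-k-1} - \rho \mathcal{W}_{k+1}$, which is exactly \eqref{eq:3p2a} with its two equal pieces interchanged; similarly \eqref{eq:3p3a} maps to itself. The pair \eqref{eq:3p6a}, \eqref{eq:3p7a} gets swapped under $\dagger$ because $\dagger$ interchanges $\mathcal{G}_{k+1}$ and $\tilde{\mathcal{G}}_{k+1}$; likewise the pair \eqref{eq:3p8a}, \eqref{eq:3p9a} gets swapped. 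The remaining relations \eqref{eq:3p4a}, \eqref{eq:3p5a}, \eqref{eq:3p10a}, \eqref{eq:3p11a} are either symmetric or antisymmetric in form, and $\dagger$ carries each to a scalar multiple of itself.

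The main obstacle is simply the bookkeeping: each of the eleven relations must be checked, and in each case one must identify which relation in the presentation the image corresponds to. The pattern is clean because the relations in Definition \ref{def:Aq} were organized by Baseilhac and Shigechi in a way that pairs up naturally under the $\mathcal{G} \leftrightarrow \tilde{\mathcal{G}}$ exchange, so the verification is routine once the pairing is recognized. A minor but noteworthy point is that $[X,Y]_q$ is preserved (not sent to $[X,Y]_{q^{-1}}$) under $\dagger$ because the antiautomorphism reverses the order of the two factors, restoring the $q$-grading; this is what makes the $q$-commutator relations \eqref{eq:3p2a}, \eqref{eq:3p3a} symmetric under $\dagger$ without any inversion of $q$. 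Finally, since $\dagger$ visibly squares to the identity on each generator in \eqref{eq:4gensa}, and these generators generate $\mathcal{O}_q$, we conclude $\dagger^2 = \mathrm{id}$.
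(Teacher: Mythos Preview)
Your approach is correct and is the standard one: verify that the proposed images of the generators satisfy the defining relations \eqref{eq:3p1a}--\eqref{eq:3p11a} read in the opposite algebra, invoke the universal property of the presentation to get an antihomomorphism, and then observe that it squares to the identity on generators. The paper does not give its own proof of this lemma but simply cites \cite[Lemma~3.7]{z2z2z2}; your direct verification is exactly how such a statement is established and is presumably what the cited reference does.
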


\begin{lemma} \label{lem:sdcomA} {\rm (See \cite[Lemma~4.6]{compQons}.)} The maps $\sigma$, $\dagger $ commute.
\end{lemma}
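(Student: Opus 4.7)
The plan is to reduce the claim to a finite bookkeeping check on a generating set. Both $\sigma\circ\dagger$ and $\dagger\circ\sigma$ are compositions of an automorphism with an antiautomorphism, so each is an antiautomorphism of $\mathcal O_q$. An antiautomorphism is determined by its values on any algebra-generating set; hence it suffices to show that $\sigma\circ\dagger$ and $\dagger\circ\sigma$ agree on the alternating generators \eqref{eq:4gensa}, which generate $\mathcal O_q$ by definition.

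First I would compute $\sigma\circ\dagger$ on each generator, reading off the action from Lemmas \ref{lem:autA} and \ref{lem:antiautA}. For $k\in\mathbb N$ this gives
$\mathcal W_{-k}\mapsto \mathcal W_{-k}\mapsto \mathcal W_{k+1}$,
$\mathcal W_{k+1}\mapsto \mathcal W_{k+1}\mapsto \mathcal W_{-k}$,
$\mathcal G_{k+1}\mapsto \mathcal{\tilde G}_{k+1}\mapsto \mathcal G_{k+1}$, and
$\mathcal{\tilde G}_{k+1}\mapsto \mathcal G_{k+1}\mapsto \mathcal{\tilde G}_{k+1}$.
Next I would compute $\dagger\circ\sigma$ on each generator, obtaining
$\mathcal W_{-k}\mapsto \mathcal W_{k+1}\mapsto \mathcal W_{k+1}$,
$\mathcal W_{k+1}\mapsto \mathcal W_{-k}\mapsto \mathcal W_{-k}$,
$\mathcal G_{k+1}\mapsto \mathcal{\tilde G}_{k+1}\mapsto \mathcal G_{k+1}$, and
$\mathcal{\tilde G}_{k+1}\mapsto \mathcal G_{k+1}\mapsto \mathcal{\tilde G}_{k+1}$.
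Comparing the two tables entry-by-entry confirms that $\sigma\circ\dagger$ and $\dagger\circ\sigma$ coincide on every generator in \eqref{eq:4gensa}, and hence on all of $\mathcal O_q$.

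There is no serious obstacle in this argument. The only conceptual step is the remark that a composition of an automorphism with an antiautomorphism is again an antiautomorphism, which is immediate from the definitions; everything else is a direct substitution against the formulas already recorded in Lemmas \ref{lem:autA} and \ref{lem:antiautA}. One could equivalently quote the analogous commutation for $\sigma,\dagger$ on $O_q$ and lift it, but since the generators of $\mathcal O_q$ come equipped with explicit $\sigma$- and $\dagger$-formulas, the direct computation on generators is the most economical route.
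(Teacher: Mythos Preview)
Your proof is correct. The paper does not supply its own argument here but simply cites \cite[Lemma~4.6]{compQons}; your direct verification on the alternating generators, using that $\sigma\circ\dagger$ and $\dagger\circ\sigma$ are both antiautomorphisms and hence determined on a generating set, is exactly the expected route and matches the computations recorded in Lemmas~\ref{lem:autA} and~\ref{lem:antiautA}.
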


\begin{remark}\label{rem:WWGGlim}\rm We recover $\mathcal O$ from $\mathcal O_q$ by taking a limit $q\mapsto 1$. Assume that
$\mathbb F=\mathbb C$, and define $\xi = \sqrt{-1} (q-q^{-1})$.
In \eqref{eq:3p1a}--\eqref{eq:3p11a}, make a change of variables
\begin{align*}
\mathcal W_{-k} =\xi \mathcal W'_{-k}, \qquad
\mathcal W_{k+1} =\xi \mathcal W'_{k+1}, \qquad
\mathcal G_{k+1} =\xi^2 \mathcal G'_{k+1}, \qquad
\mathcal {\tilde G}_{k+1} =\xi^2 \mathcal {\tilde G}'_{k+1}
\end{align*}
for $k \in \mathbb N$. Simplify and set $q=1$ to obtain the relations
\eqref{eq:3p1}--\eqref{eq:3p11}.
\end{remark}

\noindent We make some more comparisons between $O$, $\mathcal O$ and $O_q$, $\mathcal O_q$.
By Definition \ref{def:onsager}, $O$ is a certain Lie subalgebra of $L(\mathfrak{sl}_2)$. Analogously, the algebra $O_q$ is a
 coideal subalgebra of the quantized enveloping algebra
$U_q(\widehat{\mathfrak{sl}}_2)$    \cite[Eqn. (1.2), Proposition~2.2]{bas8}; see also \cite[Example~7.6]{kolb}. 
By Definition \ref{def:OO}, $\mathcal O$ is a certain Lie subalgebra of $\mathcal L$, which in turn is
an ideal in $L(\mathfrak{gl}_2)$. Continuing the analogy, we expect that $\mathcal O_q$ is a coideal subalgebra of
$U_q(\widehat{\mathfrak{gl}}_2)$, but to date there are no results about this. However see \cite[Proposition~5.18]{FMA} for
a related result in which $\mathcal O_q$ is replaced by $\mathcal U^+_q$.
\medskip

\noindent
In Lemma \ref{def:onsACE} we gave a basis for $\mathcal O$. It is an open problem to find an analogous PBW basis for $\mathcal O_q$.
\medskip

\noindent
Recall the center $\mathcal Z$ of $\mathcal O$.
In Lemma  \ref{lem:ZZZ}(iii) we obtained the direct sum $\mathcal O = O + \mathcal Z$. We now describe the analogous result for $\mathcal O_q$.
Let $\mathcal Z_q$ denote the center of $\mathcal O_q$. In \cite[Proposition~8.12]{pbwqO} we obtained some elements $\lbrace \mathcal Z_{k+1} \rbrace_{k\in \mathbb N}$ in $\mathcal Z_q$.
Let $\lbrace z_{k+1} \rbrace_{k\in \mathbb N}$ denote mutually commuting indeterminates. Let $\mathbb F \lbrack z_1, z_2, \ldots \rbrack$ denote
the algebra consisting of the polynomials in $z_1, z_2, \ldots $ that have all coefficients in $\mathbb F$. 
Let $\langle \mathcal W_0, \mathcal W_1\rangle $ denote the subalgebra of $\mathcal O_q$ generated by $\mathcal W_0$, $\mathcal W_1$.
\begin{lemma} \label{lem:sum}
{\rm (See \cite[Theorems~10.2--10.4]{pbwqO}.)} For the algebra $\mathcal O_q$ the following {\rm (i)--(iii)} hold:
\begin{enumerate}
\item[\rm (i)] 
there exists an algebra isomorphism
 $\mathbb F\lbrack z_1, z_2,\ldots \rbrack \to \mathcal Z_q$
 that sends $z_n \mapsto \mathcal Z_n$ for $n \geq 1$;
 \item[\rm (ii)] 
there exists an algebra isomorphism
$O_q \to \langle \mathcal W_0, \mathcal W_1\rangle$ 
that sends $W_0\mapsto \mathcal W_0$ and
$W_1\mapsto \mathcal W_1$;
\item[\rm (iii)] 
the multiplication map 
\begin{align*}
\langle \mathcal W_0, \mathcal W_1\rangle 
\otimes
\mathcal Z_q
 & \to   \mathcal O_q
\\
 w \otimes z  &\mapsto  wz
 \end{align*}
 is an isomorphism of algebras.
 \end{enumerate}
\end{lemma}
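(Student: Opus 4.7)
The plan is to treat the three parts as a package built on a PBW theorem for $\mathcal O_q$, mirroring the strategy already used for $\mathcal O$ in the proof of the last relation-style lemma of Section 4, and for $\mathcal U^+_q$ in \cite{altCE}. First I would establish that the alternating generators \eqref{eq:4gensa}, under a suitable linear order (for instance, the natural order used in the induction argument after \eqref{eq:3p3}), form a PBW basis for $\mathcal O_q$. The standard tool here is the Bergman diamond lemma: one turns the defining relations \eqref{eq:3p1a}--\eqref{eq:3p11a} into a reduction system, checks that all ambiguities resolve, and reads off the irreducible monomials as the PBW basis. This is the analog of \cite[Theorem~6.1]{pbwqO}, and it is the technical heart of the whole argument.

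Next I would produce explicit central elements $\mathcal Z_{k+1}\in \mathcal O_q$ for $k\in \mathbb N$. The natural candidate, suggested by the $q=1$ picture and by Lemma \ref{lem:ZZZACE}(ii), is a $q$-deformation of $(\mathcal G_{k+1}+\mathcal{\tilde G}_{k+1})/2$; in $\mathcal O_q$ the correct combination involves a quadratic tail in the $\mathcal W_j$'s, defined recursively so as to be killed by $\mathrm{ad}(\mathcal W_0)$ and $\mathrm{ad}(\mathcal W_1)$. Centrality is then verified by induction on $k$: one uses \eqref{eq:3p2a}, \eqref{eq:3p3a} to control the brackets with $\mathcal W_0,\mathcal W_1$, and \eqref{eq:3p6a}--\eqref{eq:3p11a} to control the brackets with the remaining alternating generators, noting that $\mathcal W_0,\mathcal W_1$ together with $\lbrace \mathcal Z_{k+1}\rbrace$ generate all of $\mathcal O_q$ (as one sees from the identities displayed just after \eqref{eq:Bdeltaa}-style rearrangement of \eqref{eq:3p1a}--\eqref{eq:3p3a}). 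I expect this step — explicitly constructing the $\mathcal Z_{k+1}$ and proving centrality — to be the main obstacle, exactly as it was in \cite[Proposition~8.12]{pbwqO}.

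For part (ii), the $q$-Dolan/Grady relations \eqref{eq:qOns1}, \eqref{eq:qOns2} at $\mathcal W_0,\mathcal W_1$ follow from the $(k,\ell)=(0,1)$ specializations of \eqref{eq:3p1a}--\eqref{eq:3p4a}, so there is a surjective algebra homomorphism $\sharp\colon O_q\to \langle \mathcal W_0,\mathcal W_1\rangle$ sending $W_0\mapsto \mathcal W_0$, $W_1\mapsto \mathcal W_1$. Injectivity of $\sharp$ will be read off the PBW basis: the Baseilhac--Kolb PBW elements \eqref{eq:Upbw} of $O_q$ map to elements of $\langle \mathcal W_0,\mathcal W_1\rangle$ whose expansions in the PBW basis of $\mathcal O_q$ are linearly independent (modulo the subalgebra generated by $\mathcal Z_{k+1}$), so $\sharp$ takes a basis to a linearly independent set. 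For part (i), algebraic independence of the $\mathcal Z_{k+1}$ is obtained the same way: their leading PBW terms are the monomials $(\mathcal G_{k+1}+\mathcal{\tilde G}_{k+1})/2$ plus strictly lower terms, so monomials in the $\mathcal Z_{k+1}$ are linearly independent, giving an injection $\mathbb F\lbrack z_1,z_2,\ldots\rbrack \hookrightarrow \mathcal Z_q$; surjectivity onto $\mathcal Z_q$ then follows from (iii).

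Finally, for part (iii), once the PBW basis is in hand and the central elements $\mathcal Z_{k+1}$ are known to be algebraically independent, one rewrites the PBW monomials in $\mathcal O_q$ in the form $w \cdot \mathcal Z_1^{n_1}\mathcal Z_2^{n_2}\cdots$ with $w$ a PBW monomial in the alternating generators that contains no $\mathcal G_{k+1}+\mathcal{\tilde G}_{k+1}$ factor, and shows by a triangular change of basis that these products are still a basis. Combined with (ii), this identifies $\mathcal O_q$ with $\langle \mathcal W_0,\mathcal W_1\rangle \otimes \mathcal Z_q$ as vector spaces, and the multiplication map is automatically an algebra homomorphism because everything in $\mathcal Z_q$ is central; hence it is an algebra isomorphism. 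Surjectivity of $\mathbb F\lbrack z_1,z_2,\ldots\rbrack \to \mathcal Z_q$ in (i) is then a direct consequence, since any central element, when written in the $\langle \mathcal W_0,\mathcal W_1\rangle\otimes \mathcal Z_q$ factorization, must lie in the $1\otimes \mathcal Z_q$ component.
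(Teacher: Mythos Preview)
The paper does not prove this lemma at all: it is stated as a quotation of \cite[Theorems~10.2--10.4]{pbwqO}, with no argument given. There is therefore nothing in the present paper to compare your proposal against.

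That said, your outline matches what the introduction of the paper says about the proof in \cite{pbwqO}: the Bergman diamond lemma is used to establish a PBW basis for $\mathcal O_q$ in the alternating generators (\cite[Theorem~6.1]{pbwqO}), explicit central elements $\mathcal Z_n$ are constructed (\cite[Proposition~8.12]{pbwqO}), and the tensor decomposition $\mathcal O_q \cong O_q \otimes \mathbb F\lbrack z_1,z_2,\ldots\rbrack$ is read off (\cite[Theorem~9.14]{pbwqO}). So your plan is consistent with the cited source, but be aware that for the purposes of \emph{this} paper the lemma is simply imported, not proved.
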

\noindent By Lemma \ref{lem:centerFix},
the automorphism $\sigma$ and the antiautomorphism $\dagger$ of $\mathcal O$ fix everything in $\mathcal Z$. By \cite[Lemma 8.10]{pbwqO},
the automorphism $\sigma$ and the antiautomorphism $\dagger$ of $\mathcal O_q$ fix everything in $\mathcal Z_q$.
\medskip

\noindent
By Lemma \ref{lem:Ogen}, the Lie algebra $\mathcal O$ is generated by $\mathcal W_0$, $\mathcal W_1$, $\mathcal Z$.
By  \cite[Corollary~8.23]{pbwqO}, the algebra $\mathcal O_q$ is generated by $\mathcal W_0$, $\mathcal W_1$, $\mathcal Z_q$.
\medskip

\noindent In Lemma  \ref{lem:OA3}, we gave a basis for $\mathcal O$ and the action of the Lie bracket on the basis.
We now describe the corresponding result for $\mathcal O_q$.
 In \cite[Theorem~6.1]{pbwqO} we showed that the generators 
 \eqref{eq:4gensa}
 give a PBW basis for $\mathcal O_q$. In 
 \cite[Proposition~5.1]{pbwqO} we gave the commutation relations satisfied by these generators.
 \medskip
 
\noindent In Lemma  \ref{lem:inj} we described an injective Lie algebra homomorphism $O \to \mathcal O$ that sends $W_0 \mapsto \mathcal W_0$
and $W_1 \mapsto \mathcal W_1$. The corresponding injective algebra homomorphism $O_q \to \mathcal O_q$ is mentioned in Lemma \ref{lem:sum}(ii) above.
In Lemma \ref{lem:surj} we described a surjective Lie algebra homomorphism $\rho: \mathcal O \to O$ that sends $\mathcal W_0 \mapsto W_0$ and $\mathcal W_1 \mapsto W_1$ and $\mathcal Z \mapsto 0$.
By Lemma \ref{lem:sum} there exists a surjective algebra homomorphism $\mathcal O_q \to O_q$ that sends $\mathcal W_0 \mapsto W_0$ and $\mathcal W_1 \mapsto W_1$ and $\mathcal Z_n \mapsto 0$ for $n \geq 1$.
\medskip

\noindent In Lemma  \ref{lem:OG} we gave two direct sum decompositions of $\mathcal O$. The analogous results for $\mathcal O_q$ can be found in
\cite[Proposition~12.2]{compQons} and
\cite[Theorem~1.2]{compQons}.
\medskip

\noindent In Lemma \ref{lem:Compacta} and  Lemma \ref{lem:Compact}, we gave two presentations of $\mathcal O$ by generators and relations. The analogous
presentations of $\mathcal O_q$ can be found in
\cite[Proposition~12.1]{compQons} and
\cite[Theorem~1.1]{compQons}.

\section{Acknowledgements}
The author thanks Pascal Baseilhac for giving this paper a close reading and offering valuable suggestions. 
In recent years, we have had many discussions about the Onsager Lie algebra and related topics.


%

\bigskip

\noindent Paul Terwilliger \hfil\break
\noindent Department of Mathematics \hfil\break
\noindent University of Wisconsin \hfil\break
\noindent 480 Lincoln Drive \hfil\break
\noindent Madison, WI 53706-1388 USA \hfil\break
\noindent email: {\tt terwilli@math.wisc.edu }\hfil\break

\end{document}